\documentclass[a4paper,11pt,oneside]{amsart}
\pdfoutput=1

\usepackage[utf8]{inputenc}
\usepackage{bm}
\usepackage{mathtools,amssymb}
\usepackage{esint}
\usepackage{hyperref}
\hypersetup{
    colorlinks=true,
    linkcolor=black,
    filecolor=black,      
    urlcolor=cyan,
    citecolor=black
    }
\usepackage{tikz}
\usetikzlibrary{arrows.meta}
\usepackage{dsfont}
\usepackage{relsize}
\usepackage{url}
\urlstyle{same}
\usepackage{xcolor}
\usepackage{graphicx}
\usepackage{mathrsfs}
\usepackage[shortlabels]{enumitem}
\usepackage{lineno}
\usepackage{amsmath}
\usepackage{enumitem}
\usepackage{amsthm} 
\usepackage{verbatim}
\usepackage{dsfont}

\allowdisplaybreaks

\mathtoolsset{showonlyrefs}

\graphicspath{{images/}}

\newtheorem{theorem}{Theorem}[section]
\newtheorem{lemma}[theorem]{Lemma}
\newtheorem{definition}[theorem]{Definition}
\newtheorem{corollary}[theorem]{Corollary}
\newtheorem{conjecture}[theorem]{Conjecture}

\newtheorem{proposition}[theorem]{Proposition}
\newtheorem{question}[theorem]{Question}
\newtheorem{remark}[theorem]{Remark}

\title[The inverse fractional conductivity problem]{Low regularity theory for the inverse fractional conductivity problem}
\keywords{Calderón problem, Fractional Laplacian, fractional gradient, conductivity equation}
\subjclass[2020]{Primary 35R30; secondary 26A33, 42B37, 46F12}

\author{Jesse Railo}
\thanks{Department of Pure Mathematics and Mathematical Statistics, University of
Cambridge (\href{mailto:jr891@cam.ac.uk}{jr891@cam.ac.uk})}
\address{Department of Pure Mathematics and Mathematical Statistics, University of
Cambridge, Cambridge CB3 0WB, UK}
\email{jr891@cam.ac.uk}
\author{Philipp Zimmermann}
\thanks{Department of Mathematics, ETH Zurich (\href{mailto:philipp.zimmermann@math.ethz.ch}{philipp.zimmermann@math.ethz.ch})}
\address{Department of Mathematics, ETH Zurich, Z\"urich, Switzerland}
\email{philipp.zimmermann@math.ethz.ch}
\date{\today}

\newcommand{\R}{{\mathbb R}}

\newcommand{\N}{{\mathbb N}}



\newcommand{\schwartz}{\mathscr{S}}

\newcommand{\tempered}{\mathscr{S}^{\prime}}

\newcommand{\fourier}{\mathcal{F}}
\newcommand{\ifourier}{\mathcal{F}^{-1}}
\newcommand{\vev}[1]{\left\langle#1\right\rangle}

\newcommand{\cdistr}{\mathscr{E}'}
\newcommand{\distr}{\mathscr{D}^{\prime}}
\newcommand{\norm}[1]{\lVert #1 \rVert}
\newcommand{\abs}[1]{\left\lvert #1 \right\rvert}
\newcommand{\ip}[2]{\left\langle #1,#2 \right\rangle}
\DeclareMathOperator{\Div}{div} 
\DeclareMathOperator{\supp}{supp} 
\DeclareMathOperator{\dist}{dist} 


\begin{document}

\maketitle
\begin{abstract} We characterize partial data uniqueness for the inverse fractional conductivity problem with $H^{s,n/s}$ regularity assumptions in all dimensions. This extends the earlier results for $H^{2s,\frac{n}{2s}}\cap H^s$ conductivities by Covi and the authors. We construct counterexamples to uniqueness on domains bounded in one direction whenever measurements are performed in disjoint open sets having positive distance to the domain. In particular, we provide counterexamples in the special cases $s \in (n/4,1)$, $n=2,3$, missing in the literature due to the earlier regularity conditions. We also give a new proof of the uniqueness result which is not based on the Runge approximation property. Our work can be seen as a fractional counterpart of Haberman's uniqueness theorem for the classical Calderón problem with $W^{1,n}$ conductivities when $n=3,4$. One motivation of this work is Brown's conjecture that uniqueness for the classical Calderón problem holds for $W^{1,n}$ conductivities also in dimensions $n \geq 5$.
\end{abstract}


\section{Introduction}
We study the global inverse fractional conductivity problem in a low regularity setting which extends the earlier theory in \cite{C21,RGZ2022GlobalUniqueness,RZ2022unboundedFracCald}. The considered setting resembles the classical inverse conductivity problem with $W^{1,n}$ conductivities \cite{H15}. The inverse conductivity problem is also known as the \emph{Calderón problem} due to the seminal work of Calderón \cite{C80}, first published in 1980. The classical Calderón problem forms the mathematical model of electrical impedance tomography \cite{UH-inverse-problems-seeing-the-unseen}. The mathematical studies of the inverse conductivity problem date at least back to the work of Langer \cite{Langer33-calderon-halfspace}. The proof of uniqueness for the classical Calderón problem is based on the reduction to the inverse problem for the Schrödinger equation (called \emph{Liouville reduction}), on the construction of complex geometric optics (CGO) solutions \cite{SU87}, and on boundary determination results \cite{KV84}. The proof of global uniqueness for the fractional Calderón problem is based on similar ideas \cite{C21,RGZ2022GlobalUniqueness,RZ2022unboundedFracCald} but the construction of CGO solutions can be replaced by the UCP of the fractional Laplacians \cite{GSU20,RS-fractional-calderon-low-regularity-stability} and boundary determination results are replaced by exterior determination results, using sequences of special solutions whose energies can be concentrated in the limit to any point in the exterior \cite{RGZ2022GlobalUniqueness,RZ2022unboundedFracCald}.

Let $V \subset \R^n$ be a measurable set. We say that $\gamma \in L^\infty(V)$ is \emph{uniformly elliptic} if $a\leq \gamma \leq b$ a.e. in $V$ for some $0<a<b<\infty$.
We assume that all conductivities are uniformly elliptic throughout the article. Suppose that the electric potential $u_f\in H^1(\Omega)$ solves the Dirichlet problem for the classical conductivity equation
\begin{equation}
\label{local-conductivity}
\begin{split}
    \Div(\gamma\nabla u)&=0 \quad \mbox{ in } \Omega, \\ u&=f \quad \mbox{ on } \partial\Omega
\end{split}
\end{equation}
for a given boundary voltage $f\in H^{1/2}(\partial\Omega)$. In the Calderón problem, one aims to recover the conductivity $\gamma$ from the knowledge of the voltage/current measurements given in the form of the Dirichlet-to-Neumann (DN) map $\Lambda_\gamma: H^{1/2}(\partial\Omega)\rightarrow H^{-1/2}(\partial\Omega)$ mapping $f \mapsto \gamma\partial_\nu u_f|_{\partial\Omega}$. 

Let now $s\in(0,1)$ and consider the Dirichlet problem for the fractional conductivity equation
\begin{equation}\begin{split}\label{nonlocal-conductivity}
    \mbox{div}_s (\Theta_\gamma\nabla^su)&=0 \quad \mbox{ in } \Omega, \\ u&=f \quad \mbox{ in } \Omega_e ,
\end{split}\end{equation}
where $\Omega_e\vcentcolon=\R^n\setminus\overline{\Omega}$ is the exterior of the domain $\Omega$, $\Theta_\gamma$ is an appropriate matrix depending on the conductivity $\gamma$, and  $\nabla^s$, $\Div_s$ are the fractional gradient, divergence, respectively (see~\cite{RGZ2022GlobalUniqueness, C21, RZ2022unboundedFracCald}). The inverse problem for the fractional conductivity equation asks to recover the conductivity $\gamma$ from a nonlocal analogue of the DN data, which in the case of Lipschitz domains maps as $\Lambda^s_\gamma \colon H^s(\Omega_e)\rightarrow H^{-s}_{\overline \Omega_e}(\R^n)$ (see Lemma \ref{lemma: well-posedness results and DN maps} for the general definition). We say $u\in H^s(\R^n)$ is a (weak) solution of \eqref{nonlocal-conductivity} if there holds
\begin{equation}\label{eq:generalNonlocalOperators}
    B_\gamma(u,\phi):=\frac{C_{n,s}}{2}\int_{\R^{2n}} \frac{\gamma^{1/2}(x)\gamma^{1/2}(y)}{\abs{x-y}^{n+2s}} (u(x)-u(y))(\phi(x)-\phi(y))\,dxdy =0
\end{equation} 
for all $\phi \in C_c^\infty(\Omega)$ and $u-f \in \tilde{H}^s(\Omega)$. See for instance the survey~\cite{RosOton16-NonlocEllipticSurvey} on more general nonlocal equations of similar type, and the nonlocal vector calculus developed in \cite{DGLZ12}. As $s \uparrow 1$, then the fractional conductivity operator converges in the sense of distributions to the classical conductivity operator when applied to sufficiently regular functions (cf.~\cite[Lemma 4.2]{C21}). In the rest of the article, we will simply write $\Lambda_\gamma$ instead of $\Lambda^s_\gamma$ also in the fractional case. We also define $m_\gamma \vcentcolon = \gamma^{1/2}-1$ and call it the \emph{background deviation} of $\gamma$.

The fractional Calderón problems have been studied intensively in the past few years~\cite{S17}, starting from the work of Ghosh, Salo, and Uhlmann \cite{GSU20}. The research in this area has mainly focused on the recovery of additive perturbations of a priori known nonlocal operators from the exterior DN maps \cite{C21,CMRU22,LL-fractional-semilinear-problems,RZ2022unboundedFracCald}. More recently, there has been growing interest towards inverse problems for nonlocal variable coefficient operators \cite{covi2019inverse-frac-cond,RGZ2022GlobalUniqueness,ghosh2021calderon} and for nonlocal geometric problems \cite{feizmohammadiEtAl2021fractional}. Inverse problems for time-fractional, space-fractional and spacetime-fractional equations are considered recently for example in \cite{KowLinYiHsuanWang2022FracWave,LaiLinRuland2020FracPar,HelinLassasYlinenZhangSpaceTimeFracHeat,Li2021FracParMagn,KianLiuYamamoto2021GenEvol,BanerjeeKrishnanSenapati2022VarCoeffSpaceTimePar}. More references of nonlocal inverse problems can be found from the aforesaid works.

\subsection{Regularity theory for the inverse conductivity problem and Brown's conjecture} In this subsection, we briefly recall some of the important contributions to low regularity theory for the inverse conductivity problem. More references can be found from the cited works. 

One important step in the uniqueness of classical Calderón problem is the determination of conductivities on the boundary of a domain. Kohn and Vogelius proved that the boundary jet of a smooth function can be uniquely determined from the local DN map \cite{KV84}. They applied this result to show uniqueness for piecewise analytic conductivities when $n \geq 2$ \cite{KV85}. Sylvester and Uhlmann proved uniqueness for $C^2$ conductivities when $n \geq 3$ in their celebrated work \cite{SU87}. This result was improved to cover $C^1$ conductivities by Haberman and Tataru~\cite{HT13} and extended to Lipschitz conductivities by Caro and Rogers~\cite{CR16}. 

A low regularity boundary determination result for conductivities, which are continuous a.e. on $\partial \Omega$, was obtained by Brown (see~\cite{Brown} for the precise formulation). The results in~\cite{Brown} improved the earlier boundary determination results for $C^\infty$ conductivities \cite{KV84}, Lipschitz by Alessandrini~\cite{Alessandrini-singular}, $W^{1,p}$ ($p > n$) by Nachman~\cite{Nachman1996GlobalUniqueness}, and $C^0$ by Sylvester and Uhlmann~\cite{Sylvester-Uhlmann-psiDO-bdry-det}. We have proved a low regularity exterior uniqueness result in Theorem~\ref{thm: exterior determination} for the fractional Calderón problem. This is analogous to the boundary determination result in~\cite{Brown}. Furthermore, it generalizes the result in \cite[Theorem 1.2]{RGZ2022GlobalUniqueness}.

It has been conjectured by Brown that the classical Calderón problem is uniquely solvable for uniformly elliptic conductivities in $W^{1,n}$ whenever $n \geq 3$:
\begin{conjecture}[{Brown~\cite[p. 565]{Brown-Torres03}}] Let $n \geq 3$ and $\Omega \subset \R^n$ be a bounded Lipschitz domain. Suppose that $\gamma_1,\gamma_2 \in W^{1,n}(\Omega)$ are uniformly elliptic. Then $\Lambda_{\gamma_1}=\Lambda_{\gamma_2}$ if and only if $\gamma_1=\gamma_2$.\label{conjecture: Brown}
\end{conjecture}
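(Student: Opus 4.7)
The plan is to adapt Haberman's $W^{1,n}$ program \cite{H15} to all dimensions $n \geq 3$, since the cases $n=3,4$ are already settled there. First I would apply the \emph{Liouville reduction}: setting $v = \gamma^{1/2} u$, the equation $\Div(\gamma \nabla u) = 0$ becomes the Schr\"odinger equation $(-\Delta + q)v = 0$ with potential $q = \Delta \gamma^{1/2} / \gamma^{1/2}$, which for $\gamma \in W^{1,n}(\Omega)$ lies naturally in a negative Sobolev space of critical scaling. Combined with Brown's boundary determination result for $W^{1,n}$ conductivities, this reduces the conjecture to the implication $\Lambda_{q_1} = \Lambda_{q_2} \Rightarrow q_1 = q_2$ as distributions on $\Omega$.

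Second I would construct complex geometric optics solutions $v_\zeta(x) = e^{x\cdot\zeta}(1+\psi_\zeta(x))$ for $\zeta \in \C^n$ with $\zeta\cdot\zeta = 0$ and $|\zeta| \to \infty$, by solving $(-\Delta_\zeta + q)\psi_\zeta = -q$ inside the Bourgain-type spaces $X^b_\zeta$ of Haberman-Tataru. The Alessandrini identity then gives
$$\int_\Omega (q_1 - q_2)\, v_{\zeta,1}\, v_{-\overline{\zeta},2}\,dx = 0,$$
and letting $|\zeta|\to\infty$ one aims to extract $\widehat{(q_1-q_2)}(\xi) = 0$ for a.e.~$\xi\in\R^n$. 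The crux is to show that the correctors $\psi_{\zeta,j}$ tend to zero, in the sense of an average over the sphere of admissible directions $\zeta/|\zeta|$, via bilinear multiplication estimates of the schematic form $\|\nabla\gamma \cdot w\|_{\dot X^{-1/2}_\zeta} \lesssim C(\zeta)\,\|w\|_{\dot X^{1/2}_\zeta}$ whose sphere average $C(\zeta)$ decays.

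The main obstacle is that Haberman's averaged bilinear estimate exploits a delicate balance between the $L^n$-integrability of $\nabla\gamma$ and the natural scaling of $\dot X^{1/2}_\zeta$, a balance which closes without loss precisely in dimensions $n\in\{3,4\}$. For $n\geq 5$ the direct argument becomes supercritical and the $TT^\ast$/stationary phase analysis used to average over the sphere degenerates. To get around this I would try to import refined multilinear restriction bounds in the spirit of Bennett-Carbery-Tao, or multilinear Kakeya-type inequalities following Guth, in order to recover the sphere-averaged decay of $\|\psi_\zeta\|$ at the critical endpoint. A complementary angle, suggested by the present work, is to try a limiting procedure $s \uparrow 1$ in the fractional uniqueness result, but this requires a quantitative convergence of the nonlocal DN maps $\Lambda^s_\gamma$ and of the quadratic forms $B_\gamma$ to their local counterparts, which is itself a serious analytic problem.

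In summary, the decisive step is the sphere-averaged bilinear estimate: any proof of Conjecture \ref{conjecture: Brown} for $n \geq 5$ must supply a genuinely new substitute for the low-dimensional endpoint estimate that matches $\nabla\gamma \in L^n$ with the Bourgain norm $\dot X^{1/2}_\zeta$ after averaging in the direction of $\zeta$. Once such a substitute is available, the remaining Liouville/CGO/Alessandrini machinery, together with Brown's $W^{1,n}$ boundary determination, should close the argument in a fairly standard fashion.
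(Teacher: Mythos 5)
This statement is a \emph{conjecture}, not a theorem: the paper explicitly records Brown's conjecture as an open problem, states that it ``is still open when $n \geq 5$, to the best of our knowledge,'' and offers no proof of it. The paper only cites the known cases $n=3,4$ (Theorem~\ref{thm: Haberman}, Haberman) and $n=2$ in the $L^\infty$ setting (Theorem~\ref{thm: astala-paivarinta}, Astala--P\"aiv\"arinta), and then proves a \emph{fractional analogue} (Theorem~\ref{thm: Global uniqueness}) as motivation. So there is no proof in the paper to compare your proposal against, and your write-up should not be presented as a proof.

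To your credit, your sketch is honest about this: you correctly identify the Liouville reduction to $(-\Delta+q)v=0$ with $q=\Delta\gamma^{1/2}/\gamma^{1/2}$, Brown's $W^{1,n}$ boundary determination, the Haberman--Tataru Bourgain-type $\dot X_\zeta^b$ spaces, the CGO/Alessandrini pipeline, and above all the decisive obstruction --- the sphere-averaged bilinear estimate matching $\nabla\gamma \in L^n$ with $\dot X_\zeta^{1/2}$, which closes only for $n\in\{3,4\}$. This is an accurate description of where Haberman's argument stops. But the step that would actually settle the conjecture for $n\geq 5$ --- a genuine replacement for that endpoint estimate, whether via multilinear restriction/Kakeya input or otherwise --- is left as an unproven hypothesis in your text (``any proof \dots must supply a genuinely new substitute''). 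Likewise, the suggested limit $s\uparrow 1$ from the fractional result is flagged by the paper itself (Question~\ref{question: limiting case}) as speculative and is not developed anywhere. In short: no gap in your reasoning per se, but also no proof; you have reproduced the state of the art and named the missing ingredient without supplying it, which is exactly the status the paper assigns to this conjecture.
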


\begin{remark} In~\cite{Brown-Torres03}, Brown actually conjectures a slightly weaker statement: uniqueness for the inverse conductivity problem holds for the uniformly elliptic conductivities in $W^{1,p}(\Omega)$ when $p > n$. We state Conjecture \ref{conjecture: Brown} as it is due to Theorem \ref{thm: Haberman} of Haberman. We also mention here that Uhlmann had earlier conjectured uniqueness for the Lipschitz conductivities and this conjecture was eventually solved in the work of Caro and Rogers~\cite{CR16}. 
\end{remark}

Brown's conjecture is still open when $n \geq 5$, to the best of our knowledge. We have proved in Theorem~\ref{thm: Global uniqueness} that a fractional counterpart of Brown's conjecture holds in all dimensions $n \geq 1$. Finally, we recall two important achievements in this research area. The theorem of Astala and Päivärinta considers the low regularity inverse conductivity problem when $n=2$. See also \cite{AstalaLassasPaivarinta11, NRT2020NonlinearPlancherel} for other advances in the low regularity theory of the classical Calderón problem when $n=2$. The theorem of Haberman shows that Brown's conjecture holds when $n=3,4$.
\begin{theorem}[Astala--Päivärinta~\cite{AP06}]\label{thm: astala-paivarinta} Let $\Omega \subset \R^2$ be a bounded, simply connected domain. Suppose that $\gamma_1,\gamma_2 \in L^{\infty}(\Omega)$ are uniformly elliptic. Then $\Lambda_{\gamma_1}=\Lambda_{\gamma_2}$ if and only if $\gamma_1=\gamma_2$. 
\end{theorem}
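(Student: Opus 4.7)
The plan is to reduce the second-order real conductivity equation to a first-order complex Beltrami equation, which is the device that makes the two-dimensional problem tractable with only $L^\infty$ regularity. Identifying $\R^2 \cong \C$, if $u\in H^1(\Omega)$ is a real-valued solution of $\Div(\gamma\nabla u)=0$, the form $\gamma \star du$ is closed, so there is a conjugate $v\in H^1(\Omega)$ with $\nabla v = \gamma J\nabla u$ (where $J$ is rotation by $\pi/2$). Setting $f=u+iv$, a short computation gives the Beltrami equation
\begin{equation}
\bar\partial f = \mu\,\partial f, \qquad \mu = \frac{1-\gamma}{1+\gamma}.
\end{equation}
Uniform ellipticity of $\gamma$ translates to $\|\mu\|_{L^\infty} \leq k < 1$, so the equation is strictly elliptic in the Beltrami sense, and $\gamma\leftrightarrow \mu$ is a bijection between uniformly elliptic $L^\infty$ conductivities and compactly supported Beltrami coefficients with $\|\mu\|_\infty<1$.

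Next I would construct complex geometric optics solutions directly for the Beltrami equation, following the Astala--P\"aiv\"arinta scheme. After extending $\mu$ by zero outside $\Omega$, one seeks, for each $k\in\C$, a (principal) solution
\begin{equation}
f_\mu(z,k) = e^{ikz} M_\mu(z,k), \qquad M_\mu(z,k) \to 1 \text{ as } |z|\to\infty,
\end{equation}
and analogous solutions associated to $\pm\mu$. Existence and uniqueness of $M_\mu$ in suitable weighted $L^p$ spaces hinges on the invertibility of $I-\mu S_\lambda$ for a modified Beurling transform, which in the $L^\infty$ setting is controlled by Astala's sharp area distortion theorem for quasiconformal maps. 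The asymptotic behaviour as $|k|\to\infty$ then defines a nonlinear scattering transform $\tau_\mu(k)$, playing the role of the Fourier transform in Nachman's $\bar\partial$-method.

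The remaining two steps are (i) to show that the DN map $\Lambda_\gamma$ determines the traces $f_\mu(\cdot,k)|_{\partial\Omega}$, and hence $\tau_\mu$, and (ii) to recover $\mu$ from $\tau_\mu$. For (i), one uses that $\Lambda_\gamma$ is equivalent to a Hilbert-type transform on $\partial\Omega$ associated with the Beltrami equation, so that boundary integral identities let one solve for $f_\mu(\cdot,k)|_{\partial\Omega}$ and then extend outside $\Omega$ via Cauchy-type formulas. For (ii), the pair $(M_\mu, M_{-\mu})$ satisfies a coupled $\bar\partial$ system \emph{in the spectral parameter} $k$,
\begin{equation}
\bar\partial_k M_{\pm\mu}(z,k) = \overline{\tau_{\mp\mu}(k)}\,e_{-k}(z)\,\overline{M_{\mp\mu}(z,k)},
\end{equation}
with normalization $M_{\pm\mu}(z,k)\to 1$ as $|k|\to\infty$; uniqueness for this system reconstructs $M_\mu$ pointwise in $z$, and thus $\mu$, from $\tau_\mu$. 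Combining the two steps gives $\mu_1=\mu_2$ whenever $\Lambda_{\gamma_1}=\Lambda_{\gamma_2}$, hence $\gamma_1=\gamma_2$.

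The main obstacle is the first step in the low regularity setting: because $\mu$ is only $L^\infty$, the $L^p$ theory of the Beurling transform is delicate, and the solvability of $I-\mu S$ on the correct weighted space cannot be obtained by soft means. The heart of the Astala--P\"aiv\"arinta argument is precisely Astala's optimal quasiconformal distortion bounds, which calibrate the right exponent $p$ (just above $1+k$) so that both the existence of CGOs and the decay of $M_\mu - 1$ needed to define $\tau_\mu$ go through. A secondary technical point is justifying the boundary integral identities at the regularity $\gamma\in L^\infty$, for which subelliptic regularity of quasiregular maps and an approximation argument by smooth conductivities (together with stability of $f_\mu$ under $L^p$-perturbations of $\mu$) are required.
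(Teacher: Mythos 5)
The paper does not prove this theorem: it is stated as background, cited directly from Astala--P\"aiv\"arinta \cite{AP06}, to contextualize the fractional low-regularity results that are the actual content of the paper. Your sketch is a reasonable high-level summary of the argument in \cite{AP06}: reduction to the Beltrami equation $\bar\partial f=\mu\,\partial f$ (simple connectivity of $\Omega$ is what guarantees a single-valued conjugate $v$), construction of complex geometric optics solutions $f_\mu=e^{ikz}M_\mu$ calibrated by Astala's area distortion theorem, recovery of boundary data from $\Lambda_\gamma$ via a Hilbert-transform identity, and inversion of a scattering transform by a $\bar\partial_k$ method. One caution: the scattering step is subtler than your schematic $\bar\partial_k$ equation suggests---in \cite{AP06} the transform $\tau_\mu$ has only weak decay, and the reconstruction goes through a transport matrix and sub-exponential estimates on $M_\mu-1$ rather than a directly integrable scattering amplitude, so while the ``coupled $\bar\partial$ system in $k$'' is correct in spirit, its precise form and solvability require the full quasiconformal machinery and are not obtained by soft $L^p$ arguments. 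Also, \cite{AP06} does not proceed by approximating $\gamma$ with smooth conductivities at the end; the $L^\infty$ regularity is handled intrinsically through the quasiconformal distortion bounds rather than via a perturbative stability argument.
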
 
\begin{theorem}[Haberman~\cite{H15}] Let $n = 3,4$ and $\Omega \subset \R^n$ be a bounded Lipschitz domain. Suppose that $\gamma_1,\gamma_2 \in W^{1,n}(\Omega)$ are uniformly elliptic. Then $\Lambda_{\gamma_1}=\Lambda_{\gamma_2}$ if and only if $\gamma_1=\gamma_2$.\label{thm: Haberman}
\end{theorem}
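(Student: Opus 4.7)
The plan is to adapt Haberman's averaged-CGO strategy. I would begin by Liouville-reducing \eqref{local-conductivity} to a Schr\"odinger equation: setting $v = \gamma_j^{1/2}u$ and $q_j := \Delta(\gamma_j^{1/2})/\gamma_j^{1/2}$, which is only a distribution in $W^{-1,n}(\Omega)$ when $\gamma_j \in W^{1,n}$, the conductivity equation $\Div(\gamma_j\nabla u)=0$ becomes $(-\Delta + q_j)v = 0$, and the DN maps are related by an explicit formula involving only the Dirichlet trace of $\gamma_j^{1/2}$. A boundary determination step adapted to $W^{1,n}$ conductivities in the spirit of Brown gives $\gamma_1 = \gamma_2$ on $\partial\Omega$, reducing matters to identifying the DN maps of the Schr\"odinger problem. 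After extending $\gamma_j^{1/2}$ so that the extensions agree outside $\Omega$, the standard integral identity becomes
\[
\ip{q_1 - q_2}{u_1 u_2} = 0
\]
for all $H^1(\R^n)$-solutions $u_j$ of $(-\Delta + q_j)u_j = 0$ that are compactly supported perturbations of the constant $1$.

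Next I would construct CGO solutions $u_j = e^{x\cdot\zeta_j}(1+\psi_j)$ with $\zeta_j \in \C^n$ satisfying $\zeta_j\cdot\zeta_j = 0$ and $\zeta_1 + \zeta_2 = -ik$ for a fixed $k \in \R^n$. The correction term solves $(-\Delta - 2\zeta_j\cdot\nabla)\psi_j = -q_j(1+\psi_j)$. Because $q_j$ is a singular distribution, the classical $L^2$-based Sylvester--Uhlmann resolvent estimate fails, so I would work in the Bourgain-type spaces $\dot X^b_\zeta$ of Haberman--Tataru, whose norm is built from the symbol $p_\zeta(\xi) = \abs{\xi}^2 - 2i\zeta\cdot\xi$. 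The key bilinear estimate then bounds $\norm{fg}_{\dot X^{-1/2}_\zeta}$ in terms of $\norm{f}_{W^{1,n}}\norm{g}_{\dot X^{1/2}_\zeta}$, so that for each \emph{fixed} large $\abs{\zeta}$ one can solve for $\psi_j$ by contraction in $\dot X^{1/2}_\zeta$; but the operator norm does not decay in $\abs{\zeta}$ at the critical scaling, so one cannot pass to the limit pointwise in $\zeta$.

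The decisive step, and the main obstacle, is therefore the averaged estimate. I would parametrize $\zeta_j$ on a circle of radius $\tau$ inside a suitable two-plane compatible with $\zeta_1 + \zeta_2 = -ik$, and show that the mean of $\norm{\psi_j(\zeta)}_{\dot X^{1/2}_\zeta}^2$ over the circle tends to zero as $\tau \to \infty$. The proof of this averaging estimate reduces to a restriction-type bound for the characteristic variety of $p_\zeta$, and a Fourier-analytic computation shows that averaging gains a factor $\tau^{-\alpha}$ whose exponent $\alpha$ is positive precisely when $n \le 4$. This is exactly where the dimensional hypothesis $n \in \{3,4\}$ enters; in higher dimensions the same averaging fails to decay, which is why Conjecture~\ref{conjecture: Brown} remains open. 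From the averaged bound one extracts good sequences $\zeta_j(\tau)$ along which $\psi_j \to 0$ in the appropriate sense.

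Inserting the resulting CGO solutions into the integral identity gives
\[
\ip{q_1-q_2}{e^{-ik\cdot x}(1+\psi_1)(1+\psi_2)} = 0,
\]
and the bilinear estimate combined with $\psi_j \to 0$ along the good sequence lets me drop the $\psi_j$ contributions as $\tau \to \infty$, yielding $\fourier(q_1-q_2)(k) = 0$ for every $k \in \R^n$. Hence $q_1 = q_2$, and a standard argument using that $\gamma_j^{1/2}$ solve $(-\Delta + q_j)\gamma_j^{1/2} = 0$ with matching Cauchy data on $\partial\Omega$, together with uniform ellipticity, promotes this to $\gamma_1 = \gamma_2$ in $\Omega$.
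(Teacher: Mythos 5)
The paper does not prove Theorem~\ref{thm: Haberman}; it is stated purely as background and attributed to Haberman's article \cite{H15}, so there is no internal proof to compare your proposal against. That said, your outline is a faithful account of Haberman's actual argument: the Liouville reduction to a Schr\"odinger equation with a $W^{-1,n}$ potential, boundary determination in the spirit of Brown to match the conductivities on $\partial\Omega$ and extend them to agree outside $\Omega$, construction of CGO solutions in the Haberman--Tataru spaces $\dot X^{\pm 1/2}_\zeta$ built from the symbol $p_\zeta(\xi)=\abs{\xi}^2-2i\zeta\cdot\xi$, the lack of pointwise smallness in $\zeta$, and, crucially, the averaging over circles of $\zeta$ producing decay with an exponent that is positive exactly when $n\le 4$, which is why the dimensional restriction appears and why Conjecture~\ref{conjecture: Brown} remains open for $n\ge 5$. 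One small caveat: the bilinear estimate you quote should really be phrased in terms of a fixed extension of $q_j$ (equivalently an extension of $\gamma_j^{1/2}$ to $\R^n$) rather than the interior $W^{1,n}(\Omega)$ norm, and the sign conventions in $(-\Delta-2\zeta\cdot\nabla)\psi = -q(1+\psi)$ should be tracked, but these are cosmetic.
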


\subsection{Main results}
We present and discuss our main theorems next. Theorem \ref{thm: Global uniqueness} generalizes the main result in \cite[Theorem 1.3]{RGZ2022GlobalUniqueness} from $H^{2s,\frac{n}{2s}} \cap H^s$ regularity to $H^{s,n/s}$ regularity. These results are not contained into each other and there is a tradeoff between differentiability and integrability. We have developed theory for the conductivities having $H^{s,n/s}$ regular background deviations for two reasons: 
\begin{itemize}
    \item This regularity permits to construct new counterexamples to partial data problems when $n=2,3$ for the cases missing in the earlier literature due to integrability issues.
    \item This regularity assumption is analogous to the classical Calderón problem for $W^{1,n}$ conductivities (see the previous discussion about Brown's conjecture).
\end{itemize} We believe that the $H^{s,n/s}$ regularity assumption cannot be \emph{substantially} improved without coming up with a new proof strategy (see also Question \ref{question: Linfty case} and later discussions). In general, one \emph{might} be able to develop theory for all conductivities having their background deviations in the interpolation spaces between $H^{2s,\frac{n}{2s}}$ and $H^{s,n/s}$ but this is out of our scope here. The fractional Liouville reduction from the fractional conductivity equation to the fractional Schrödinger equation breaks down with weaker Bessel regularity assumptions. Our theory in this article achieves the best Bessel regularity assumptions permitted by the current method of reducing the problem into an inverse problem for the fractional Schrödinger equation.

Our main theorem is as follows:
\begin{theorem}[Global uniqueness]
\label{thm: Global uniqueness}
    Let $\Omega\subset \R^n$ be an open set which is bounded in one direction and $0<s<\min(1,n/2)$. Assume that $\gamma_1,\gamma_2\in L^{\infty}(\R^n)$ are uniformly elliptic with $m_1,m_2 \in H^{s,n/s}(\R^n)$. Suppose that $W \subset \Omega_e$ is a nonempty open set such that $\gamma_1,\gamma_2$ are continuous a.e. in $W$.
Then $\left.\Lambda_{\gamma_1}f\right|_{W}=\left.\Lambda_{\gamma_2}f\right|_{W}$ for all $f\in C_c^{\infty}(W)$ if and only if $\gamma_1=\gamma_2$ in $\R^n$.
\end{theorem}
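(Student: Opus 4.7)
The forward implication being trivial, I focus on proving that agreement of the partial DN maps on $W$ forces $\gamma_1 = \gamma_2$ on $\R^n$. Set $m \vcentcolon= m_1 - m_2 \in H^{s,n/s}(\R^n)$; the goal is to show $m \equiv 0$. The plan has three steps: exterior determination, fractional Liouville reduction at critical regularity, and a UCP-based finish that bypasses Runge approximation.

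\textbf{Step 1 (Exterior determination).} First I invoke Theorem \ref{thm: exterior determination}, whose hypotheses are satisfied since $\gamma_1, \gamma_2$ are continuous a.e.\ in $W$ and the partial DN maps agree on $W$. Its conclusion gives $\gamma_1 = \gamma_2$ a.e.\ in $W$, so $m|_W = 0$ and $\supp m \subset \R^n \setminus W$.

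\textbf{Step 2 (Fractional Liouville reduction).} Next I carry out the fractional Liouville substitution $v_i \vcentcolon= \gamma_i^{1/2} u_i$, converting \eqref{nonlocal-conductivity} into the fractional Schrödinger equation
\[
(-\Delta)^s v_i + q_{\gamma_i} v_i = 0 \quad \text{in } \Omega,
\]
for a distributional potential $q_{\gamma_i}$ constructed from $m_i$. This is the main technical step: $H^{s,n/s}$ is the critical Bessel regularity permitted by the current reduction scheme, and one must verify via fractional Leibniz/product estimates and Sobolev embedding that $q_{\gamma_i} v_i$ is a well-defined distribution pairable with $H^s$ test functions. Using $\gamma_1^{1/2} = \gamma_2^{1/2}$ on $W$ from Step 1, the partial DN-map equality on $W$ for the conductivity problem transfers cleanly to an analogous equality for the two Schrödinger problems.

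\textbf{Step 3 (Integral identity, UCP, and conclusion).} An Alessandrini-type identity combined with the DN-map equality produces
\[
\langle q_{\gamma_1} - q_{\gamma_2}, \, v_1 v_2 \rangle = 0
\]
for all solution pairs $v_1, v_2$ with exterior data $\phi_1, \phi_2 \in C_c^\infty(W)$. Rather than invoking the Runge approximation property, I would split $v_i = \phi_i + w_i$ with $w_i \in \tilde H^s(\Omega)$, exploit the disjointness $\supp \phi_i \cap \overline\Omega = \emptyset$, and use the explicit dependence of $q_{\gamma_1} - q_{\gamma_2}$ on $m$ together with the freedom in choosing $\phi_1, \phi_2$ to derive the relation $(-\Delta)^s m = 0$ in $W$ directly from the integral identity and the equations satisfied by $v_i$. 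Combined with $m|_W = 0$ from Step 1, the UCP of the fractional Laplacian \cite{GSU20} then forces $m \equiv 0$ on $\R^n$, giving $\gamma_1 = \gamma_2$.

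The principal obstacle lies in Step 2: establishing the Liouville reduction at the critical Bessel regularity $H^{s,n/s}$ requires fractional product and commutator estimates and a careful interpretation of $q_\gamma$ as an element of an appropriate negative-order dual space. Once Step 2 is in place, Step 3 replaces the usual Runge approximation by a direct UCP argument applied to the scalar quantity $m$, which is the new ingredient promised by the abstract and is what makes the low-regularity argument go through.
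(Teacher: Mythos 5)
Your outline matches the paper's overall architecture (exterior determination, then fractional Liouville reduction, then an Alessandrini-type integral identity closed by UCP), and in spirit it is closest to the paper's \emph{second} proof, which indeed bypasses the Runge approximation. However, there are two substantive gaps.

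First, the UCP you cite is the wrong one. You invoke the $L^2$ unique continuation from \cite{GSU20}, but under the hypotheses of Theorem~\ref{thm: Global uniqueness} one only knows $m = m_1 - m_2 \in H^{s,n/s}(\R^n)$, and since $0<s<n/2$ we have $n/s > 2$, so there is no embedding $H^{s,n/s}(\R^n)\hookrightarrow H^s(\R^n)$ and no reason for $m$ to lie in $L^2$. The proof genuinely requires the $H^{t,p}$ unique continuation property (Theorem~\ref{thm:UCP-general}, from \cite{KRZ2022Biharm}) with $t=s$, $p=n/s$; that is one of the novel ingredients needed to drop the earlier $H^s$ assumption on the background deviations, and substituting the $L^2$ UCP leaves the argument without a working endgame in exactly the regime $p>2$ covered by the theorem.

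Second, Step 3 as written does not close. After splitting $v_i = \phi_i + w_i$ with $w_i\in \widetilde{H}^s(\Omega)$, the Alessandrini quantity $\langle (q_1-q_2)v_1, v_2\rangle$ contains the cross term $\langle (q_1-q_2)w_1, w_2\rangle$, which is supported in $\Omega$ and does not drop out from support considerations alone, nor from any obvious use of "freedom in choosing $\phi_1, \phi_2$." The two mechanisms the paper uses to kill this term are (a) Runge approximation, applied to the products of solutions (first proof), or (b) the Relation-of-solutions Lemma~\ref{lemma: relation for solutions}, which takes $f=g$, uses the DN-map equality and a first application of the UCP to prove $\gamma_1^{1/2}u_f^1 = \gamma_2^{1/2}u_f^2$ identically, after which the Alessandrini identity collapses to $\langle(-\Delta)^{s/2}(m_1-m_2),(-\Delta)^{s/2}(\gamma_1^{1/2}f^2)\rangle_{L^2}=0$; the quadratic dependence on $f$ is then linearized by the polarization trick $f^2-(\phi-f)^2 = 2f-\phi^2$ with a fixed cutoff $\phi$ equal to one on $\supp f$. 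Your sketch declares it will avoid Runge but supplies neither of these devices nor an alternative, so there is a real missing idea rather than a technicality. A further minor inaccuracy: the integral identity only yields $(-\Delta)^s m = 0$ on compactly contained open subsets $U\Subset W$, not on all of $W$, because the manipulation requires a cutoff supported in $W$; this does not affect the UCP step but the statement should be localized.
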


The assumption $H^{s,n/s}(\R^n)$ is needed for the Liouville reduction from the fractional conductivity equation to the fractional Schrödinger equation. We use Bessel potential spaces $H^{s,p}$ to model conductivities rather than the fractional Sobolev spaces $W^{s,p}$ since the fractional Laplacian has in these spaces strong mapping properties, i.e. $(-\Delta)^s\colon H^{t,p}(\R^n) \to H^{t-2s,p}(\R^n)$ is bounded. We assume that $\gamma_i$, $i=1,2$, have representatives, which are continuous a.e. in $W$ due to the exterior determination method (see Theorem~\ref{thm: exterior determination}).

We state the existence of counterexamples for general disjoint measurements in our next theorem. This extends \cite[Theorem 1.3]{RZ2022counterexamples} to cover all cases of $s \in (0,1)$ when $n\geq2$. The earlier theory developed in \cite{RGZ2022GlobalUniqueness,RZ2022unboundedFracCald,RZ2022counterexamples} had integrability issues in the construction of counterexamples when $s \in (n/4,1)$, $n=2,3$, as the background deviations had to be $H^{2s,\frac{n}{2s}}$ regular.

\begin{theorem}[Counterexamples]\label{thm: counterexample}
    Let $\Omega\subset \R^n$ be an open set which is bounded in one direction, $0<s<\min(1,n/2)$. For any nonempty open disjoint sets $W_1,W_2\subset\Omega_e$ with $\dist(W_1 \cup W_2, \Omega) > 0$ there exist two different conductivities $\gamma_1,\gamma_2\in L^{\infty}(\R^n) \cap C^\infty(\R^n)$ such that $\gamma_1(x),\gamma_2(x)\geq \gamma_0>0$, $m_1,m_2\in H^{s,n/s}(\R^n) \cap H^{s}(\R^n)$, and $\left.\Lambda_{\gamma_1}f\right|_{W_2}=\left.\Lambda_{\gamma_2}f\right|_{W_2}$ for all $f\in C_c^{\infty}(W_1)$.
\end{theorem}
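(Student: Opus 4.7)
The plan is to follow the strategy of \cite[Theorem 1.3]{RZ2022counterexamples}, which established analogous counterexamples under the stronger $H^{2s,\frac{n}{2s}}$ regularity assumption. As noted in the discussion preceding this theorem, the regularity requirement in that earlier work excluded the range $s \in (n/4,1)$ for $n=2,3$; the $H^{s,n/s}$-based framework of the present paper removes this obstruction and makes the construction available throughout the admissible range $0<s<\min(1,n/2)$.

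Concretely, I would take $\gamma_1\equiv 1$ (so $m_1=0$) and search for $\gamma_2=(1+m_2)^2$ with $m_2\in C_c^\infty(\R^n)$ nontrivial and $\|m_2\|_{L^\infty}<1/2$. This choice automatically produces $\gamma_2\in L^\infty(\R^n)\cap C^\infty(\R^n)$, uniform ellipticity with some constant $\gamma_0>0$, and $m_2\in H^{s,n/s}(\R^n)\cap H^s(\R^n)$, since $C_c^\infty(\R^n)$ embeds into every Bessel potential space. Because $\Omega$ is bounded in one direction and $\dist(W_1\cup W_2,\Omega)>0$, there is enough room in $\Omega_e$ to choose a nonempty open set $V\subset \Omega_e$ with $\dist(V,\Omega\cup W_1\cup W_2)>0$, in which to place $\supp(m_2)$.

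Via the fractional Liouville reduction, which in the present paper is valid under the $H^{s,n/s}$ assumption on background deviations, the partial DN equality $\Lambda_{\gamma_1}f|_{W_2}=\Lambda_{\gamma_2}f|_{W_2}$ for all $f\in C_c^\infty(W_1)$ translates into an Alessandrini-type integral identity coupling $m_2$ (or the associated Schr\"odinger potential $q_{\gamma_2}=-(-\Delta)^sm_2/(1+m_2)$) with products of solutions to fractional Schr\"odinger equations carrying exterior data supported in $W_1$ and $W_2$. Following \cite[Theorem 1.3]{RZ2022counterexamples}, I would then invoke the failure of Runge approximation for such products of solutions on disjoint exterior sets, together with a duality/Hahn--Banach argument in the relevant Bessel space, to produce a nontrivial $m_2\in C_c^\infty(V)$ realizing this orthogonality.

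The main technical point will be to verify that each step of the construction of \cite{RZ2022counterexamples} transfers to the weaker regularity framework; this is essentially automatic here, because the resulting $m_2$ is smooth and compactly supported, so that all required Bessel-potential identities, well-posedness and DN-map statements (such as Lemma~\ref{lemma: well-posedness results and DN maps}), and the fractional Liouville reduction apply to this particular pair $(\gamma_1,\gamma_2)$ without modification. The regularity claims $m_2\in H^{s,n/s}(\R^n)\cap H^s(\R^n)$ and $\gamma_i\in L^\infty(\R^n)\cap C^\infty(\R^n)$ with $\gamma_i\ge \gamma_0>0$ then follow immediately from the construction.
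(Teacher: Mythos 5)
Your proposal has a fatal structural flaw that makes the construction impossible. You want the nontrivial background deviation (your $m_2$) to be a compactly supported smooth function with $\supp(m_2)$ disjoint from $\Omega$. But the paper's key tool, Lemma~\ref{lemma: ucp of dn map}, characterizes the partial DN map equality (under the assumption $\gamma_1 = \gamma_2$ on $W_1 \cup W_2$, which your setup satisfies) precisely as: $m_0 := m_1 - m_2$ solves
\[
    (-\Delta)^s m + q_{\gamma_2} m = 0 \quad \text{in } \Omega, \qquad m = m_0 \quad \text{in } \Omega_e.
\]
In your setup $m_1 = 0$, so $m_0 = -m_2$. If $\supp(m_2)$ is disjoint from $\Omega$, then $m_0 \equiv 0$ in $\Omega$, and also the multiplier term $q_{\gamma_2} m_0$ vanishes weakly in $\Omega$ (since $m_0 \phi = 0$ for every test function $\phi$ supported in $\Omega$). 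The equation then collapses to $(-\Delta)^s m_0 = 0$ in $\Omega$ together with $m_0|_\Omega = 0$, and the UCP (Theorem~\ref{thm:UCP-general}) forces $m_0 \equiv 0$, i.e. $\gamma_1 = \gamma_2$. So your ansatz cannot produce a counterexample.

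The paper's construction avoids this obstruction precisely because the nontrivial $m_1$ is \emph{not} supported away from $\Omega$; it is a genuinely nonzero $s$-harmonic function inside $\Omega$, obtained by solving a fractional Dirichlet problem on an enlarged set $\Omega_{2\epsilon}$ with a smooth bump $\eta$ placed in a free region $\omega \Subset \Omega_e \setminus \overline{W_1 \cup W_2}$ as exterior datum, and then mollifying. What is arranged to vanish is only the restriction of $m_1$ to $W_1 \cup W_2$, not $m_1|_\Omega$; the resulting $m_1$ is smooth, bounded, in $H^s \cap H^{s,n/s}$, but generally not compactly supported (e.g., when $\Omega$ is unbounded in one direction). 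Relatedly, your proposed mechanism (``failure of Runge approximation plus Hahn--Banach to find $m_2$ orthogonal to products of solutions'') does not match the strategy of \cite{RZ2022counterexamples} and would in any case be ill-suited here: the Alessandrini-type orthogonality condition is nonlinear in $m_2$ (through both $q_{\gamma_2}$ and the solutions $u_g^2$), so it is not amenable to a linear duality argument. The correct approach is exactly the equivalence of Lemma~\ref{lemma: ucp of dn map}, which reduces the construction of counterexamples to a \emph{linear} solvability problem for the background deviation, and that is what the paper solves.
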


The proofs of Theorems \ref{thm: Global uniqueness} and \ref{thm: counterexample} are given in Sections \ref{sec:lowregUniqueness} and \ref{sec: counterexamples}, respectively. 

\subsection{On the contributions of this work} The proofs of Theorems \ref{thm: Global uniqueness} and \ref{thm: counterexample} require that we analyze the two problems at hand individually, contrary to the earlier works \cite{RGZ2022GlobalUniqueness,RZ2022unboundedFracCald,RZ2022counterexamples} where the regularity assumptions were such that the uniqueness and nonuniqueness results had almost shared proofs. In the construction of counterexamples, our argument still requires one to work under the assumption that the background deviations are $H^s$ regular but in the proof of uniqueness, one of the merits in our present work is to get rid of this assumption.

We can remove the assumption that the background deviations belong to $H^s$ by a unique continuation property (UCP) for the fractional Laplacians in $H^{s,p}$ proved recently by Kar and the authors in \cite{KRZ2022Biharm}. The proof of Theorem \ref{thm:UCP-general} is based on $L^2$ Carleman estimates of Rüland with additional $L^p$ estimates and a localization argument for the Caffarelli--Silvestre extensions \cite{CS-nonlinera-equations-fractional-laplacians,CS-extension-problem-fractional-laplacian,KRZ2022Biharm,Ru15}. One also needs analytic regularity theory for elliptic equations and a certain iteration argument for the higher order cases \cite{CMR20,GSU20,KRZ2022Biharm}. The following holds:

\begin{theorem}[{UCP in Bessel potential spaces~\cite[Theorem~2.2]{KRZ2022Biharm}}]
\label{thm:UCP-general} Let $V \subset \R^n$ be a nonempty open set, $s \in \R_+ \setminus \N$, $t \in \R$ and $1 \leq p < \infty$. If $u \in H^{t,p}(\R^n)$ and $u|_V = (-\Delta)^s u|_V = 0$,
then $u \equiv 0$.
\end{theorem}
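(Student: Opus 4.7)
The plan is to treat the base case $s \in (0,1)$ via Rüland's $L^2$ Carleman estimate on the Caffarelli--Silvestre extension, and then reduce the remaining range $s \in \R_+ \setminus \N$ with $s > 1$ to the base case by iteration.

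For the base case, I would form the Caffarelli--Silvestre extension $U$ of $u$, which solves the degenerate elliptic equation $\Div(y^{1-2s}\nabla U)=0$ on $\R^n \times (0,\infty)$ and whose Dirichlet and weighted Neumann traces on $\{y=0\}$ equal $u$ and (a multiple of) $(-\Delta)^s u$ respectively. Both traces vanish on $V \times \{0\}$, so even reflection produces a solution of the same Muckenhoupt-weighted equation in a full two-sided neighbourhood of $V$. Applying Rüland's Carleman inequality centred at an interior point of $V$ then forces $U$, hence $u$, to vanish on a nonempty open subset of $V$; once this local vanishing in an open set is established, the classical global propagation of zeros for the fractional Laplacian of \cite{GSU20}, in the form extended to $L^p$ in \cite{CMR20}, yields $u \equiv 0$ on $\R^n$.

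The genuinely new obstacle here is that Rüland's estimate is inherently $L^2$-based while the hypothesis only supplies $u \in H^{t,p}(\R^n)$ for arbitrary $t \in \R$ and $1 \le p < \infty$. To bridge this gap I would multiply $U$ by a smooth cutoff supported in a small cylinder around an interior point of $V$ and exploit interior regularity for the Muckenhoupt $A_2$-weighted equation, a Moser/Caccioppoli-type statement, to promote the local $L^p$ integrability of $U$ to weighted $L^2$ integrability on slightly smaller subsets. This supplies precisely the input needed to feed into the $L^2$ Carleman estimate; the commutator with the cutoff is supported away from the base point and can be absorbed into the Carleman weight by taking the parameter large enough.

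For $s = m + \sigma$ with $m \in \N$, $\sigma \in (0,1)$, I would argue inductively. Set $v := (-\Delta)^\sigma u \in H^{t-2\sigma,p}(\R^n)$ and observe that $(-\Delta)^m v = (-\Delta)^s u$ vanishes on $V$. Since $(-\Delta)^m$ is a local operator, classical analytic elliptic regularity makes $v$ real analytic on $V$. Combined with the hypothesis $u|_V = 0$ and a duality/mollification argument in the spirit of \cite{CMR20} and \cite{KRZ2022Biharm}, I would deduce $v|_{V'} = 0$ on some nonempty open $V' \subset V$, and then the base case with exponent $\sigma$ applied to the pair $(u|_{V'}, v|_{V'}) = (0,0)$ yields $u \equiv 0$. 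The delicate subsidiary point here is that the trace $(-\Delta)^\sigma u|_{V'}$ has to be interpreted in the distributional sense at the very low Bessel regularity $t$ permitted by the statement, which requires testing against Schwartz functions compactly supported in $V'$ and invoking the real analyticity of $v$ just obtained. The main obstacle throughout is therefore the $L^p$-to-$L^2$ localization of the Caffarelli--Silvestre extension, with the iteration step in the higher order case being a technical but essentially bookkeeping refinement.
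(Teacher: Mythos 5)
Your overall strategy---Caffarelli--Silvestre extension, Rüland's $L^2$ Carleman estimate, an $L^p$-to-$L^2$ localization argument, analytic regularity, and an iteration for the higher-order cases---matches the one-sentence sketch the paper gives (the full argument lives in \cite{KRZ2022Biharm}). The $L^p$-to-$L^2$ bridge via cutoffs and interior Caccioppoli/Moser estimates for the $A_2$-weighted extension equation is also the right idea, and you correctly identify it as the genuinely new difficulty.

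However, the higher-order iteration as you wrote it has a genuine gap. You set $v := (-\Delta)^\sigma u$, note $(-\Delta)^m v|_V = 0$ so that $v$ is real analytic on $V$, and then claim to deduce $v|_{V'}=0$ ``from $u|_V=0$ and a duality/mollification argument.'' This deduction does not follow. Since $(-\Delta)^\sigma$ is nonlocal, the hypothesis $u|_V = 0$ imposes no constraint on $v|_V=(-\Delta)^\sigma u|_V$: for any $u$ supported away from $V$ one has $u|_V=0$ while $(-\Delta)^\sigma u$ is a nontrivial real-analytic function on $V$. Real analyticity of $v$ on $V$ tells you $v$ is smooth there, not that it vanishes anywhere. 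So the input $v|_{V'}=0$ that you need for applying the base case is precisely the hard part of the higher-order statement and cannot be obtained by the elementary means you invoke.

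The natural fix iterates the \emph{integer} Laplacian rather than the fractional part. Put $u_k := (-\Delta)^k u \in H^{t-2k,p}(\R^n)$. Because $(-\Delta)^k$ is local, $u_k|_V=0$ follows directly from $u|_V=0$, and since $(-\Delta)^{s-k} u_k = (-\Delta)^s u$ one also has $(-\Delta)^{s-k}u_k|_V = 0$. Iterating to $k=m$, the base case with exponent $\sigma$ applies to $u_m$ and gives $u_m = (-\Delta)^m u \equiv 0$. Then $|\xi|^{2m}\widehat{u} = 0$, so $\widehat{u}$ is supported at the origin and $u$ is a polynomial; a nonzero polynomial does not belong to any $H^{t,p}(\R^n)$ with $p<\infty$, hence $u\equiv 0$. (Equivalently, one can run the Carleman argument on $W:=\Delta_x^m U$, where $U$ is the extension of $u$: both the Dirichlet and weighted Neumann traces of $W$ vanish on $V$ because the tangential Laplacian commutes with the extension operator, so the base-case Carleman argument and interior analytic hypoellipticity in $\{y>0\}$ force $W\equiv 0$, again yielding $\Delta^m u=0$.) Either route replaces your unjustified step and is the ``iteration argument'' the paper alludes to.
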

We remark that the UCP in the range $1\leq p\leq 2$ directly follows from the UCP for functions $u$ belonging to $H^t(\R^n)$, $t\in\R$, and the embedding theorems in Bessel potential spaces (see \cite[Corollary~3.5]{CMR20}).

We improve the assumption from $H^{2s,\frac{n}{2s}}$ to $H^{s,n/s}$ using sharp multiplication results in Bessel potential spaces, the Runst--Sickel lemmas, among a few other useful estimates in Bessel potential spaces. In this analysis, we are required to use Sobolev multipliers in the fractional Liouville reduction rather than $L^p$ functions as in the previous literature. However, the analysis of the Calderón problem for the fractional Schrödinger equations with Sobolev multiplier perturbations, the so called \emph{singular potentials}, was already established by Rüland and Salo in \cite{RS-fractional-calderon-low-regularity-stability}. Most notably one does not have to use the complex geometric optics solutions. The fractional Liouville reduction in low regularity shares many similarities with Brown's work on the classical Calderón problem \cite{Brown1996GlobalUniqueness}, e.g. both deal with Sobolev multipliers, multiplication estimates and global equations. There are some differences too, only in the fractional case one has to deal with singular integrals related to nonlocal operators. One also has to make certain weak$^*$ approximations and use many estimates for the fractional order Bessel potential spaces. Nevertheless, the Sobolev extensions of conductivities are not encountered in our considerations like in the construction of Brown \cite{Brown1996GlobalUniqueness}.

We also give a new strategy of proof for the uniqueness result in Section \ref{sec: second proof of main theorem} (see the second proof of Theorem \ref{thm: Global uniqueness}). The proof is based on a special relation of solutions for two conductivity equations with the same exterior condition when the DN maps agree (see Lemma \ref{lemma: relation for solutions}), the UCP, and the Alessandrini identity. This argument uses the UCP twice like the earlier proof but does not rely on the Runge approximation property in any step. We believe that this method of proof and Lemma \ref{lemma: relation for solutions} are of independent interest and may be useful in other problems.

Finally, the sharper regularity assumptions in the exterior determination method are based on a more direct proof than the one given in \cite{RGZ2022GlobalUniqueness}. In particular, we avoid using the fractional Liouville reduction completely in the proof of Theorem \ref{thm: exterior determination}. The proof of exterior determination is otherwise similar to \cite{RGZ2022GlobalUniqueness}.

\subsection{Further discussion and open problems}
Our first questions concerns what happens when one takes the limit $s \uparrow 1$:

\begin{question}\label{question: limiting case} Is it possible to obtain new insight about the classical Calderón problem using the analysis of the inverse fractional conductivity problem?
\end{question}
There is one mathematical necessity in such considerations: One should use or approximate exterior conditions, which do not vanish near $\partial \Omega$. If the analysis is only based on exterior conditions which vanish on $\partial \Omega$ (like often is the case in different kind of nonlocal Calderón problems), in the limit $s \uparrow 1$ such solutions cannot contain useful information of $\gamma$ in $\Omega$. This can be seen as in the limit one obtains, at least heuristically speaking, a solution to the associated conductivity equation, which is $0$ in $\Omega$ as the conductivity equation is local and the exterior condition vanishes on $\partial \Omega$. However, these approximations may get more informative and interesting when one uses exterior conditions, which do not vanish identically on $\partial \Omega$. Therefore, Question \ref{question: limiting case} is not only interesting itself but may additionally give new light to the understanding of the classical Calderón problem.

It might be possible to improve our assumptions in exterior determination (Theorem~\ref{thm: exterior determination}) by making more precise $L^p$ estimates and constructing suitable special solutions. It remains an open problem if it suffices to only use some Lebesgue point conditions for the conductivities and the Lebesgue differentiation theorem (or similar concepts). We believe that the positive answer to the following question could be true:
\begin{question}
\label{question: exterior determination Lebesgue points} Let $\Omega\subset \R^n$ be an open set which is bounded in one direction and $0<s<\min(1,n/2)$. Assume that $\gamma_1,\gamma_2\in L^{\infty}(\R^n)$ are uniformly elliptic and $W \subset \Omega_e$ is an open set. Does $\left.\Lambda_{\gamma_1}f\right|_{W}=\left.\Lambda_{\gamma_2}f\right|_{W}$ for all $f\in C_c^{\infty}(W)$ imply that $\gamma_1=\gamma_2$ a.e. in $W$?
\end{question}

Our two main results motivate the following two problems, which we have not been able to answer despite of some efforts:
\begin{question}[Fractional Astala--Päivärinta theorem]
\label{question: Linfty case} 
Does the partial/full data uniqueness hold for the uniformly elliptic conductivities that only satisfy $\gamma \in L^\infty(\R^n)$? Can one remove the assumption that conductivities converge to the trivial conductivity at infinity by some other regularity assumptions?
\end{question}

The positive answer to the first question would be analogous to Theorem \ref{thm: astala-paivarinta} of Astala and Päivärinta for the classical Calderón problem in two dimensions. The other question of a sufficient/necessary decay at infinity is open as long as one does not assume that $\gamma$ is from a generic class like the class of real analytic conductivities. Such genericity assumptions ''localize'' the problem into the exterior determination problem which can be uniquely solved. We further note that the low regularity theory of the inverse fractional conductivity problem in all dimensions, as developed in \cite{RGZ2022GlobalUniqueness,C21,RZ2022unboundedFracCald} and here, resembles more the higher dimensional than the two dimensional classical Calderón problem. Only little is known about uniqueness or nonuniqueness for the classical Calderón problem with uniformly elliptic $L^\infty$ conductivities when $n \geq 3$, see e.g.~\cite{Santacesaria2019} and the references therein for a more detailed discussion.

\begin{question}
\label{question: all counterexamples} 
Are there counterexamples to uniqueness in the partial data inverse problem for all nonempty open sets $W_1,W_2 \subset \R^n$ such that $W_1 \cap W_2 =\emptyset$ and $\dist(W_1 \cup W_2,\Omega)=0$?
\end{question}
These counterexamples would be new and interesting with any regularity assumption. We do not know the answer to Question \ref{question: all counterexamples} even when $\Omega \subset \R^n$ is assumed to be a bounded open set or $n=1$. The complete understanding of Question \ref{question: all counterexamples} would imply the full characterization of uniqueness for the inverse fractional conductivity problem with partial data (in the given regularity) in combination with Theorems \ref{thm: Global uniqueness} and \ref{thm: counterexample}.

\begin{figure}[!ht]
    \centering
    \scalebox{0.85}{
    \begin{tikzpicture}
    \fill[cyan!5, xshift=4cm,yshift=2.5cm] (-1,0.25)-- (12.5,0.25) --(12.5,-1.25)--(-1,-1.25)--(-1,0.25);
    \fill[green!5, xshift=4cm,yshift=2.5cm] (-1,-1.25)-- (12.5,-1.25) --(12.5,-2.75)--(-1,-2.75)--(-1,-1.25);
    \filldraw[color=yellow!50, fill=yellow!10, xshift=7.5cm](2,1.25) circle (1);
    \node[xshift=3.5cm] at (2,2)
    {$\raisebox{-.35\baselineskip}{\Large\ensuremath{W_1}}$};
    \node[xshift=3.5cm] at (2,0.5)
    {$\raisebox{-.35\baselineskip}{\Large\ensuremath{W_2}}$};
    \node[xshift=7.5cm, yshift=0.4cm] at (2.35,1.25)
    {$\raisebox{-.35\baselineskip}{\Large\ensuremath{\Omega}}$};
    \draw [gray,-{Stealth}] (2.9,1.25) --(16.8,1.25);
    \draw [gray,-{Stealth}] (9.5,1.15) --(9.5,3);
    \node at (17.1,1.25)
    {$\raisebox{-.35\baselineskip}{\ensuremath{x_1}}$};
    \node at (9.5,3.3)
    {$\raisebox{-.35\baselineskip}{\ensuremath{x_2}}$};
\end{tikzpicture}}
    \caption{An example illustrating a prototypical situation to be considered in Question~\ref{question: all counterexamples}. The major difficulty in such geometrical settings is that one does not have enough room to use the method of mollification (see \cite{RZ2022counterexamples}) to guarantee the required regularity conditions for the background deviations $m_i\vcentcolon = \gamma_i^{1/2}-1$ for $i=1,2$.}
    \label{fig: Runge Approximation}
\end{figure}
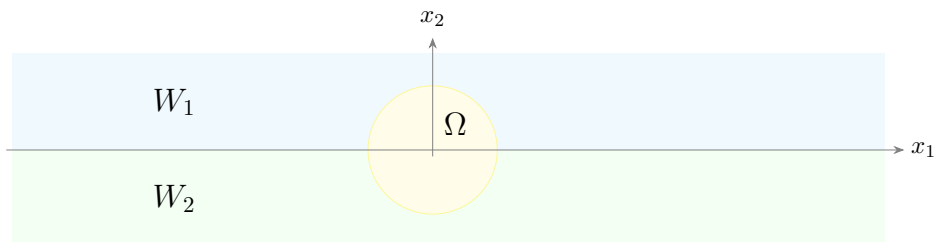

\subsection*{Acknowledgements} The authors thank Giovanni Covi and Joaquim Serra for helpful discussions related to our work. J.R. was supported by the Vilho, Yrjö and Kalle Väisälä Foundation of the Finnish Academy of Science and Letters.

\section{Preliminaries}\label{sec:preliminaries}

\subsection{Basic notation}

 Throughout this article the space of Schwartz functions and the space of tempered distributions will be denoted by $\schwartz(\R^n)$ and $\tempered(\R^n)$, repsectively. The Fourier transform of $u\in\schwartz(\R^n)$ is defined as
\[
    \fourier u(\xi)\vcentcolon = \hat u(\xi) \vcentcolon = \int_{\R^n} u(x)e^{-ix \cdot \xi} \,dx.
\]
Moreover, the Fourier transform acts as an isomorphism on the spaces $\schwartz(\R^n)$, $\tempered(\R^n)$ and we denote the inverse of the Fourier transform by $\ifourier$ in each case. The Bessel potential of order $s \in \R$ is the Fourier multiplier $\vev{D}^s\colon \tempered(\R^n) \to \tempered(\R^n)$, that is
\begin{equation}\label{eq: Bessel pot}
    \vev{D}^s u \vcentcolon = \ifourier(\vev{\xi}^s\widehat{u}),
\end{equation} 
where $\vev{\xi}\vcentcolon = (1+|\xi|^2)^{1/2}$. For any $s \in \R$ and $1 \leq p < \infty$, the Bessel potential space $H^{s,p}(\R^n)$ is defined by
\begin{equation}
\label{eq: Bessel pot spaces}
    H^{s,p}(\R^n) \vcentcolon = \{ u \in \tempered(\R^n)\,;\, \vev{D}^su \in L^p(\R^n)\},
\end{equation}
 which we endow with the norm $\norm{u}_{H^{s,p}(\R^n)} \vcentcolon = \norm{\vev{D}^su}_{L^p(\R^n)}$. If $\Omega\subset \R^n$, $F\subset\R^n$ are given open and closed sets, then we define the following local Bessel potential spaces:
\begin{equation}\label{eq: local bessel pot spaces}
\begin{split}
    \widetilde{H}^{s,p}(\Omega) &\vcentcolon = \mbox{closure of } C_c^\infty(\Omega) \mbox{ in } H^{s,p}(\R^n),\\
    H_F^{s,p}(\R^n) &\vcentcolon =\{\,u \in H^{s,p}(\R^n)\,;\, \supp(u) \subset F\,\}.
\end{split}
\end{equation}
We see that $\widetilde{H}^{s,p}(\Omega),H^{s,p}_F(\R^n)$ are closed subspaces of $H^{s,p}(\R^n)$. As customary, we omit the index $p$ from the above notations in the case $p=2$.

If $u\in\tempered(\R^n)$ is a tempered distribution and $s\geq 0$, the fractional Laplacian of order $s$ of $u$ is the Fourier multiplier
\[
    (-\Delta)^su\vcentcolon = \ifourier(|\xi|^{2s}\widehat{u}),
\]
whenever the right hand side is well-defined. If $p\geq 1$ and $t\in\R$, the fractional Laplacian is a bounded linear operator $(-\Delta)^{s}\colon H^{t,p}(\R^n) \to H^{t-2s,p}(\R^n)$. In the special case $u\in\schwartz(\R^n)$ and $s\in(0,1)$, we have the identities (see e.g.~\cite[Section 3]{DINEPV-hitchhiker-sobolev})
\begin{equation}
\begin{split}
    (-\Delta)^su(x)&=C_{n,s}\,\text{p.v.}\int_{\R^n}\frac{u(x)-u(y)}{|x-y|^{n+2s}}\,dy\\
    &= -\frac{C_{n,s}}{2}\int_{\R^n}\frac{u(x+y)+u(x-y)-2u(x)}{|y|^{n+2s}}\,dy,
\end{split}
\end{equation}
where $C_{n,s}\vcentcolon =\frac{4^s\Gamma(n/2+s)}{\pi^{n/2}|\Gamma(-s)|}$. Moreover, in the study of the inverse problem related to the fractional conductivity equation one property of the fractional Laplacian turns out to be essential, namely the UCP (see Theorem~\ref{thm:UCP-general}).

Another important property of the fractional Laplacian which allows to study the fractional conductivity equation on domains bounded in one direction (cf.~\cite[Definition 2.1]{RZ2022unboundedFracCald}) is the Poincar\'e inequality, which says that for any $\Omega\subset\R^n$ bounded in one direction there exists a constant $C>0$ such that
\[
    \|u\|_{L^p(\R^n)}\leq C\|(-\Delta)^{s/2}u\|_{L^p(\R^n)}
\]
for all $u\in C_c^{\infty}(\Omega)$, where $s\geq 0$, $p\geq 2$ or $s\geq 1$, $1<p<2$ (cf.~\cite[Theorem 2.2]{RZ2022unboundedFracCald}). The Poincar\'e inequality in the range $0<s<1$, $1<p<2$, for unbounded domains is not present in the existing literature to the best of our knowledge.
    
For the rest of this section we fix $s\in(0,1)$. The fractional gradient of order $s$ is the bounded linear operator $\nabla^s\colon H^s(\R^n)\to L^2(\R^{2n};\R^n)$ given by (see ~\cite{C21,DGLZ12,RZ2022unboundedFracCald})
    \[
        \nabla^su(x,y)\vcentcolon =\sqrt{\frac{C_{n,s}}{2}}\frac{u(x)-u(y)}{|x-y|^{n/2+s+1}}(x-y),
    \]
    and satisfies
    \begin{equation}
    \label{eq: bound on fractional gradient}
        \|\nabla^su\|_{L^2(\R^{2n})}=\|(-\Delta)^{s/2}u\|_{L^2(\R^n)}\leq \|u\|_{H^s(\R^n)}
    \end{equation}
    for all $u\in H^s(\R^n)$. The adjoint of $\nabla^s$ is called fractional divergence of order $s$ and denoted by $\Div_s$. More concretely, the fractional divergence of order $s$ is the bounded linear operator 
    \[
        \Div_s\colon L^2(\R^{2n};\R^n)\to H^{-s}(\R^n)
    \] 
    satisfying
    \[
        \langle \Div_su,v\rangle_{H^{-s}(\R^n)\times H^s(\R^n)}=\langle u,\nabla^sv\rangle_{L^2(\R^{2n})}
    \]
    for all $u\in L^2(\R^{2n};\R^n),v\in H^s(\R^n)$. One can show that (see ~\cite[Section 8]{RZ2022unboundedFracCald})
    \[
        \|\Div_s(u)\|_{H^{-s}(\R^n)}\leq \|u\|_{L^2(\R^{2n})}
    \]
    for all $u\in L^2(\R^{2n};\R^n)$, and also $(-\Delta)^su=\Div_s(\nabla^su)$ weakly for all $u\in H^s(\R^n)$ (see ~\cite[Lemma 2.1]{C21}). 

\subsection{Triebel--Lizorkin spaces and the Runst--Sickel lemma}
Before recalling several important results from harmonic analysis which will be used throughout this article, we introduce the Triebel--Lizorkin spaces $F^s_{p,q}=F^s_{p,q}(\R^n)$ following the exposition in \cite{BrezisComposition} or \cite{Triebel-Theory-of-function-spaces,Triebel-Theory-of-functions-spaces-I-I}. In the sequel, we will write for brevity $B_r$ instead of $B_r(0)$. Fix any $\psi_0\in C_c^{\infty}(\R^n)$ satisfying
\[
    0\leq \psi_0\leq 1,\quad \psi_0(\xi)=1\quad\text{for}\quad |\xi|\leq 1\quad \text{and}\quad \psi_0(\xi)=0\quad \text{for}\quad |\xi|\geq 2.
\]
Let $\psi_j\in C_c^{\infty}(B_{2^{j+1}})$, $j\geq 1$, be given by
\[
    \psi_j(\xi)=\psi_0(\xi/2^j)-\psi_0(\xi/2^{j-1}).
\]
In this section, we write $u_j\vcentcolon =u\ast \phi_j=\ifourier(\psi_j\hat{u})\in \schwartz(\R^n)$ for any $u\in\tempered(\R^n)$, where $\phi_j=\ifourier(\psi_j)$ ($j\geq 1$) and one has the Littlewood--Paley decomposition
\[
    u=\sum_{j\geq 0}u_j\quad \text{in}\quad \tempered(\R^n).
\]
For all $s\in\R$ and $0<p,q\leq \infty$, we set (\cite[Section 2.3.1]{Triebel-Theory-of-function-spaces})
\[
    F^s_{p,q}=\{\,u\in\tempered(\R^n)\,;\, \|u\|_{F^s_{p,q}}=\|\,\|2^{js}u_j(x)\|_{\ell^q}\|_{L^p(\R^n)}<\infty\}.
\]
\begin{remark}
\label{remark: identification}
    \begin{enumerate}[(i)]
        \item\label{item 1 triebel spaces} For $0<p<\infty$ different choices of $\psi_0$ yield equivalent quasi-norms (see \cite[Section 2.3.5]{Triebel-Theory-of-function-spaces}), but for $p=\infty$, $0<q<\infty$ this is in general wrong as shown in \cite[Section 2.3.2]{Triebel-Theory-of-functions-spaces-I-I}, and for $s\in\R$, $0<p<\infty$, $0<q\leq \infty$ the Triebel-Lizorkin spaces are quasi-Banach spaces and Banach spaces if $p,q\geq 1$ (see \cite[Section 2.3.3]{Triebel-Theory-of-function-spaces}).
        \item\label{item 2 triebel spaces} By the embedding $\ell^{q_1}\hookrightarrow \ell^{q_2}$ for $0<q_1\leq q_2\leq \infty$, we have $F^s_{p,q_1}\subset F^s_{p,q_2}$ when $s\in\R$, $0<p\leq \infty$ and $0<q_1\leq q_2\leq \infty$.
        \item\label{item 3 triebel spaces} We have the following identifications with equivalent norms (\cite{BrezisComposition,Triebel-Theory-of-function-spaces}):
        \begin{enumerate}[(I)]
            \item\label{first identification remark A1} $F^0_{p,2}=L^p(\R^n)$ for $1<p<\infty$,
            \item\label{second identification remark A1} $F^s_{p,2}=H^{s,p}(\R^n)$ for $1<p<\infty$, $s\in\R$,
            \item\label{third identification remark A1} $F^s_{p,p}=W^{s,p}(\R^n)$ for $1\leq p<\infty$, $s\in (0,\infty)\setminus\N$,
            \item\label{fourth identification remark A1} $L^{\infty}(\R^n)\hookrightarrow F^{0}_{\infty,\infty}$ with
            \[
            \|u\|_{F^0_{\infty,\infty}}=\sup_{j\in\N_0,x\in\R^n}|u_j(x)|\leq C\|u\|_{L^{\infty}(\R^n)}.
            \]
        \end{enumerate}
    \end{enumerate}
\end{remark}

\begin{proposition}[{Runst-Sickel lemma I, \cite[Lemma 5]{BrezisComposition} or \cite[p. 345]{RunstSickel}}]
\label{prop: Runs-Sickel Lemma I}
    Let $0<s<\infty$, $1<q<\infty$, $1<p_1,p_2,r_1,r_2\leq \infty$ satisfy
    \[
        0<\frac{1}{p}\vcentcolon =\frac{1}{p_1}+\frac{1}{r_2}=\frac{1}{p_2}+\frac{1}{r_1}<1,
    \]
then for all $f\in F^s_{p_1,q}\cap L^{r_1}(\R^n)$, $g\in F^s_{p_2,q}\cap L^{r_2}(\R^n)$ there holds
\[
    \|fg\|_{F^{s}_{p,q}}\leq C(\|Mf(x)\|2^{sj}g_j(x)\|_{\ell^q}\|_{L^p(\R^n)}+\|Mg(x)\|2^{sj}f_j(x)\|_{\ell^q}\|_{L^p(\R^n)})
\]
and
\begin{equation}
\label{eq: estimate Runst Sickel}
    \|fg\|_{F^{s}_{p,q}}\leq C(\|f\|_{F^{s}_{p_1,q}}\|g\|_{L^{r_2}(\R^n)}+\|g\|_{F^{s}_{p_2,q}}\|f\|_{L^{r_1}(\R^n)}).
\end{equation}
\end{proposition}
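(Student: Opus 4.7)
The plan is to prove this via Bony's paraproduct decomposition together with the Fefferman--Stein vector-valued maximal inequality. Throughout, write $f_j = \ifourier(\psi_j \hat f)$ and similarly for $g_k$, and abbreviate $\tilde f_j = \sum_{|i-j|\le 2} f_i$ (so that $\tilde f_j f_k = f_k$ whenever $|j-k|\le 1$). The aim is to split the pointwise product as
\begin{equation}
fg = \Pi_1(f,g) + \Pi_2(f,g) + \Pi_3(f,g),
\end{equation}
where $\Pi_1 = \sum_{k} S_{k-3}f \cdot g_k$ is the low-high paraproduct, $\Pi_2 = \sum_j f_j\cdot S_{j-3}g$ is the high-low paraproduct, and $\Pi_3 = \sum_{|j-k|\le 2} f_j g_k$ is the diagonal/high-high part; here $S_m = \sum_{j\le m}(\,\cdot\,)_j$ denotes the low-frequency projection.

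First I would handle the paraproduct $\Pi_1$. The summand $S_{k-3}f\cdot g_k$ has Fourier support in an annulus $\{|\xi|\sim 2^k\}$, so its $j$-th Littlewood--Paley piece is supported in $j$'s with $|j-k|\le C$. Hence the $F^s_{p,q}$ quasi-norm of $\Pi_1(f,g)$ is controlled by $\|\,\|2^{ks} (S_{k-3}f) g_k\|_{\ell^q_k}\,\|_{L^p}$. Using the pointwise bound $|S_{k-3}f(x)|\le C\, Mf(x)$ (with $M$ the Hardy--Littlewood maximal function, uniformly in $k$) and pulling $Mf(x)$ out of the $\ell^q_k$ norm gives
\begin{equation}
\|\Pi_1(f,g)\|_{F^s_{p,q}} \le C\,\bigl\|\,Mf(x)\,\|2^{ks} g_k(x)\|_{\ell^q_k}\,\bigr\|_{L^p}.
\end{equation}
The symmetric estimate for $\Pi_2$ follows by swapping the roles of $f$ and $g$ and gives the second term on the right-hand side of the first claimed inequality.

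The main obstacle is the diagonal part $\Pi_3$, because here $f_j g_k$ no longer has frequency support in a clean annulus of size $2^{\max(j,k)}$, only in the ball of that size, so one cannot recover the $s$-derivative just from one factor. The standard trick is to use $s>0$: one writes $\Pi_3 = \sum_{|m|\le 2}\sum_k f_k g_{k+m}$, and for each fixed $m$ estimates the $j$-th Littlewood--Paley piece of $f_k g_{k+m}$ by summing over $k\ge j-C$ (its Fourier support lies in $\{|\xi|\le C 2^k\}$). After inserting $2^{js}=2^{(j-k)s}2^{ks}$ with $s>0$, the factor $2^{(j-k)s}$ is summable over $k\ge j-C$ and by a Young-type discrete convolution one reduces to the same expression $\|Mf(x)\,\|2^{ks}g_k\|_{\ell^q_k}\|_{L^p}$ (after applying Fefferman--Stein to pass the maximal function inside $\ell^q$). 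This is precisely the step where the hypothesis $s>0$ is used in an essential way and where $1<q<\infty$, $1<p<\infty$ enter through Fefferman--Stein; combining all three paraproducts yields the first inequality of the proposition.

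Finally, the second inequality is a direct consequence of the first: by Hölder's inequality in the form $L^p = L^{p_1}\cdot L^{r_2}$ with $1/p_1+1/r_2=1/p$ one has
\begin{equation}
\bigl\|Mf\cdot \|2^{ks}g_k\|_{\ell^q_k}\bigr\|_{L^p} \le \|Mf\|_{L^{r_1}}\,\bigl\|\,\|2^{ks}g_k\|_{\ell^q_k}\bigr\|_{L^{p_2}} \le C\,\|f\|_{L^{r_1}}\|g\|_{F^s_{p_2,q}},
\end{equation}
where the boundedness $M\colon L^{r_1}\to L^{r_1}$ uses $r_1>1$, and analogously for the other term with $1/p_2+1/r_1=1/p$. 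This closes the argument; the two conditions $0<1/p$ and $1/p<1$ are exactly what is needed so that $p\in(1,\infty)$ and Hölder together with Fefferman--Stein apply.
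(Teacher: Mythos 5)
This proposition is cited in the paper from the references (Brezis--Mironescu and Runst--Sickel) rather than proved there, so there is no in-paper argument to compare against; your task was effectively to reconstruct the proof from the literature, and you have done so with the standard and correct tools. Your paraproduct decomposition into low-high, high-low and diagonal pieces, combined with the pointwise bound $|S_{k-3}f|\le C\,Mf$ and the Fefferman--Stein vector-valued maximal inequality, is exactly the mechanism behind \cite[Lemma~5]{BrezisComposition} and the corresponding estimates in \cite{RunstSickel}. The key points are all correctly identified: the annulus-supported pieces $\Pi_1,\Pi_2$ telescope into the $F^s_{p,q}$ quasi-norm via the building-block characterization; the diagonal part $\Pi_3$ only has ball-supported pieces, and $s>0$ is needed to resum the tail $2^{(j-k)s}$ over $k\ge j-C$; and the hypotheses $1<p<\infty$, $1<q<\infty$ are exactly what Fefferman--Stein requires. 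The passage from the first (maximal-function) inequality to the second via Hölder with the exponent relations $1/p = 1/p_2+1/r_1 = 1/p_1+1/r_2$ and the $L^{r}$-boundedness of $M$ (trivial for $r=\infty$) is likewise correct.

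Two minor technical remarks, neither of which is a gap. First, in the bound for $\Pi_1$ you still need to absorb the outer Littlewood--Paley projection $\phi_j\ast(\cdot)$ on the (annulus-supported) blocks $S_{k-3}f\cdot g_k$; this is done either by the building-block characterization of $F^s_{p,q}$ (which hides a Fefferman--Stein application) or, equivalently, by $|\phi_j\ast h|\le C\,Mh$ followed by Fefferman--Stein, so Fefferman--Stein is used in $\Pi_1$ as well, not only in $\Pi_3$. Second, for $\Pi_3$ you bound $|f_k|\le C\,Mf$ \emph{before} the sum over $k$ is telescoped; this places $\Pi_3$ under the same $\|Mf\cdot\|2^{ks}g_k\|_{\ell^q_k}\|_{L^p}$ umbrella as $\Pi_1$ (after the harmless index shift $k\mapsto k+m$), which is precisely what the two-term right-hand side of the statement demands. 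With these clarifications, the argument is complete and matches the approach of the cited sources.
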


\begin{remark}
    Note that the Kato--Ponce inequality for $1<p<\infty$ (see \cite[Theorem 1]{KatoPonceGrafakos}, \cite[Theorem 1.4]{SmoothingPropSchrOp}) can be seen as a special case of the estimate \eqref{eq: estimate Runst Sickel} by choosing $q=2$ and using the fact that $F^{s}_{p,2}=H^{s,p}$ (cf.~Remark~\ref{remark: identification}\ref{second identification remark A1}).
\end{remark}

\begin{proposition}[{Runst-Sickel lemma II, \cite[Corollary 3]{BrezisComposition}}]
\label{prop: Runs-Sickel Lemma II}
    Let $0<s<\infty$, $1<q<\infty$. Then the following assertions hold:
    \begin{enumerate}[(i)]
        \item\label{item 1 runst sickel II } If $1<p_1,p_2,r_1,r_2< \infty$ satisfy
    \[
        0<\frac{1}{p}\vcentcolon =\frac{1}{p_1}+\frac{1}{r_2}=\frac{1}{p_2}+\frac{1}{r_1}<1,
    \] then the multiplication map 
        \[
            (F^{s}_{p_1,q}\cap L^{r_1}(\R^n))\times (F^{s}_{p_2,q}\cap L^{r_2}(\R^n))\ni (f,g)\mapsto fg\in F^s_{p,q}
        \]
        is continuous.
        \item\label{item 2 runst sickel II } If $1<p<\infty$ and there holds
        \[
            \begin{cases}
                    f^k\to f\quad \text{in}\quad F^s_{p_1,q},\, & \|f^k\|_{L^{\infty}(\R^n)}\leq C,\\
                    g^k\to g\quad \text{in}\quad F^s_{p_1,q},\, & \|g^k\|_{L^{\infty}(\R^n)}\leq C
            \end{cases}
        \]
        for some $C>0$, then $f^kg^k\to fg$ in $F^{s}_{p,q}$.
        \item\label{item 3 runst sickel II } Let $1<p_1,r,p<\infty$ be such that
        \[
            \frac{1}{p}=\frac{1}{p_1}+\frac{1}{r}
        \]
        and there holds
        \[
            \begin{cases}
                    f^k\to f\quad \text{in}\quad F^s_{p_1,q},\, & \|f^k\|_{L^{\infty}(\R^n)}\leq C,\\
                    g^k\to g\quad \text{in}\quad F^s_{p,q}\cap L^r\, &
            \end{cases}
        \]
        for some $C>0$, then $f^kg^k\to fg$ in $F^{s}_{p,q}$.
    \end{enumerate}
\end{proposition}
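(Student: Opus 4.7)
The plan is to reduce all three parts to the bilinear estimate \eqref{eq: estimate Runst Sickel} of Proposition \ref{prop: Runs-Sickel Lemma I}, combining it with telescoping decompositions, weak-$*$ compactness, and density arguments where needed. Part (i) is the cleanest: given $(f^k, g^k) \to (f,g)$ in the product space, I would write
\begin{equation}
f^k g^k - fg = (f^k - f) g^k + f (g^k - g)
\end{equation}
and apply \eqref{eq: estimate Runst Sickel} to each summand. The $F^s_{p_1,q}$-convergence of $f^k$ against the $L^{r_2}$-boundedness of $g^k$ (automatic from convergence in $F^s_{p_2,q}\cap L^{r_2}$) handles the first term; symmetrically for the second. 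This gives joint continuity.

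For part (ii) the key difficulty is that convergence is only assumed in $F^s_{p_1,q}$, whereas in $L^\infty$ one only has uniform boundedness. First I would verify that the limits $f,g$ belong to $L^\infty$ with norm $\leq C$, via weak-$*$ compactness together with uniqueness of distributional limits. Next I would interpolate: using the Sobolev embedding $F^s_{p_1,q} \hookrightarrow L^{p_1^\ast}$ for some finite $p_1^\ast \geq p_1$, the $F^s_{p_1,q}$-convergence yields $L^{p_1^\ast}$-convergence, which combined with the uniform $L^\infty$ bound upgrades to convergence in $L^r$ for every $r\in[p_1^\ast,\infty)$ via the standard inequality $\|f^k-f\|_{L^r}\leq\|f^k-f\|_{L^{p_1^\ast}}^{p_1^\ast/r}\|f^k-f\|_{L^\infty}^{1-p_1^\ast/r}$. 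With a sufficiently large finite $r$ in hand, applying \eqref{eq: estimate Runst Sickel} to the telescoping decomposition above, with $L^r$-norms playing the role formerly played by the $L^\infty$-norms, gives $f^k g^k \to fg$ in $F^s_{p,q}$.

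Part (iii) uses the same telescoping identity but with asymmetric roles: the second factor already lives in $F^s_{p,q}\cap L^r$ while the first factor has the $L^\infty$ bound. The term $f(g^k-g)$ is controlled directly by \eqref{eq: estimate Runst Sickel} using convergence of $g^k$ in both $F^s_{p,q}$ and $L^r$ against $\|f\|_{F^s_{p_1,q}}$ and $\|f\|_{L^\infty}$. The awkward piece is $(f^k-f)g^k$: the direct application of \eqref{eq: estimate Runst Sickel} yields
\begin{equation}
\|(f^k-f)g^k\|_{F^s_{p,q}} \leq C\bigl(\|f^k-f\|_{F^s_{p_1,q}}\|g^k\|_{L^r}+\|g^k\|_{F^s_{p,q}}\|f^k-f\|_{L^\infty}\bigr),
\end{equation}
and only the first summand is known to vanish. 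To close this, I would approximate $g$ by functions $g^\varepsilon$ (e.g.\ Schwartz) with $g^\varepsilon\to g$ in $F^s_{p,q}\cap L^r$, split $g^k = (g^k-g)+(g-g^\varepsilon)+g^\varepsilon$, and observe that on the $L^\infty$-bounded remainder the factor $\|g-g^\varepsilon\|_{F^s_{p,q}}$ is small, while on the smooth piece $g^\varepsilon$ one may invoke the algebra property from part (ii) together with the interpolation-upgraded $L^r$-convergence of $f^k$.

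The main obstacle, in my view, is precisely this last step in part (iii). The interpolation trick from (ii) is clean when both factors sit in the symmetric scale $F^s_{p_1,q}\cap L^\infty$, but in (iii) the integrability exponents $r_1=\infty$ and $r_2=r<\infty$ are forced to match a hypothesis $1/p=1/p_1+1/r$ that cannot be loosened without breaking the estimate. Making the residual contribution $\|g\|_{F^s_{p,q}}\|f^k-f\|_{L^\infty}$ vanish therefore demands a genuine density argument in Triebel--Lizorkin space, rather than a black-box application of \eqref{eq: estimate Runst Sickel}; this is the step where the Littlewood--Paley machinery and the precise structure of $F^s_{p,q}$ really enter, and where I would expect the most care to be needed.
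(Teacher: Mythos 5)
The paper does not prove this proposition; it cites it as \cite[Corollary 3]{BrezisComposition} (Brezis--Mironescu), so there is no in-paper proof to compare against. I will therefore only assess the internal soundness of your proposal.

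Part (i) is fine: the telescoping decomposition $f^kg^k - fg = (f^k-f)g^k + f(g^k-g)$ together with \eqref{eq: estimate Runst Sickel} closes cleanly, since convergence is assumed in both the Triebel--Lizorkin \emph{and} the Lebesgue scales.

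Part (ii) has a genuine gap. After you interpolate the $L^\infty$ bound against the $L^{p_1}$-convergence to obtain $f^k\to f$ in $L^r$ for finite $r$, applying \eqref{eq: estimate Runst Sickel} with that $L^r$ in place of $L^\infty$ does \emph{not} produce a bound in $F^s_{p,q}$: the exponents in \eqref{eq: estimate Runst Sickel} are rigidly linked via $1/\tilde p = 1/p_1 + 1/r$, and taking $r<\infty$ forces $\tilde p < p$. Convergence in $F^s_{\tilde p,q}$ with $\tilde p<p$ does not imply convergence in $F^s_{p,q}$ (the Sobolev-type embedding only trades integrability for smoothness, so you land in a lower-smoothness space). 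Thus ``with $L^r$-norms playing the role formerly played by the $L^\infty$-norms'' silently changes the target space. The clean way to close part (ii) is to use the \emph{first}, sharper estimate in Proposition~\ref{prop: Runs-Sickel Lemma I} (the one with $Mf$ and the square-function), not the second: the term $\|M(f^k-f)\cdot\|2^{sj}g_j\|_{\ell^q}\|_{L^p}$ is handled by dominated convergence, since $M(f^k-f)$ is uniformly bounded in $L^\infty$ by the Hardy--Littlewood maximal inequality and tends to $0$ in $L^{p_1}$ (hence a.e.\ along a subsequence), while $\|2^{sj}g_j\|_{\ell^q}\in L^p$ is a fixed dominating envelope. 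Your interpolation step is not needed and does not do the job.

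Part (iii): the density decomposition $g^k=(g^k-g)+(g-g^\varepsilon)+g^\varepsilon$ is the right idea, but two issues. First, the invocation of ``the algebra property from part (ii)'' for the piece $(f^k-f)g^\varepsilon$ is circular if part (ii) is not yet proved, and in any case unnecessary: once $g^\varepsilon\in C_c^\infty$, simply reapply \eqref{eq: estimate Runst Sickel} with the exponents swapped, namely $r_1<\infty$ large so that the $L^{r_1}$-norm of $f^k-f$ (available by interpolation against the $L^\infty$ bound) is the vanishing factor, and $g^\varepsilon\in F^s_{p_2,q}$ with $1/p_2 = 1/p - 1/r_1 > 0$, which holds because $g^\varepsilon$ is Schwartz; then both terms on the right vanish without ever touching $\|f^k-f\|_{L^\infty}$. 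Second, you should note that the density of $C_c^\infty$ in $F^s_{p,q}\cap L^r$ (simultaneously in both norms) requires the usual truncation-plus-mollification argument, not merely density in each factor. As with part (ii), the whole density machinery can be bypassed by using the maximal-function form of Proposition~\ref{prop: Runs-Sickel Lemma I} with dominated convergence; that is the cleaner route and the one Brezis--Mironescu take.
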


\subsection{Sobolev multipliers}

We recall now the definition of Sobolev multipliers. We mainly follow the exposition in \cite{Mazya}. Applications to inverse problems can be found in \cite{RS-fractional-calderon-low-regularity-stability,CMRU22,RZ2022unboundedFracCald}. We let $\distr(\R^n)$ stand for the space of distributions and by $\cdistr(\R^n)$ the space of compactly supported distributions.  For any $s\in \R$, we denote by $M(H^s\rightarrow H^{-s})$ the space of Sobolev multipliers of order $s$, which consists of all distributions $f\in \distr(\R^n)$ such that
$$\|f\|_{s,-s} \vcentcolon = \sup \{\abs{\ip{f}{u v}} \,;\, u,v \in C_c^\infty(\mathbb R^n), \norm{u}_{H^s(\R^n)} = \norm{v}_{H^{s}(\R^n)} =1 \}$$
is finite. Moreover, we let $M_0(H^s \to H^{-s})$ be the closure of $C_c^\infty(\R^n)$ in $M(H^s \to H^{-s})$. According to the authors knowledge it is still an open problem to fully characterize $M_0(H^s\rightarrow H^{-s})$ and to show that $M_0(H^s\rightarrow H^{-s})\subsetneq M(H^s\rightarrow H^{-s})$. A partial answer to this problem is given in \cite[Remark 2.5]{RS-fractional-calderon-low-regularity-stability}, which states that $C_c^{\infty}(\R^n)$ is dense in $M(H^{s-\delta}\to H^{-s+\delta})\cap \cdistr(\R^n)$ if the later space is endowed with the norm inherited from $M(H^s\to H^{-s})$.  If $f\in M(H^s\rightarrow H^{-s})$ and $u,v \in C_c^\infty(\mathbb R^n)$ are both nonvanishing, we have the multiplier inequality
\begin{equation}\label{multiplier-inequality}
    \abs{\langle f,uv\rangle} \leq \|f\|_{s,-s}\norm{u}_{H^s(\R^n)} \norm{v}_{H^{s}(\R^n)}.
\end{equation}

By density of $C_c^\infty(\R^n) \times C_c^\infty(\R^n)$ in $ H^s(\R^n) \times H^{s}(\R^n)$, there is a unique continuous extension $(u,v) \mapsto \langle f, uv \rangle$ for $(u,v)\in H^s(\R^n)\times H^{s}(\R^n)$. More precisely, each $f \in M(H^s\rightarrow H^{-s})$ gives rise to a linear multiplication map $m_f \colon H^s(\R^n) \rightarrow H^{-s}(\R^n)$ defined by 
\begin{equation}
\label{eq: multiplication by sobolev multiplier}
    \langle m_f(u),v \rangle \vcentcolon = \lim_{i \to \infty}\langle f,u_iv_i \rangle \quad \mbox{for all} \quad (u,v)\in H^s(\R^n)\times H^{s}(\R^n),
\end{equation}
where $(u_i,v_i) \in C_c^\infty(\R^n) \times C_c^\infty(\R^n)$ is any sequence in $H^s(\R^n) \times H^{s}(\R^n)$ converging to $(u,v)$. We will just write $fu$ instead of $m_f(u)$.

\subsection{Direct problem}
\begin{lemma}[{Definition of bilinear forms and conductivity matrix (cf.~\cite[Lemma 8.3]{RZ2022unboundedFracCald})}]\label{prop: bilinear forms conductivity eq}
    Let $\Omega\subset\R^n$ be an open set, $0<s<1$, $\gamma\in L^{\infty}(\R^n)$ and define the conductivity matrix associated to $\gamma$ by
    \begin{equation}
    \label{eq: conductivity matrix}
        \Theta_{\gamma}\colon \R^{2n}\to \R^{n\times n},\quad \Theta_{\gamma}(x,y)\vcentcolon =\gamma^{1/2}(x)\gamma^{1/2}(y)\mathbf{1}_{n\times n}
    \end{equation}
    for $x,y\in\R^n$. Then the map defined by
    \begin{equation}
    \label{eq: conductivity bilinear form}
        B_{\gamma}\colon H^s(\R^n)\times H^s(\R^n)\to \R,\quad B_{\gamma}(u,v)\vcentcolon =\int_{\R^{2n}}\Theta_{\gamma}\nabla^su\cdot\nabla^sv\,dxdy
    \end{equation}
    is continuous bilinear form. 
\end{lemma}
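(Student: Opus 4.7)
The plan is to verify bilinearity directly and then obtain continuity from a pointwise bound on the conductivity matrix combined with the Cauchy--Schwarz inequality and the $H^s \to L^2(\R^{2n};\R^n)$ mapping property of the fractional gradient recorded in \eqref{eq: bound on fractional gradient}.

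First I would observe that bilinearity of $B_\gamma$ is an immediate consequence of the linearity of $\nabla^s \colon H^s(\R^n) \to L^2(\R^{2n};\R^n)$ together with linearity of the Lebesgue integral: for all $u,v,w \in H^s(\R^n)$ and $\lambda,\mu \in \R$ we have $\nabla^s(\lambda u + \mu w) = \lambda \nabla^s u + \mu \nabla^s w$ pointwise a.e. on $\R^{2n}$, and multiplying by the fixed symmetric matrix-valued kernel $\Theta_\gamma(x,y)$ and integrating against $\nabla^s v$ yields $B_\gamma(\lambda u + \mu w, v) = \lambda B_\gamma(u,v) + \mu B_\gamma(w,v)$; linearity in the second slot is analogous.

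For continuity, the key observation is that since $\gamma \in L^\infty(\R^n)$ is uniformly elliptic, we have the pointwise bound
\begin{equation*}
    0 \leq \gamma^{1/2}(x)\gamma^{1/2}(y) \leq \|\gamma\|_{L^\infty(\R^n)} \qquad \text{for a.e.\ } (x,y) \in \R^{2n}.
\end{equation*}
Since $\Theta_\gamma = \gamma^{1/2}(x)\gamma^{1/2}(y)\mathbf{1}_{n\times n}$, this gives $|\Theta_\gamma(x,y)\xi \cdot \eta| \leq \|\gamma\|_{L^\infty(\R^n)} |\xi||\eta|$ for all $\xi,\eta \in \R^n$. Applying this with $\xi = \nabla^s u(x,y)$ and $\eta = \nabla^s v(x,y)$, then using the Cauchy--Schwarz inequality in $L^2(\R^{2n};\R^n)$ and the estimate \eqref{eq: bound on fractional gradient}, I obtain
\begin{equation*}
    |B_\gamma(u,v)| \leq \|\gamma\|_{L^\infty(\R^n)} \|\nabla^s u\|_{L^2(\R^{2n})} \|\nabla^s v\|_{L^2(\R^{2n})} \leq \|\gamma\|_{L^\infty(\R^n)} \|u\|_{H^s(\R^n)} \|v\|_{H^s(\R^n)}
\end{equation*}
for all $u,v \in H^s(\R^n)$, which is precisely the continuity of the bilinear form on $H^s(\R^n) \times H^s(\R^n)$.

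There is no substantial obstacle here: the argument is routine once one has the $L^\infty$ bound on $\Theta_\gamma$ and the known continuity of $\nabla^s$. The only minor point to check is measurability of the integrand, which follows from measurability of $\gamma^{1/2}$ on $\R^n$ and the explicit formula for $\nabla^s u$ as a measurable function on $\R^{2n}$.
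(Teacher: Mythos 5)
Your proof is correct and is exactly the standard argument: pointwise bound $\gamma^{1/2}(x)\gamma^{1/2}(y) \leq \|\gamma\|_{L^\infty(\R^n)}$, Cauchy--Schwarz in $L^2(\R^{2n};\R^n)$, and the continuity of $\nabla^s$ recorded in \eqref{eq: bound on fractional gradient}. The paper itself does not reproduce a proof but delegates to the cited reference, and this is evidently the intended reasoning; there is nothing to add.
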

\begin{remark}
    If no confusion can arise, we will drop the subscript $\gamma$ in the definition for the conductivity matrix $\Theta_{\gamma}$.
\end{remark}

\begin{definition}[Weak solutions]
    Let $\Omega\subset\R^n$ be an open set, $0<s<1$ and $\gamma\in L^{\infty}(\R^n)$ with conductivity matrix $\Theta\colon \R^{2n}\to \R^{n\times n}$. If $f\in H^s(\R^n)$ and $F\in (\widetilde{H}^s(\Omega))^*$, then we say that $u\in H^s(\R^n)$ is a weak solution to the fractional conductivity equation
     \[
     \begin{split}
        \Div_s(\Theta\nabla^s u)&= F\quad\text{in}\quad\Omega,\\
        u&= f\quad\text{in}\quad\Omega_e
     \end{split}
     \]
    if there holds
    \[
        B_{\gamma}(u,\phi)=F(\phi)\quad\text{and}\quad u-f\in\widetilde{H}^s(\Omega)
    \]
    for all $\phi\in \widetilde{H}^s(\Omega)$. 
\end{definition}

\begin{lemma}[{Well-posedness and DN maps (cf.~\cite[Lemma 8.10]{RZ2022unboundedFracCald})}]
\label{lemma: well-posedness results and DN maps}
    Let $\Omega\subset \R^n$ be an open set which is bounded in one direction and $0<s<1$. Assume that $\gamma\in L^{\infty}(\R^n)$ with conductivity matrix $\Theta$ satisfies $\gamma \geq \gamma_0 > 0$. Then the following assertions hold:
    \begin{enumerate}[(i)]
        \item\label{item 1 well-posedness cond eq} For all $f\in X\vcentcolon = H^s(\R^n)/\widetilde{H}^s(\Omega)$ there is a unique weak solution $u_f\in H^s(\R^n)$ of the fractional conductivity equation
        \begin{equation}
        \begin{split}
            \Div_s(\Theta\nabla^s u)&= 0\quad\text{in}\quad\Omega,\\
            u&= f\quad\text{in}\quad\Omega_e.
        \end{split}
        \end{equation}
        \item\label{item 2 well-posedness cond eq} The exterior DN map $\Lambda_{\gamma}\colon X\to X^*$ given by 
        \[
        \begin{split}
            \langle \Lambda_{\gamma}f,g\rangle \vcentcolon =B_{\gamma}(u_f,g),
        \end{split}
        \]
        where $u_f\in H^s(\R^n)$ is the unique solution to the fractional conductivity equation with exterior value $f$, is a well-defined bounded linear map. 
    \end{enumerate}
\end{lemma}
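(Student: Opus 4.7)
The plan is to apply a standard Lax--Milgram argument, using ellipticity together with the Poincaré inequality on domains bounded in one direction that was recalled in the preliminaries.

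For the well-posedness part (i), I fix a representative $f\in H^s(\R^n)$ of the equivalence class and look for the solution in the form $u=v+f$ with $v\in\widetilde{H}^s(\Omega)$. Under this ansatz, the problem becomes: find $v\in\widetilde{H}^s(\Omega)$ such that $B_\gamma(v,\phi)=-B_\gamma(f,\phi)$ for every $\phi\in \widetilde{H}^s(\Omega)$. The right hand side is a bounded linear functional on $\widetilde{H}^s(\Omega)$ because $B_\gamma$ is continuous on $H^s(\R^n)\times H^s(\R^n)$ by Lemma \ref{prop: bilinear forms conductivity eq}. For the left hand side, the uniform ellipticity $\gamma\geq \gamma_0>0$ implies $\gamma^{1/2}(x)\gamma^{1/2}(y)\geq \gamma_0$ pointwise, and hence
\begin{equation}
B_\gamma(v,v)\geq \gamma_0\|\nabla^s v\|_{L^2(\R^{2n})}^2=\gamma_0\|(-\Delta)^{s/2}v\|_{L^2(\R^n)}^2
\end{equation}
by the identity in \eqref{eq: bound on fractional gradient}. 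Since $\Omega$ is bounded in one direction, the $L^2$ Poincaré inequality stated in the preliminaries applies to $v\in\widetilde{H}^s(\Omega)$ by an approximation argument with $C_c^\infty(\Omega)$-functions, giving $\|v\|_{L^2(\R^n)}\leq C\|(-\Delta)^{s/2}v\|_{L^2(\R^n)}$. Combining these yields $B_\gamma(v,v)\geq c\|v\|_{H^s(\R^n)}^2$ on $\widetilde{H}^s(\Omega)$, so $B_\gamma$ is coercive there. Lax--Milgram then delivers a unique $v\in\widetilde{H}^s(\Omega)$, and hence a unique weak solution $u_f=v+f$. To see that $u_f$ depends only on the equivalence class of $f$ in $X$, note that if $f-f'\in\widetilde{H}^s(\Omega)$, then $u_f-f'\in\widetilde{H}^s(\Omega)$ and $u_f$ is a weak solution with exterior data $f'$; uniqueness then gives $u_f=u_{f'}$.

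For part (ii), I first check that $\Lambda_\gamma$ is well-defined on the quotient. If $g,g'\in H^s(\R^n)$ represent the same class in $X$, then $g-g'\in \widetilde{H}^s(\Omega)$ and the equation $B_\gamma(u_f,g-g')=0$ gives $B_\gamma(u_f,g)=B_\gamma(u_f,g')$; independence of the representative of $f$ follows from the last observation in the previous paragraph. Boundedness is straightforward: from the Lax--Milgram estimate one gets $\|v\|_{H^s(\R^n)}\leq C\|f\|_{H^s(\R^n)}$, hence $\|u_f\|_{H^s(\R^n)}\leq C\|f\|_{H^s(\R^n)}$, and since this holds for every representative of the class we conclude $\|u_f\|_{H^s(\R^n)}\leq C\|f\|_X$. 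Combining this with the continuity of $B_\gamma$,
\begin{equation}
|\langle \Lambda_\gamma f,g\rangle|=|B_\gamma(u_f,g)|\leq C\|u_f\|_{H^s(\R^n)}\|g\|_{H^s(\R^n)}\leq C\|f\|_X\|g\|_{H^s(\R^n)},
\end{equation}
and taking the infimum over representatives of $g$ gives $|\langle \Lambda_\gamma f,g\rangle|\leq C\|f\|_X\|g\|_X$, so $\Lambda_\gamma\in\mathcal L(X,X^*)$.

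The only nontrivial analytic input is the Poincaré inequality for domains bounded in one direction in the range $0<s<1$, $p=2$, which is supplied in the preliminaries and is the reason the boundedness assumption on $\Omega$ enters. Everything else is a direct adaptation of the argument for Lemma 8.10 in \cite{RZ2022unboundedFracCald}, so no further obstacle is expected.
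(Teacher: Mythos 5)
Your proposal is correct and follows the same standard Lax--Milgram argument that underlies \cite[Lemma 8.10]{RZ2022unboundedFracCald}, which the paper cites without reproving: coercivity of $B_\gamma$ on $\widetilde{H}^s(\Omega)$ from the pointwise lower bound $\gamma^{1/2}(x)\gamma^{1/2}(y)\geq\gamma_0$ combined with the fractional Poincaré inequality on domains bounded in one direction, plus the routine quotient-space bookkeeping for well-definedness and boundedness of $\Lambda_\gamma$. Nothing is missing.
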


\begin{remark}
    We note that the additional condition $m\in H^{2s,\frac{n}{2s}}(\R^n)$ in \cite[Lemma 8.10]{RZ2022unboundedFracCald} were only needed to establish the related assertions for the fractional Schr\"odinger equation.
\end{remark}

\section{Low regularity uniqueness for the inverse fractional conductivity problem}
\label{sec:lowregUniqueness}

\subsection{Low regularity exterior determination} We need to establish an exterior determination result for the fractional Calderón problem. We prove a generalization of \cite[Theorem 1.2]{RGZ2022GlobalUniqueness} without any Sobolev regularity for the conductivities. We will be very brief in the proof since the proof of \cite[Theorem 1.2]{RGZ2022GlobalUniqueness} holds almost identically. The only concern is to check that the required elliptic energy estimates (see~\cite[Corollary 5.4]{RGZ2022GlobalUniqueness}) are still valid for the related exterior value problems. We also remark that one does not have to reduce this estimate to the similar property of Schrödinger equations (via the Liouville reduction) as done earlier in \cite{RGZ2022GlobalUniqueness}.

\begin{lemma}
\label{lemma: elliptic estimate}
Let $0<s<1$. Suppose that $\Omega\subset\R^n$ is an open set which is bounded in one direction, $W\subset\Omega_e$ an open set with finite measure and positive distance from $\Omega$. Assume that $\gamma\in L^{\infty}(\R^n)$ with conductivity matrix $\Theta$ satisfies $\gamma(x)\geq \gamma_0>0$. Then for any $f\in C_c^{\infty}(W)$ the associated unique solution $u_f\in H^s(\R^n)$ of
    \begin{equation}
    \label{eq: conductivity equation elliptic estimate}
     \begin{split}
        \Div_s(\Theta\nabla^s u)&= 0\quad\text{in}\quad\Omega,\\
        u&= f\quad\text{in}\quad\Omega_e
     \end{split}
     \end{equation}
     satisfies the estimate
     \[
        \|u_f-f\|_{H^s(\R^n)}\leq C\|f\|_{L^2(W)}
     \]
     for some $C>0$ depending only on $n,s,\gamma_0,\|\gamma\|_{L^{\infty}(\Omega\cup W)},|W|$, the distance between $W$ and $\Omega$, and the Poincar\'e constant of $\Omega$.
\end{lemma}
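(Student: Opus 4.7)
The plan is to test the equation against $v \vcentcolon = u_f - f$ and exploit the disjointness of the supports of $f$ and $v$ to bound the right-hand side of the energy identity by $\|f\|_{L^2(W)}$ rather than by any Sobolev norm of $f$.

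First, since $u_f = f$ in $\Omega_e$ we have $v \in \widetilde{H}^s(\Omega)$, and $v$ is a weak solution of
\[
B_\gamma(v,\phi) = -B_\gamma(f,\phi) \quad \text{for all } \phi \in \widetilde{H}^s(\Omega),
\]
as follows from the weak formulation of \eqref{eq: conductivity equation elliptic estimate}. Testing with $\phi = v$ and invoking the pointwise lower bound $\gamma^{1/2}(x)\gamma^{1/2}(y) \geq \gamma_0$ on the kernel of $\Theta_\gamma$ gives the coercivity estimate
\[
\gamma_0 \|(-\Delta)^{s/2} v\|_{L^2(\R^n)}^2 \leq B_\gamma(v,v) = -B_\gamma(f,v).
\]
Because $v \in \widetilde{H}^s(\Omega)$ with $\Omega$ bounded in one direction, the Poincar\'e inequality recalled in Section~\ref{sec:preliminaries} yields $\|v\|_{H^s(\R^n)}^2 \leq C(1+C_P^2)\gamma_0^{-1} B_\gamma(v,v)$, so it remains to control $|B_\gamma(f,v)|$ by $\|f\|_{L^2(W)}\|v\|_{L^2(\R^n)}$.

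Next I would use the separation between $\spt f \subset W$ and $\spt v \subset \overline{\Omega}$ to reduce $B_\gamma(f,v)$ to an off-diagonal integral. For $(x,y) \in W \times W$ the factor $v(x)-v(y)$ vanishes, while for $(x,y) \in \Omega \times \Omega$ the factor $f(x)-f(y)$ vanishes. The only non-zero contributions come from $(x,y) \in (W \times \Omega) \cup (\Omega \times W)$, and by the symmetry $|x-y|^{-(n+2s)}$ under swap we obtain
\[
B_\gamma(f,v) = -C_{n,s} \int_W \int_\Omega \frac{\gamma^{1/2}(x)\gamma^{1/2}(y)}{|x-y|^{n+2s}} f(x) v(y) \,dy\,dx.
\]
On this product set we have $|x-y| \geq d \vcentcolon = \dist(W,\Omega) > 0$, so Cauchy--Schwarz in $y$ gives
\[
\int_\Omega \frac{|v(y)|}{|x-y|^{n+2s}}\, dy \leq \left(\int_{|x-y|\geq d} \frac{dy}{|x-y|^{2(n+2s)}}\right)^{1/2} \|v\|_{L^2(\R^n)} \leq C(n,s,d)\,\|v\|_{L^2(\R^n)},
\]
since the remaining radial integral converges. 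Combining with $\|f\|_{L^1(W)} \leq |W|^{1/2}\|f\|_{L^2(W)}$ and the uniform bound $\|\gamma\|_{L^\infty(\Omega\cup W)}$, we arrive at
\[
|B_\gamma(f,v)| \leq C(n,s,d,|W|,\|\gamma\|_{L^\infty(\Omega\cup W)})\, \|f\|_{L^2(W)} \|v\|_{L^2(\R^n)}.
\]

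Putting these ingredients together,
\[
\|v\|_{H^s(\R^n)}^2 \leq C\,\gamma_0^{-1} |B_\gamma(f,v)| \leq C'\,\|f\|_{L^2(W)}\,\|v\|_{H^s(\R^n)},
\]
and cancelling one factor of $\|v\|_{H^s(\R^n)}$ gives the claim. The main observation, and what makes the estimate possible with only $L^2$-control of $f$, is the support-disjointness argument that localizes $B_\gamma(f,v)$ to $W \times \Omega$ where the singular kernel is actually a bounded $L^2$-kernel; no smoothness of $f$ or Liouville-type reduction is required. There is no real obstacle beyond being careful that Poincar\'e applies in $\widetilde{H}^s(\Omega)$ (which is already recalled in the preliminaries) and that the off-diagonal radial integral in $d$ converges.
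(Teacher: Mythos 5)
Your proof is correct and takes essentially the same approach as the paper: both arguments exploit the disjointness of $\supp f \subset W$ and $\supp(u_f-f) \subset \overline{\Omega}$ to reduce $B_\gamma(f,\cdot)$ to an off-diagonal integral over $W\times\Omega$ where $|x-y|\geq d>0$ makes the kernel square-integrable, then bound it by $|W|^{1/2}\|f\|_{L^2(W)}$ via Cauchy--Schwarz. The only cosmetic difference is that you spell out the coercivity-plus-Poincar\'e energy estimate explicitly, whereas the paper cites the a priori estimate $\|u_f-f\|_{H^s(\R^n)}\leq C\|B_{\gamma}(f,\cdot)\|_{(\widetilde{H}^s(\Omega))^*}$ from the well-posedness lemma as a black box.
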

\begin{proof}
    By~\cite[Lemma 8.10]{RZ2022unboundedFracCald} the unique solution $u_f\in H^s(\R^n)$ to \eqref{eq: conductivity equation elliptic estimate} satisfies the estimate
    \[
        \|u_f-f\|_{H^s(\R^n)}\leq C\|B_{\gamma}(f,\cdot)\|_{(\widetilde{H}^s(\Omega))^*}.
    \]
    Therefore, by the definition of $B_{\gamma}$, $\supp(f)\subset\Omega_e$ and the support argument carried out in detail in \cite[Proof of Lemma 5.3]{RGZ2022GlobalUniqueness} we have
    \[
    \begin{split}
        |B_{\gamma}(f,\phi)|&=\frac{C_{n,s}}{2}\left|\int_{\R^{2n}}\gamma^{1/2}(x)\gamma^{1/2}(y)\frac{(f(x)-f(y))(\phi(x)-\phi(y))}{|x-y|^{n+2s}}\,dxdy\right|\\
        &=C_{n,s}\left|\int_{W\times\Omega}\gamma^{1/2}(x)\gamma^{1/2}(y)\frac{f(y)\phi(x)}{|x-y|^{n+2s}}\,dxdy\right|\\
        &=C_{n,s}\left|\int_{W\times\Omega}\frac{(\gamma^{1/2}(y)f(y))(\gamma^{1/2}(x)\phi(x))}{|x-y|^{n+2s}}\,dxdy\right|.
    \end{split}
    \]
    for all $\phi\in \widetilde{H}^s(\Omega)$. 
    
    Following \cite[Proof of Lemma 5.3]{RGZ2022GlobalUniqueness}, we obtain by the Cauchy--Schwartz and the Minkowski inequality the estimate
    \[
    \begin{split}
        &|B_{\gamma}(f,\phi)|\leq  C_{n,s}\sqrt{\frac{\omega_n}{(n+4s)r^{n+4s}}}|W|^{1/2}\|\gamma^{1/2}f\|_{L^2(W)}\|\gamma^{1/2}\phi\|_{L^2(\Omega)}\\
        &\leq C_{n,s}\|\gamma\|^{1/2}_{L^{\infty}(\Omega)}\|\gamma\|_{L^{\infty}(W)}^{1/2}\sqrt{\frac{\omega_n}{(n+4s)r^{n+4s}}}|W|^{1/2}\|f\|_{L^2(W)}\|\phi\|_{L^2(\Omega)},
    \end{split}
    \]
    where $\omega_n$ denotes the Lebesgue measure of the unit ball. This shows
    \[
        \|u_f-f\|_{H^s(\R^n)}\leq CC_{n,s}\|\gamma\|^{1/2}_{L^{\infty}(\Omega)}\|\gamma\|_{L^{\infty}(W)}^{1/2}\sqrt{\frac{\omega_n}{(n+4s)r^{n+4s}}}|W|^{1/2}\|f\|_{L^2(W)}.
    \]
\end{proof}

\begin{theorem}[Exterior determination]
\label{thm: exterior determination}
    Let $\Omega\subset \R^n$ be an open set which is bounded in one direction and $0<s<1$. Assume that $\gamma_1,\gamma_2\in L^{\infty}(\R^n)$ satisfy $\gamma_1(x),\gamma_2(x)\geq \gamma_0>0$. 
    Suppose that $W \subset \Omega_e$ is a nonempty open set such that $\gamma_1,\gamma_2$ are continuous a.e. in $W$.
If $\left.\Lambda_{\gamma_1}f\right|_{W}=\left.\Lambda_{\gamma_2}f\right|_{W}$ for all $f\in C_c^{\infty}(W)$, then $\gamma_1=\gamma_2$ a.e. in $W$.
\end{theorem}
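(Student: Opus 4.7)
The plan is to adapt the concentration argument of \cite[Theorem 1.2]{RGZ2022GlobalUniqueness} but now working directly with the bilinear form $B_\gamma$ rather than going through the Schrödinger picture. Fix any Lebesgue point $x_0\in W$ at which both $\gamma_1$ and $\gamma_2$ are continuous; such points have full measure in $W$. Since $x_0\in\Omega_e=\R^n\setminus\overline{\Omega}$, we have $d_0\vcentcolon=\dist(x_0,\overline{\Omega})>0$, so we may choose $r_0>0$ small enough that $B_{r_0}(x_0)\subset W$ and $\dist(B_{r_0}(x_0),\overline{\Omega})\geq d_0/2$. Fix once and for all a nontrivial $\phi\in C_c^\infty(B_1(0))$ and define the rescaled test functions
\[
    f_r(x)\vcentcolon = r^{-(n-2s)/2}\,\phi\!\left(\tfrac{x-x_0}{r}\right),\qquad 0<r<r_0.
\]
A direct Fourier computation gives $\|(-\Delta)^{s/2}f_r\|_{L^2(\R^n)}=\|(-\Delta)^{s/2}\phi\|_{L^2(\R^n)}$, while $\|f_r\|_{L^2(\R^n)}^2 = r^{2s}\|\phi\|_{L^2(\R^n)}^2\to 0$ as $r\to 0^+$.

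\textbf{Alessandrini-type identity and error terms.} Let $u_r^{(j)}\in H^s(\R^n)$ be the solution of \eqref{eq: conductivity equation elliptic estimate} with conductivity $\gamma_j$ and exterior value $f_r$, and put $w_r^{(j)}\vcentcolon = u_r^{(j)}-f_r\in\widetilde{H}^s(\Omega)$. Since $B_{\gamma_j}(u_r^{(j)},\phi)=0$ for every $\phi\in\widetilde{H}^s(\Omega)$, symmetry of $B_{\gamma_j}$ yields
\[
    \langle\Lambda_{\gamma_j}f_r,f_r\rangle = B_{\gamma_j}(u_r^{(j)},f_r)=B_{\gamma_j}(f_r,f_r)+B_{\gamma_j}(f_r,w_r^{(j)}).
\]
Because $f_r\in C_c^\infty(W)$ and $\Lambda_{\gamma_1}f_r|_W=\Lambda_{\gamma_2}f_r|_W$, subtracting gives
\[
    B_{\gamma_1}(f_r,f_r)-B_{\gamma_2}(f_r,f_r) = B_{\gamma_2}(f_r,w_r^{(2)})-B_{\gamma_1}(f_r,w_r^{(1)}).
\]
The bilinear form satisfies $|B_{\gamma_j}(u,v)|\leq \|\gamma_j\|_{L^\infty(\R^n)}\|\nabla^s u\|_{L^2(\R^{2n})}\|\nabla^s v\|_{L^2(\R^{2n})}$, so combining \eqref{eq: bound on fractional gradient} with the elliptic estimate of Lemma~\ref{lemma: elliptic estimate} (applicable thanks to our choice of $r_0$) gives
\[
    |B_{\gamma_j}(f_r,w_r^{(j)})|\leq C\|f_r\|_{H^s(\R^n)}\|w_r^{(j)}\|_{H^s(\R^n)}\leq C\|f_r\|_{H^s(\R^n)}\|f_r\|_{L^2(W)}\leq Cr^s,
\]
so the right-hand side of the displayed identity is $O(r^s)$.

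\textbf{Extracting the pointwise value of $\gamma_j$.} The main step is to compute the limit of $B_{\gamma_j}(f_r,f_r)$. Given $R\in(0,r_0)$, split
\[
    B_{\gamma_j}(f_r,f_r)=\frac{C_{n,s}}{2}\Bigl(\iint_{B_R(x_0)\times B_R(x_0)}+\iint_{(\R^n\times\R^n)\setminus(B_R(x_0)\times B_R(x_0))}\Bigr)\frac{\gamma_j^{1/2}(x)\gamma_j^{1/2}(y)|f_r(x)-f_r(y)|^2}{|x-y|^{n+2s}}\,dxdy.
\]
For $r<R/2$, in the outer piece we have $f_r(x)f_r(y)=0$ (the support of $f_r$ lies in $B_{R/2}(x_0)$), hence $|f_r(x)-f_r(y)|^2$ reduces to $f_r(x)^2$ or $f_r(y)^2$; since the other point lies outside $B_R(x_0)$ and the support point lies in $B_r(x_0)$, the kernel is bounded by $(R/2)^{-(n+2s)}$, which yields an estimate of the outer piece by $C\|\gamma_j\|_\infty\|f_r\|_{L^2}^2 R^{-2s}\leq Cr^{2s}R^{-2s}$. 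In the inner piece, continuity of $\gamma_j$ at $x_0$ gives $\gamma_j^{1/2}(x)\gamma_j^{1/2}(y)=\gamma_j(x_0)+\varepsilon_R(x,y)$ with $\sup_{x,y\in B_R}|\varepsilon_R|\to 0$ as $R\to 0^+$. Letting first $r\to 0^+$ with $R$ fixed, then $R\to 0^+$, and using that $\iint_{B_R\times B_R}\frac{|f_r(x)-f_r(y)|^2}{|x-y|^{n+2s}}dxdy\to \frac{2}{C_{n,s}}\|(-\Delta)^{s/2}\phi\|_{L^2(\R^n)}^2$, we obtain
\[
    \lim_{r\to 0^+}B_{\gamma_j}(f_r,f_r)=\gamma_j(x_0)\,\|(-\Delta)^{s/2}\phi\|_{L^2(\R^n)}^2.
\]

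\textbf{Conclusion.} Combining the two previous steps,
\[
    (\gamma_1(x_0)-\gamma_2(x_0))\,\|(-\Delta)^{s/2}\phi\|_{L^2(\R^n)}^2 = \lim_{r\to 0^+}\bigl(B_{\gamma_1}(f_r,f_r)-B_{\gamma_2}(f_r,f_r)\bigr)=0,
\]
and since $\phi$ was chosen nontrivial, $\gamma_1(x_0)=\gamma_2(x_0)$. As this holds at every common point of continuity of $\gamma_1,\gamma_2$ in $W$, and such points exhaust $W$ up to a null set, we conclude $\gamma_1=\gamma_2$ a.e.\ in $W$. The main technical obstacle is the careful analysis of the nonlocal tail in $B_{\gamma_j}(f_r,f_r)$: one must verify that the part of the integral where $|x-y|$ is not small gives a contribution that vanishes faster than the concentrated local part, which is what the above two-parameter limit ($r\to 0$ then $R\to 0$) accomplishes.
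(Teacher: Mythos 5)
Your proof is correct and rests on the same core mechanism as the paper's: concentrate a family of test functions at a point of continuity $x_0\in W$, show the solution's deviation from the exterior datum is negligible via the energy estimate of Lemma~\ref{lemma: elliptic estimate}, and exploit the continuity of $\gamma_j$ at $x_0$ to read off the pointwise value from the DN map. The implementation, however, is noticeably different and arguably more self-contained. The paper invokes an abstract concentrating sequence $(\phi_N)\subset C_c^\infty(W)$ constructed in \cite[Remark~5.6]{RGZ2022GlobalUniqueness} with the normalization $\|\phi_N\|^2_{L^2}+\|(-\Delta)^{s/2}\phi_N\|^2_{L^2}=1$, and then uses the fractional gradient product rule $\nabla^s(uv)(x,y)=v(y)\nabla^su(x,y)+u(x)\nabla^sv(x,y)$ to peel off the factors $\gamma_i^{1/2}(x),\gamma_i^{1/2}(y)$ one at a time and conclude $\gamma_i(x_0)=\lim_N E_{\gamma_i}(\phi_N)$. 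You instead construct an explicit scaling family $f_r(x)=r^{-(n-2s)/2}\phi((x-x_0)/r)$, which gives the normalization $\|(-\Delta)^{s/2}f_r\|_{L^2}=\|(-\Delta)^{s/2}\phi\|_{L^2}$ and $\|f_r\|_{L^2}=r^s\|\phi\|_{L^2}$ by direct Fourier scaling, and replace the product-rule manipulation with a hands-on local/tail split of the double integral over $B_R(x_0)\times B_R(x_0)$ versus its complement, estimating the tail by $O(r^{2s}R^{-2s})$ and the local piece by uniform continuity of $\gamma_j$ near $x_0$. Your decomposition $\langle\Lambda_{\gamma_j}f_r,f_r\rangle=B_{\gamma_j}(f_r,f_r)+B_{\gamma_j}(f_r,w_r^{(j)})$ is algebraically equivalent to the paper's energy decomposition, just one integration-by-parts away. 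What your route buys is explicitness — everything is computable with a fixed bump $\phi$ and no reliance on the earlier construction — at the cost of having to track the two-parameter limit $r\to0$ then $R\to0$ and the nonlocal tail by hand. Both arguments need only the elliptic estimate of Lemma~\ref{lemma: elliptic estimate} and a.e.\ continuity of the conductivities in $W$, and neither invokes the Liouville reduction, which is exactly the point of this exterior determination statement in the low-regularity setting. One minor remark: you should be a bit careful that the supremum of $\varepsilon_R$ over $B_R\times B_R$ is to be interpreted as an essential supremum against the representative of $\gamma_j$ that is continuous at $x_0$, but this is how the paper handles it too, so there is no gap.
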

\begin{proof} 
    First choose a set $\mathcal{N}\subset W$ such that $|\mathcal{N}|=0$ and $\gamma_1,\gamma_2$ are continuous on $V\vcentcolon =W\setminus \mathcal{N}$. Let $x_0\in V$. Next note that \cite[Lemma 5.5]{RGZ2022GlobalUniqueness} holds for any $0<s<1$ and therefore by \cite[Remark 5.6]{RGZ2022GlobalUniqueness} there exists a sequence $(\phi_N)_{N\in\N}\subset C_c^{\infty}(W)$ such that 
    \begin{enumerate}[(i)]
        \item\label{item 1: normalization} $\|\phi_N\|_{L^2(\R^n)}^2+\|(-\Delta)^{s/2}\phi_N\|_{L^2(\R^n)}^2=1$ for all $N\in\N$, 
        \item \label{item 2: estimate} $\|\phi_N\|_{L^2(\R^n)}\to 0$ as $N\to\infty$,
        \item \label{item 3: support} $\supp(\phi_N)\to \{x_0\}$
        as $N\to\infty$. 
    \end{enumerate}
    
    By the product rule for the fractional gradient 
    \[
        \nabla^s(uv)(x,y)=v(y)\nabla^su(x,y)+u(x)\nabla^sv(x,y)
    \]
    for all $u,v\in H^s(\R^n)$ and \ref{item 2: estimate} one obtains (cf.~\cite[eq.~(18)]{RGZ2022GlobalUniqueness})
    \[
    \begin{split}
        &\limsup_{N\to\infty}\left|\int_{\R^{2n}}(\gamma^{1/2}_i(y)-\gamma_i^{1/2}(x_0))g(x)|\nabla^s\phi_N|^2\,dxdy\right|\\
        &\leq C\|g\|_{L^{\infty}(\R^n)}^{1/2}\|\gamma_i^{1/2}-\gamma_i^{1/2}(x_0)\|_{L^{\infty}(Q_{1/M}(x_0))}
    \end{split}
    \]
    for any bounded function $g\in L^{\infty}(\R^n)$, $M\in\N$ and $i=1,2$. Here the cube $Q_r(x)$ is given for all $x\in\R^n$, $r>0$ by
    \[
        Q_r(x)\vcentcolon =\{y\in\R^n;\,\,|y_j-x_j|<r\,\,\text{for all}\, j=1,\ldots,n\}.
    \] 
    
    By the assumptions the conductivities $\gamma_1,\gamma_2$ are continuous at $x_0$ and therefore passing to the limit $M\to\infty$ gives
    \begin{equation}
    \label{eq: limit}
        \lim_{N\to\infty}\int_{\R^{2n}}(\gamma^{1/2}_i(y)-\gamma_i^{1/2}(x_0))g(x)|\nabla^s\phi_N|^2\,dxdy=0
    \end{equation}
    for any $g\in L^{\infty}(\R^n)$ and $i=1,2$. This ensures
    \begin{equation}
    \label{eq: reconstruction}
    \begin{split}
        \gamma_i(x_0)
        =&\lim_{N\to\infty}\int_{\R^{2n}}\gamma_i^{1/2}(x)(\gamma_i^{1/2}(y)-\gamma_i^{1/2}(x_0))|\nabla^s\phi_N|^2\,dxdy\\
        &+\gamma_i^{1/2}(x_0)\lim_{N\to\infty}\int_{\R^{2n}}(\gamma_i^{1/2}(x)-\gamma_i^{1/2}(x_0))|\nabla^s\phi_N|^2\,dxdy\\
        &+\gamma_i(x_0)\lim_{N\to\infty}\int_{\R^{2n}}|\nabla^s\phi_N|^2\,dxdy\\
        =&\lim_{N\to\infty}\int_{\R^{2n}}\gamma_i^{1/2}(x)\gamma_i^{1/2}(y)|\nabla^s\phi_N|^2\,dxdy\\
       =&\lim_{N\to\infty}\langle \Theta_{\gamma_i}\nabla^s\phi_N,\nabla^s\phi_N\rangle_{L^2(\R^{2n})}\\
       =&\lim_{N\to\infty} E_{\gamma_i}(\phi_N)
    \end{split}
    \end{equation}
    for $i=1,2$. In the first equality sign, we used the equation \eqref{eq: limit} for $g=\gamma_i$ and $g=1$ as well as $\lim_{N\to\infty}\|\nabla^s\phi_N\|^2_{L^2(\R^n)}=1$, which follows from the properties \ref{item 1: normalization} and \ref{item 2: estimate}. 
    
    As in \cite[eq.~(20)]{RGZ2022GlobalUniqueness} we have
    \begin{equation}
    \label{eq: decomposition energy}
    \begin{split}
       E_{\gamma_i}(u_N^i)
        &= E_{\gamma_i}(u_N^i-\phi_N) + 2\langle\Theta_{\gamma_i} \nabla^s\phi_N,\nabla^s(u_N^i-\phi_N)\rangle_{L^2(\R^{2n})}+ E_{\gamma_i}(\phi_N)
    \end{split}
    \end{equation}
    for all $N\in\N$, $i=1,2$, where $u_N^i\in H^s(\R^n)$ is the unique solution to the homogeneous fractional conductivity equation with conductivity $\gamma_i$ and exterior value $\phi_N$. By Lemma~\ref{lemma: elliptic estimate} and \ref{item 2: estimate} we deduce $\|u_N-\phi_N\|_{H^s(\R^n)}\to 0$ as $N\to\infty$, which in turn implies
    \[
        \lim_{N\to\infty}E_{\gamma_i}(u_N^i)=\lim_{N\to\infty}E_{\gamma_i}(\phi_N).
    \]
    Therefore, by \eqref{eq: reconstruction} there holds
    \[
        \gamma_i(x_0)=\lim_{N\to\infty}E_{\gamma_i}(u_N^i)=\lim_{N\to\infty}B_{\gamma_i}(u_{N}^i,\phi_N)=\langle \Lambda_{\gamma_i} \phi_N,\phi_N\rangle
    \]
    for all $x_0\in V$, where in the second and third equality we used \ref{item 2 well-posedness cond eq} of Lemma~\ref{lemma: well-posedness results and DN maps}. This immediately proves the assertion of Theorem~\ref{thm: exterior determination}.
\end{proof}

\begin{remark}
    If the assumptions of Theorem~\ref{thm: exterior determination} hold for $\gamma_1$ and $\gamma_2$, then the proof shows by following the same strategy as in \cite{RGZ2022GlobalUniqueness} the reconstruction formula
    \[
        \gamma_i(x_0)=\lim_{N\to\infty}\langle \Lambda_{\gamma_i}\phi_N,\phi_N\rangle
    \]
    for a.e. $x_0\in W$ and the stability estimate
    \[
        \|\gamma_1-\gamma_2\|_{L^{\infty}(W)}\leq 2^s\|\Lambda_{\gamma_1}-\Lambda_{\gamma_2}\|_{X\to X^*}.
    \]
We note that the stability estimate could be also formulated with partial data.
\end{remark}

\subsection{Low regularity Liouville reduction}

We first prove basic estimates for products and convolutions of Bessel potential functions. These are needed to assure that the potentials associated with the studied conductivities are in the right space of Sobolev multipliers (Lemma \ref{lemma: sobolev multiplier property}). This is then used in the solution of the inverse problem by making the Liouville reduction to the fractional Schrödinger equation. 

\begin{lemma}[Multiplication estimates]
\label{lemma: multiplication estimates}
    Let $0<s<\min(1,n/2)$. Assume that $\gamma\in L^{\infty}(\R^n)$ with background deviation $m$ satisfies $\gamma(x)\geq \gamma_0>0$ and $m\in H^{s,n/s}(\R^n)$. If $u,v\in H^s(\R^n)$, then there holds $uv\in H^{s,\frac{n}{n-s}}(\R^n)$, $mu\in H^s(\R^n)$ with
        \begin{equation}
        \label{eq: uv estimate}
            \|uv\|_{H^{s,\frac{n}{n-s}}(\R^n)}\leq C\|u\|_{H^s(\R^n)}\|v\|_{H^s(\R^n)}
        \end{equation}
        and
        \begin{equation}
        \label{eq: mu estimate}
           \|mu\|_{H^s(\R^n)}\leq C(\|m\|_{L^{\infty}(\R^n)}+\|m\|_{H^{s,n/s}(\R^n)})\|u\|_{H^s(\R^n)},
        \end{equation}
where $C=C(n,s)>0$.
\end{lemma}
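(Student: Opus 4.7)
The plan is to reduce both inequalities to direct applications of the Runst--Sickel multiplication lemma (Proposition~\ref{prop: Runs-Sickel Lemma I}) with $q=2$, combined with the identification $H^{s,p}(\R^n) = F^{s}_{p,2}(\R^n)$ for $1<p<\infty$ from Remark~\ref{remark: identification}~\ref{second identification remark A1} and with the Sobolev embedding $H^s(\R^n) \hookrightarrow L^{2n/(n-2s)}(\R^n)$, which is valid since $0 < s < n/2$. I first observe that uniform ellipticity of $\gamma$ automatically gives $m = \gamma^{1/2}-1 \in L^\infty(\R^n)$ with norm controlled by $\gamma_0$ and $\|\gamma\|_{L^\infty(\R^n)}$, so $m \in H^{s,n/s}(\R^n)\cap L^\infty(\R^n)$.

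For \eqref{eq: uv estimate}, I would take the target exponent $p = n/(n-s)$ and choose $p_1 = p_2 = 2$, $r_1 = r_2 = 2n/(n-2s)$. A routine check gives $0 < 1/p = (n-s)/n < 1$ and
\[
\frac{1}{p_1}+\frac{1}{r_2} \;=\; \frac{1}{2} + \frac{n-2s}{2n} \;=\; \frac{n-s}{n} \;=\; \frac{1}{p_2}+\frac{1}{r_1},
\]
so the admissibility hypotheses of Proposition~\ref{prop: Runs-Sickel Lemma I} are satisfied (all exponents lie in $(1,\infty)$ because $0<s<n/2$). The proposition then yields
\[
\|uv\|_{H^{s,n/(n-s)}(\R^n)} \;\leq\; C\bigl(\|u\|_{H^s(\R^n)}\|v\|_{L^{2n/(n-2s)}(\R^n)} + \|v\|_{H^s(\R^n)}\|u\|_{L^{2n/(n-2s)}(\R^n)}\bigr),
\]
and the Sobolev embedding converts each $L^{2n/(n-2s)}$ norm into an $H^s$ norm, giving \eqref{eq: uv estimate}.

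For \eqref{eq: mu estimate}, I would apply Proposition~\ref{prop: Runs-Sickel Lemma I} with target $p=2$, assigning to $m$ the indices $p_1 = n/s$ and $r_1 = \infty$ (allowed by the lemma) and to $u$ the indices $p_2 = 2$ and $r_2 = 2n/(n-2s)$. One then verifies
\[
\frac{1}{p_1}+\frac{1}{r_2} \;=\; \frac{s}{n} + \frac{n-2s}{2n} \;=\; \frac{1}{2} \;=\; \frac{1}{p_2}+\frac{1}{r_1},
\]
so the proposition applies and delivers
\[
\|mu\|_{H^s(\R^n)} \;\leq\; C\bigl(\|m\|_{H^{s,n/s}(\R^n)}\|u\|_{L^{2n/(n-2s)}(\R^n)} + \|m\|_{L^\infty(\R^n)}\|u\|_{H^s(\R^n)}\bigr),
\]
from which \eqref{eq: mu estimate} follows after applying Sobolev embedding.

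The only delicate aspect of the argument is the choice of exponents and verifying that they satisfy every admissibility condition of Proposition~\ref{prop: Runs-Sickel Lemma I}; the hypothesis $0<s<\min(1,n/2)$ together with Sobolev embedding take care of each such restriction. Beyond this bookkeeping of indices, I do not foresee any substantial analytic obstacle.
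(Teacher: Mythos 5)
Your proposal is correct and follows essentially the same route as the paper: both arguments apply Proposition~\ref{prop: Runs-Sickel Lemma I} with $q=2$, the identification $F^s_{p,2}=H^{s,p}$, and the Sobolev embedding $H^s\hookrightarrow L^{2n/(n-2s)}$, using the identical exponent choices $(p,p_1,p_2,r_1,r_2)=(\tfrac{n}{n-s},2,2,\tfrac{2n}{n-2s},\tfrac{2n}{n-2s})$ for \eqref{eq: uv estimate} and $(2,\tfrac{n}{s},2,\infty,\tfrac{2n}{n-2s})$ for \eqref{eq: mu estimate}. The only cosmetic difference is that you make explicit the (correct) observation that $m\in L^\infty$ follows from uniform ellipticity, which the paper leaves implicit.
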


\begin{proof}
    We first prove the estimate \eqref{eq: uv estimate}. Let $u,v\in H^s(\R^n)$ and observe that by the Sobolev embedding we have $u,v\in L^{\frac{2n}{n-2s}}(\R^n)$. Next note that
    \[
        \frac{1}{2}+\frac{n-2s}{2n}=\frac{n-s}{n}
    \]
    and hence the exponents $p= \frac{n}{n-s},\,p_1=p_2=2,\,r_1=r_2=\frac{2n}{n-2s}$ satisfy the assumptions in Proposition~\ref{prop: Runs-Sickel Lemma I}. Now if we choose $q=2$ in Proposition~\ref{prop: Runs-Sickel Lemma I}, use the identification \ref{second identification remark A1} of Remark~\ref{remark: identification} and apply the Sobolev embedding we deduce that $uv\in H^{s,\frac{n}{n-s}}(\R^n)$ satisfying
    \[
        \begin{split}
            \|uv\|_{H^{s,\frac{n}{n-s}}(\R^n)}&\leq C(\|u\|_{H^s(\R^n)}\|v\|_{L^{\frac{2n}{n-2s}}(\R^n)}+\|v\|_{H^s(\R^n)}\|u\|_{L^{\frac{2n}{n-2s}}(\R^n)})\\
            &\leq C\|u\|_{H^s(\R^n)}\|v\|_{H^s(\R^n)}.
        \end{split}
    \]
    
    Next observe that the background deviation $m$ satisfies $m\in H^{s,n/s}(\R^n)\cap L^{\infty}(\R^n)$ and there holds
    \begin{equation}\label{eq: some-conjugate-pairs}
        \frac{s}{n}+\frac{n-2s}{2n}=\frac{1}{2}.
    \end{equation}
    Therefore the exponents $p= 2,\,p_1=n/s,\,p_2=2,\,r_1=\infty,\,r_2=\frac{2n}{n-2s}$ fulfill the conditions in Proposition~\ref{prop: Runs-Sickel Lemma I}. Now again applying Proposition~\ref{prop: Runs-Sickel Lemma I} with $q=2$, using \ref{second identification remark A1} of Remark~\ref{remark: identification} and the Sobolev embedding we deduce that $mu\in H^s(\R^n)$ satisfies
    \[
    \begin{split}
        \|mu\|_{H^s(\R^n)}&\leq C(\|m\|_{H^{s,n/s}(\R^n)}\|u\|_{L^{\frac{2n}{n-2s}}(\R^n)}+\|u\|_{H^s(\R^n)}\|m\|_{L^{\infty}(\R^n)})\\
        &\leq C(\|m\|_{H^{s,n/s}(\R^n)}+\|m\|_{L^{\infty}(\R^n)})\|u\|_{H^s(\R^n)}.
    \end{split}
    \]
    This establishes the estimate \eqref{eq: mu estimate}.
\end{proof}

\begin{lemma}[Convergence of mollifications]
\label{lemma: convergence lemma}
    Let $0<s<n/2$ and assume that $(v_k)_{k\in\N}\subset H^s(\R^n)$ satisfies $v_k\to v$ in $H^s(\R^n)$ as $k\to\infty$ for some $v\in H^s(\R^n)$. If $m\in H^{s,n/s}(\R^n)\cap L^{\infty}(\R^n)$, then there holds $m_kv_k\to mv$ in $H^s(\R^n)$ as $k\to\infty$, where $m_k\vcentcolon = \rho_{\epsilon_k}\ast m$ for some decreasing sequence $\epsilon_k\to 0$ and a sequence of standard mollifiers $(\rho_{\epsilon})_{\epsilon>0}\subset C_c^{\infty}(\R^n)$.
\end{lemma}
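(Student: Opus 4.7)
The plan is to reduce the claim directly to part \ref{item 3 runst sickel II } of Proposition~\ref{prop: Runs-Sickel Lemma II} with a careful choice of exponents. Observe that identifying $H^{s,p} = F^s_{p,2}$ (see Remark~\ref{remark: identification}\ref{second identification remark A1}), the target space $H^s(\R^n) = F^s_{2,2}$ fits the left-hand side of the Runst--Sickel statement if we set $q=2$, $p=2$, $p_1 = n/s$, and $r = 2n/(n-2s)$. The Sobolev conjugate relation
\begin{equation}
    \frac{1}{p} = \frac{1}{p_1} + \frac{1}{r}, \qquad \text{i.e.} \qquad \frac{1}{2} = \frac{s}{n} + \frac{n-2s}{2n},
\end{equation}
is exactly the identity \eqref{eq: some-conjugate-pairs}, so all integrability hypotheses of Proposition~\ref{prop: Runs-Sickel Lemma II}\ref{item 3 runst sickel II } are satisfied (note $p_1>1$ since $s<n$, and $r>1$ since $s<n/2$).

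With these exponents fixed, there are essentially three hypotheses to verify. First, the sequence $m_k = \rho_{\epsilon_k}\ast m$ converges to $m$ in $F^s_{n/s,2} = H^{s,n/s}(\R^n)$: since $\vev{D}^s$ commutes with convolution one has $\vev{D}^s m_k = \rho_{\epsilon_k}\ast \vev{D}^s m$, and $\vev{D}^s m \in L^{n/s}(\R^n)$ by definition, so the standard approximation of the identity result in $L^{n/s}$ gives $\vev{D}^s m_k \to \vev{D}^s m$ in $L^{n/s}(\R^n)$. Second, since $\rho_{\epsilon_k}\geq 0$ and $\int \rho_{\epsilon_k}=1$, Young's inequality yields $\|m_k\|_{L^\infty(\R^n)} \leq \|m\|_{L^\infty(\R^n)}$ uniformly in $k$. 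Third, by Sobolev embedding $H^s(\R^n)\hookrightarrow L^{2n/(n-2s)}(\R^n)$, the given convergence $v_k\to v$ in $H^s(\R^n)$ simultaneously delivers convergence in $L^r(\R^n) = L^{2n/(n-2s)}(\R^n)$; hence $v_k\to v$ in $F^s_{p,q}\cap L^r = H^s(\R^n) \cap L^{2n/(n-2s)}(\R^n)$.

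Applying Proposition~\ref{prop: Runs-Sickel Lemma II}\ref{item 3 runst sickel II } with $f^k = m_k$, $f = m$, $g^k = v_k$, $g = v$ then yields $m_k v_k \to mv$ in $F^s_{2,2} = H^s(\R^n)$, which is the claim.

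I do not expect a serious obstacle here: the proof is essentially an exponent-tracking exercise matching the hypotheses of the Runst--Sickel machinery already recalled in the preliminaries. The only mild subtlety is noticing that convergence in $H^s$ automatically upgrades to convergence in the needed Sobolev-conjugate Lebesgue space $L^{2n/(n-2s)}$, so that both the $F^s_{p,q}$-convergence and the $L^r$-convergence required of $g^k$ in part~\ref{item 3 runst sickel II } are present for free; and, dually, that the standard mollifier keeps $m_k$ uniformly bounded in $L^\infty$ while approximating $m$ in $H^{s,n/s}$, which is the reason one \emph{cannot} simply invoke the multiplication estimate \eqref{eq: mu estimate} with $m_k-m$ in place of $m$ (there is no $L^\infty$-convergence of $m_k$ to $m$ in general).
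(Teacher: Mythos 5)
Your proof is correct and follows essentially the same route as the paper: invoke Proposition~\ref{prop: Runs-Sickel Lemma II}\ref{item 3 runst sickel II } with $q=2$, $p=2$, $p_1=n/s$, $r=2n/(n-2s)$, using the uniform $L^\infty$ bound on $m_k$ from the mollifier normalization and the Sobolev embedding for $v_k\to v$ in $L^r$. You actually spell out one hypothesis the paper leaves implicit, namely that $m_k\to m$ in $H^{s,n/s}(\R^n)$ via $\vev{D}^s m_k=\rho_{\epsilon_k}\ast \vev{D}^s m$ and approximation of the identity in $L^{n/s}$, which is a welcome bit of added care.
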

\begin{proof}
    By properties of mollification and $\int_{\R^n}\rho_{\epsilon}\,dx=1$, $\rho_{\epsilon}\geq 0$ for all $\epsilon>0$, we have 
    \[
        \|m_k\|_{L^{\infty}(\R^n)}\leq \|\rho_{\epsilon_k}\|_{L^1(\R^n)}\|m\|_{L^{\infty}(\R^n)}\leq \|m\|_{L^{\infty}(\R^n)}<\infty.
    \]
    By the Sobolev embedding there holds $v_k\to v$ in $L^{\frac{2n}{n-2s}}(\R^n)$ as $k\to\infty$. Since \eqref{eq: some-conjugate-pairs} holds, all conditions in assertion \ref{item 3 runst sickel II } of Proposition~\ref{prop: Runs-Sickel Lemma II} with $q=2$ are satisfied and by using statement \ref{second identification remark A1} of Remark~\ref{remark: identification} we can conclude that $m_kv_k\to mv$ in $H^s(\R^n)$ as $k\to\infty$.
\end{proof}

\begin{corollary}[Exterior conditions]
\label{corollary: regularity exterior conditions}
    Let $\Omega\subset \R^n$ be an open set and $0<s<\min(1,n/2)$. Assume that $\gamma\in L^{\infty}(\R^n)$ with background deviation $m$ satisfies $\gamma(x)\geq \gamma_0>0$ and $m\in H^{s,n/s}(\R^n)$. If $u\in\widetilde{H}^s(\Omega)$, then there holds $\gamma^{1/2}u,\,\gamma^{-1/2}u\in\widetilde{H}^s(\Omega)$.
\end{corollary}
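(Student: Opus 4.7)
The plan is to reduce both statements to the mollifier-approximation setup provided by Lemmas \ref{lemma: multiplication estimates} and \ref{lemma: convergence lemma}. For $\gamma^{1/2}u$ one uses the given decomposition $\gamma^{1/2} = 1 + m$ directly. For $\gamma^{-1/2}u$ one first has to check that the ``new background deviation'' $\tilde{m} := \gamma^{-1/2}-1$ again belongs to $H^{s,n/s}(\R^n) \cap L^\infty(\R^n)$, and then repeats the argument with the roles of $\gamma$ and $\gamma^{-1}$ interchanged.

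First I would treat $\gamma^{1/2}u$. Writing $\gamma^{1/2}u = u + mu$ and using $u \in \widetilde H^s(\Omega) \subset H^s(\R^n)$, it is enough to show $mu \in \widetilde H^s(\Omega)$. Pick $u_k \in C_c^\infty(\Omega)$ with $u_k \to u$ in $H^s(\R^n)$, and mollifications $m_k := \rho_{\epsilon_k} \ast m$ with $\epsilon_k \downarrow 0$. Each $m_k$ is smooth with $\|m_k\|_{L^\infty(\R^n)} \leq \|m\|_{L^\infty(\R^n)}$, hence $m_k u_k$ is smooth and supported in $\supp(u_k) \Subset \Omega$, so $m_k u_k \in C_c^\infty(\Omega)$. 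Lemma \ref{lemma: convergence lemma} applied with $v_k = u_k$ yields $m_k u_k \to mu$ in $H^s(\R^n)$, so $mu$ lies in the $H^s(\R^n)$-closure of $C_c^\infty(\Omega)$, i.e.\ in $\widetilde H^s(\Omega)$. Note that Lemma \ref{lemma: multiplication estimates} guarantees $mu \in H^s(\R^n)$ in the first place, so the statement is well-posed.

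Next I would obtain $\gamma^{-1/2}u \in \widetilde H^s(\Omega)$ by applying the previous step to the pair $(\gamma^{-1},\tilde m)$ in place of $(\gamma, m)$. This requires verifying that $\gamma^{-1}$ satisfies the hypotheses of the corollary: uniform ellipticity (with constants $\|\gamma\|_{L^\infty(\R^n)}^{-1}$ and $\gamma_0^{-1}$), $\tilde m \in L^\infty(\R^n)$, and $\tilde m \in H^{s,n/s}(\R^n)$. The first two items are immediate from the uniform ellipticity of $\gamma$. For the Bessel regularity I would write $\tilde m = \Phi(m)$ with $\Phi(t) = (1+t)^{-1} - 1$; this function satisfies $\Phi(0)=0$ and is smooth on the essential range of $m$ (contained in $[\gamma_0^{1/2}-1,\|\gamma\|_{L^\infty(\R^n)}^{1/2}-1]$), so it can be redefined outside a neighbourhood of that range to a function in $C^\infty_b(\R)$ vanishing at the origin. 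A standard composition (fractional chain rule) estimate in Bessel potential spaces then gives $\Phi(m) \in H^{s,n/s}(\R^n)$. If one wishes to avoid citing composition results, one can argue by smoothing $m$ to $m_k = \rho_{\epsilon_k} \ast m$, applying the pointwise chain rule to obtain $\|\Phi(m_k)\|_{H^{s,n/s}(\R^n)} \leq C \|m_k\|_{H^{s,n/s}(\R^n)}$ uniformly in $k$, and extracting a weak limit using reflexivity of $H^{s,n/s}(\R^n)$; the weak limit must coincide with $\tilde m$ by a.e.\ pointwise convergence.

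The main obstacle is thus the verification $\tilde m \in H^{s,n/s}(\R^n)$, which is the only point needing an input beyond this section. Everything else is a routine application of the multiplication and convergence estimates of Lemmas \ref{lemma: multiplication estimates} and \ref{lemma: convergence lemma} combined with the support preservation property $\supp(m_k u_k) \subset \supp(u_k) \Subset \Omega$.
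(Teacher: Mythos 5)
Your proposal is correct and follows essentially the same route as the paper's proof: both handle $\gamma^{1/2}u$ via the decomposition $\gamma^{1/2}=1+m$ together with the mollification/approximation machinery of Lemmas \ref{lemma: multiplication estimates} and \ref{lemma: convergence lemma}, and both reduce the case $\gamma^{-1/2}u$ to showing that $-m/(m+1)$ (which is exactly your $\tilde m$) lies in $H^{s,n/s}(\R^n)\cap L^\infty(\R^n)$ via a composition result in Bessel potential spaces applied to a $C^1_b$ (or $C^\infty_b$) modification of $t\mapsto t/(t+1)$ vanishing at the origin. The only cosmetic difference is that the paper directly inserts the decomposition $\gamma^{-1/2}=1-\tfrac{m}{m+1}$ rather than relabelling and re-invoking the first case, and cites a specific composition reference \cite[p.~156]{AdamsComposition} where you offer either a generic fractional chain rule or a mollification-plus-weak-limit alternative.
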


\begin{proof}
    Let $(\rho_{\epsilon})_{\epsilon>0}\subset C_c^{\infty}(\R^n)$ be a sequence of standard mollifiers and choose a sequence $u_n\in C_c^{\infty}(\Omega)$ such that $u_n\to u$ in $H^s(\R^n)$ as $n\to\infty$. We have $m_{\epsilon}\vcentcolon = \rho_{\epsilon}\ast m\in C^{\infty}(\R^n)$ and therefore by Lemma~\ref{lemma: convergence lemma} the sequence $(m_{\epsilon_k}u_k)_{k\in\N}\subset C_c^{\infty}(\Omega)$ satisfies $m_{\epsilon_k}u_k\to mu$ in $H^s(\R^n)$ as $k\to\infty$. Hence, there holds $\gamma^{1/2}u=mu+u\in\widetilde{H}^s(\Omega)$. Next note that we can write 
    \[
       \frac{1}{\gamma^{1/2}}=1-\frac{m}{m+1}
    \]
    and set $\Gamma_0\vcentcolon =\min(0,\gamma_0^{1/2}-1)$. Let $\Gamma\in C^1_b(\R)$ satisfy $\Gamma(t)=\frac{t}{t+1}$ for $t\geq \Gamma_0$. By \cite[p.~156]{AdamsComposition} and $m\in H^{s,n/s}(\R^n)\cap L^{\infty}(\R^n)$, we deduce $\Gamma(m)\in H^{s,n/s}(\R^n)$. Since $m\geq \gamma_0^{1/2}-1$ it follows that $\frac{m}{m+1}\in H^{s,n/s}(\R^n)\cap L^{\infty}(\R^n)$. We again obtain that $\frac{m}{m+1}u\in \widetilde{H}^s(\Omega)$ and therefore $\gamma^{-1/2}u\in\widetilde{H}^s(\Omega)$.
\end{proof}

\begin{lemma}[Sobolev multiplier property]
\label{lemma: sobolev multiplier property}
    Let $0<s<\min(1,n/2)$. Assume that $\gamma\in L^{\infty}(\R^n)$ with background deviation $m$ satisfies $\gamma(x)\geq \gamma_0>0$ and $m\in H^{s,n/s}(\R^n)$. Then the distribution $q_{\gamma}=-\frac{(-\Delta)^sm}{\gamma^{1/2}}$, defined by
    \[
        \langle q_{\gamma},\phi\rangle\vcentcolon =-\langle (-\Delta)^{s/2}m,(-\Delta)^{s/2}(\gamma^{-1/2}\phi)\rangle_{L^2(\R^n)}
    \]
    for all $\phi\in C_c^{\infty}(\R^n)$, belongs to $M(H^s\rightarrow H^{-s})$. Moreover, for all $u,\phi \in H^s(\R^n)$, we have
    \begin{equation}
        \langle q_{\gamma}u,\phi\rangle = -\langle (-\Delta)^{s/2}m,(-\Delta)^{s/2}(\gamma^{-1/2}u\phi)\rangle_{L^2(\R^n)}
    \end{equation}
    satisfying the estimate
    \begin{equation}
    \label{eq: bilinear estimate}
\begin{split}
    |\langle q_{\gamma}u,\phi\rangle|&\leq C(1+\gamma_0^{-1/2}\|m\|_{L^{\infty}(\R^n)}+\|m\|_{H^{s,n/s}(\R^n)})\|m\|_{H^{s,n/s}(\R^n)}
    \\ &\quad\quad \cdot \|u\|_{H^s(\R^n)}\|\phi\|_{H^s(\R^n)}.
\end{split}
    \end{equation}
\end{lemma}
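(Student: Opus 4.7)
The plan is to verify the bilinear estimate \eqref{eq: bilinear estimate} directly for test functions $u,\phi \in C_c^\infty(\R^n)$, which immediately places $q_\gamma$ in $M(H^s \to H^{-s})$ by the definition of the multiplier norm; the identity on $H^s(\R^n) \times H^s(\R^n)$ then follows by a density argument. First, I would apply Hölder's inequality to
\[
\langle q_\gamma, u\phi\rangle = -\langle (-\Delta)^{s/2} m,(-\Delta)^{s/2}(\gamma^{-1/2} u\phi)\rangle_{L^2(\R^n)}
\]
with the dual exponents $n/s$ and $n/(n-s)$ (note $s/n + (n-s)/n = 1$), and use the continuity of $(-\Delta)^{s/2}\colon H^{s,p}(\R^n) \to L^p(\R^n)$ for $1<p<\infty$ to reduce matters to
\[
|\langle q_\gamma, u\phi\rangle| \leq C\,\|m\|_{H^{s,n/s}(\R^n)}\,\|\gamma^{-1/2} u\phi\|_{H^{s,n/(n-s)}(\R^n)}.
\]

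The main task is then to bound $\|\gamma^{-1/2} u\phi\|_{H^{s,n/(n-s)}(\R^n)}$. I would split $\gamma^{-1/2} = 1 - M$ with $M \vcentcolon = m/(1+m)$, so that the $u\phi$ contribution is directly controlled by Lemma \ref{lemma: multiplication estimates}. For the $Mu\phi$ contribution, I would invoke the Runst--Sickel lemma (Proposition \ref{prop: Runs-Sickel Lemma I}) with $p = n/(n-s)$, $q = 2$, and exponents $p_1 = n/s$, $r_1 = \infty$, $p_2 = n/(n-s)$, $r_2 = n/(n-2s)$, all satisfying $\tfrac{1}{p_1}+\tfrac{1}{r_2} = \tfrac{1}{p_2}+\tfrac{1}{r_1} = \tfrac{n-s}{n}$. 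This yields
\[
\|Mu\phi\|_{H^{s,n/(n-s)}} \leq C\bigl(\|M\|_{H^{s,n/s}}\|u\phi\|_{L^{n/(n-2s)}} + \|M\|_{L^\infty}\|u\phi\|_{H^{s,n/(n-s)}}\bigr),
\]
where $\|u\phi\|_{L^{n/(n-2s)}} \leq C\|u\|_{H^s}\|\phi\|_{H^s}$ follows from Hölder combined with the Sobolev embedding $H^s(\R^n) \hookrightarrow L^{2n/(n-2s)}(\R^n)$. The pointwise bound $\|M\|_{L^\infty} \leq \gamma_0^{-1/2}\|m\|_{L^\infty}$ is immediate from $1+m = \gamma^{1/2} \geq \gamma_0^{1/2}$, and $\|M\|_{H^{s,n/s}} \leq C\|m\|_{H^{s,n/s}}$ follows from the action of bounded $C^1$ functions on $H^{s,n/s}(\R^n) \cap L^\infty(\R^n)$, exactly as in the proof of Corollary \ref{corollary: regularity exterior conditions}. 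Collecting these bounds produces \eqref{eq: bilinear estimate} on $C_c^\infty(\R^n) \times C_c^\infty(\R^n)$.

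For the extension to $u,\phi \in H^s(\R^n)$, note that the bilinear form $(u,\phi) \mapsto -\langle(-\Delta)^{s/2} m,(-\Delta)^{s/2}(\gamma^{-1/2} u\phi)\rangle_{L^2(\R^n)}$ is continuous on $H^s(\R^n) \times H^s(\R^n)$ by the same estimates, once combined with Lemma \ref{lemma: convergence lemma} to pass multiplication by $\gamma^{-1/2}$ through the limit. Since it agrees with $\langle q_\gamma u,\phi\rangle$ on the dense subspace $C_c^\infty(\R^n) \times C_c^\infty(\R^n)$ by the definition \eqref{eq: multiplication by sobolev multiplier}, the two coincide throughout $H^s(\R^n) \times H^s(\R^n)$. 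The principal delicacy is the quantitative composition estimate for $M$, whose constant must be tracked through its dependence on $\gamma_0$; this is standard given the lower bound $1+m \geq \gamma_0^{1/2}>0$ and mirrors the argument used in Corollary \ref{corollary: regularity exterior conditions}.
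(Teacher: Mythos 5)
Your proposal is correct and follows essentially the same route as the paper's proof: the same Hölder pairing with exponents $n/s$ and $n/(n-s)$, the same decomposition $\gamma^{-1/2}=1-\tfrac{m}{1+m}$, the same appeal to the Adams composition bound and the pointwise bound for $\tfrac{m}{1+m}$, and the same reliance on the Runst--Sickel machinery. The only (minor) difference is how the product $\tfrac{m}{1+m}\,u\phi$ is estimated in $H^{s,n/(n-s)}(\R^n)$: the paper first forms $\tfrac{m}{1+m}u\in H^s(\R^n)$ via \eqref{eq: mu estimate} and then applies \eqref{eq: uv estimate} to the pair $(\tfrac{m}{1+m}u,\,\phi)$, whereas you apply Proposition~\ref{prop: Runs-Sickel Lemma I} directly to the pair $(\tfrac{m}{1+m},\,u\phi)$ with $(p_1,r_1,p_2,r_2)=(n/s,\infty,n/(n-s),n/(n-2s))$ — a legitimate alternative grouping that requires the extra (but easy) observation that $u\phi\in L^{n/(n-2s)}(\R^n)$.
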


\begin{remark}
    In the rest of this article, we set $q_{\gamma}\vcentcolon =-\frac{(-\Delta)^sm_{\gamma}}{\gamma^{1/2}}\in M(H^s\rightarrow H^{-s})$ and refer to it as a potential. If no confusion can arise, we will drop the subscript $\gamma$.
\end{remark}

\begin{proof}
    Let $u,\phi\in H^s(\R^n)$ and note that by Corollary~\ref{corollary: regularity exterior conditions} we have $\gamma^{-1/2}\phi\in H^s(\R^n)$. Hence by Lemma~\ref{lemma: multiplication estimates}, it follows that $uv/\gamma^{1/2}\in H^{s,\frac{n}{n-s}}(\R^n)$ and the mapping properties of the fractional Laplacian show $(-\Delta)^{s/2}(uv/\gamma^{1/2})\in L^{\frac{n}{n-s}}(\R^n)$. Therefore we obtain by H\"older's inequality with
    \[
        \frac{s}{n}+\frac{n-s}{n}=1
    \]
    the estimate
    \begin{equation}
    \label{eq: estimate}
    \begin{split}
        &|\langle (-\Delta)^{s/2}m,(-\Delta)^{s/2}(\gamma^{-1/2}u\phi)\rangle_{L^2(\R^n)}|\\
        &\leq \|(-\Delta)^{s/2}m\|_{L^{n/s}(\R^n)}\|(-\Delta)^{s/2}(\gamma^{-1/2}u\phi)\|_{L^{\frac{n}{n-s}}(\R^n)}\\
        &\leq \|m\|_{H^{s,n/s}(\R^n)}\|\gamma^{-1/2}u\phi\|_{H^{s,\frac{n}{n-s}}(\R^n)}\\
        &\leq \|m\|_{H^{s,n/s}(\R^n)}(\|u\phi\|_{H^{s,\frac{n}{n-s}}(\R^n)}+\|\tfrac{m}{m+1}u\phi\|_{H^{s,\frac{n}{n-s}}(\R^n)}).
    \end{split}
    \end{equation}
    In the last inequality we used the decompositions $\gamma^{-1/2}=1-\frac{m}{m+1}$ with $\frac{m}{m+1}\in H^{s,n/s}(\R^n)\cap L^{\infty}(\R^n)$ (see the proof of Corollary~\ref{corollary: regularity exterior conditions}). 
    
    Using \eqref{eq: uv estimate} we can estimate the expression in brackets in the equation \eqref{eq: estimate}, \eqref{eq: mu estimate} as
    \[
    \begin{split}
        &\|u\phi\|_{H^{s,\frac{n}{n-s}}(\R^n)}+\|\tfrac{m}{m+1}u\phi\|_{H^{s,\frac{n}{n-s}}(\R^n)}\\
        &\leq C(\|u\|_{H^s(\R^n)}\|\phi\|_{H^s(\R^n)}+\|\tfrac{m}{m+1}u\|_{H^s(\R^n)}\|\phi\|_{H^s(\R^n)})\\
        &\leq C(1+\|\tfrac{m}{m+1}\|_{L^{\infty}(\R^n)}+\|\tfrac{m}{m+1}\|_{H^{s,n/s}(\R^n)})\|u\|_{H^s(\R^n)}\|\phi\|_{H^s(\R^n)}.
    \end{split}
    \]
    Next observe that by \cite[p.~156]{AdamsComposition} there holds 
    \[
        \left\|\frac{m}{m+1}\right\|_{H^{s,n/s}(\R^n)}\leq C\|m\|_{H^{s,n/s}(\R^n)}
    \]
    for some $C>0$ and $0<s<1$. This implies 
    \begin{equation}
    \label{eq: estimate bilinear q}
    \begin{split}
        &|\langle (-\Delta)^{s/2}m,(-\Delta)^{s/2}(\gamma^{-1/2}u\phi)\rangle_{L^2(\R^n)}|\\
        &\leq C(1+\gamma_0^{-1/2}\|m\|_{L^{\infty}(\R^n)}+\|m\|_{H^{s,n/s}(\R^n)})\|m\|_{H^{s,n/s}(\R^n)}\\
        &\quad\quad \cdot\|u\|_{H^s(\R^n)}\|\phi\|_{H^s(\R^n)}.
    \end{split}
    \end{equation}
    Now by using cutoff functions and the estimate \eqref{eq: estimate bilinear q}, one may notice that $q_\gamma$ defines a distribution. Furthermore, then it directly follows that $q_{\gamma}\in M(H^s\rightarrow H^{-s})$. Finally, by the equations \eqref{eq: uv estimate}, \eqref{eq: mu estimate} and H\"older's inequality one can deduce that there holds
    \[
        \langle q_{\gamma}u,\phi\rangle =\langle (-\Delta)^{s/2}m,(-\Delta)^{s/2}(\gamma^{-1/2}u\phi)\rangle_{L^2(\R^n)}
    \]
    for all $u,\phi\in H^s(\R^n)$ (see \eqref{eq: multiplication by sobolev multiplier}). Therefore, we can conclude the proof.
\end{proof} 

\begin{lemma}[Liouville reduction]
\label{lemma: Liouville reduction}
    Let $0<s<\min(1,n/2)$. Assume that $\gamma\in L^{\infty}(\R^n)$ with conductivity matrix $\Theta$ and background deviation $m$ satisfies $\gamma(x)\geq \gamma_0>0$ and $m\in H^{s,n/s}(\R^n)$. Then there holds
    \begin{equation}
    \label{eq: Liouville reduction}
    \begin{split}
        \langle\Theta\nabla^su,\nabla^s\phi\rangle_{L^2(\R^{2n})}=&\,\langle (-\Delta)^{s/2}(\gamma^{1/2}u),(-\Delta)^{s/2}(\gamma^{1/2}\phi))\rangle_{L^2(\R^n)}\\
        &+\langle q(\gamma^{1/2}u),(\gamma^{1/2}\phi)\rangle
    \end{split}
    \end{equation}
    for all $u,\phi\in H^s(\R^n)$.
\end{lemma}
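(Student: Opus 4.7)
The plan is to derive \eqref{eq: Liouville reduction} from a pointwise algebraic identity combined with continuity of both sides as bilinear forms on $H^s(\R^n)\times H^s(\R^n)$, density, and a mollification argument. Setting $v=\gamma^{1/2}u$ and $w=\gamma^{1/2}\phi$, both of which lie in $H^s(\R^n)$ by the estimate \eqref{eq: mu estimate} of Lemma~\ref{lemma: multiplication estimates}, and using $\gamma^{1/2}=m+1$, a direct expansion gives the pointwise identity
\[
\gamma^{1/2}(x)\gamma^{1/2}(y)(u(x)-u(y))(\phi(x)-\phi(y))=(v(x)-v(y))(w(x)-w(y))-(m(x)-m(y))(v(x)\phi(x)-v(y)\phi(y)).
\]
Multiplying by $\tfrac{C_{n,s}}{2}|x-y|^{-n-2s}$ and integrating over $\R^{2n}$ should produce $B_\gamma(u,\phi)$ on the left and, on the right, the sum $\langle (-\Delta)^{s/2}v,(-\Delta)^{s/2}w\rangle_{L^2(\R^n)}-\langle (-\Delta)^{s/2}m,(-\Delta)^{s/2}(v\phi)\rangle_{L^2(\R^n)}$; the second piece equals $\langle qv,w\rangle$ by the defining action of $q$ in Lemma~\ref{lemma: sobolev multiplier property}, since $\gamma^{-1/2}vw=v\phi$.

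Both sides of \eqref{eq: Liouville reduction} are continuous bilinear forms on $H^s(\R^n)\times H^s(\R^n)$: the LHS by Lemma~\ref{prop: bilinear forms conductivity eq}, the first $L^2$-pairing on the right by Lemma~\ref{lemma: multiplication estimates} together with the boundedness of $(-\Delta)^{s/2}\colon H^s(\R^n)\to L^2(\R^n)$, and the $q$-pairing by the estimate \eqref{eq: bilinear estimate}. By density of $C_c^\infty(\R^n)$ in $H^s(\R^n)$, it therefore suffices to verify the identity for $u,\phi\in C_c^\infty(\R^n)$. For such fixed test functions I would approximate $m$ by its mollifications $m_\epsilon=\rho_\epsilon\ast m\in C^\infty(\R^n)\cap L^\infty(\R^n)\cap H^{s,n/s}(\R^n)$, set $\gamma_\epsilon^{1/2}=1+m_\epsilon$, and write $v_\epsilon=\gamma_\epsilon^{1/2}u$, $w_\epsilon=\gamma_\epsilon^{1/2}\phi$. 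Since $v_\epsilon,w_\epsilon,\gamma_\epsilon^{1/2}u\phi\in C_c^\infty(\R^n)$, the pointwise identity integrates via Fubini, and the classical singular integral representation $\tfrac{C_{n,s}}{2}\iint\tfrac{(f(x)-f(y))(g(x)-g(y))}{|x-y|^{n+2s}}\,dxdy=\langle (-\Delta)^{s/2}f,(-\Delta)^{s/2}g\rangle_{L^2(\R^n)}$, applied to $(f,g)=(v_\epsilon,w_\epsilon)$ and $(f,g)=(m_\epsilon,\gamma_\epsilon^{1/2}u\phi)$, gives the identity \eqref{eq: Liouville reduction} at the $\epsilon$ level.

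The last step is to pass to the limit $\epsilon\downarrow 0$. Lemma~\ref{lemma: convergence lemma} gives $v_\epsilon\to v$ and $w_\epsilon\to w$ in $H^s(\R^n)$, from which $B_{\gamma_\epsilon}(u,\phi)\to B_\gamma(u,\phi)$ follows by dominated convergence on $\nabla^s u\cdot\nabla^s\phi\in L^1(\R^{2n})$ (using the uniform bound $\|\gamma_\epsilon^{1/2}\|_{L^\infty(\R^n)}\leq 1+\|m\|_{L^\infty(\R^n)}$), and $\langle (-\Delta)^{s/2}v_\epsilon,(-\Delta)^{s/2}w_\epsilon\rangle\to\langle (-\Delta)^{s/2}v,(-\Delta)^{s/2}w\rangle$ by bilinear continuity. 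The main obstacle is the $q$-term, which requires $(-\Delta)^{s/2}m_\epsilon\to (-\Delta)^{s/2}m$ in $L^{n/s}(\R^n)$ (immediate from $m_\epsilon\to m$ in $H^{s,n/s}(\R^n)$) together with $\gamma_\epsilon^{1/2}u\phi\to v\phi$ in $H^{s,n/(n-s)}(\R^n)$. The latter I would deduce from Proposition~\ref{prop: Runs-Sickel Lemma II}\ref{item 3 runst sickel II } with exponents $p_1=n/s$, $p=n/(n-s)$, $r=n/(n-2s)$, applied to $f^k=m_\epsilon$ (uniformly bounded in $L^\infty(\R^n)$) and the constant sequence $g^k=u\phi\in C_c^\infty(\R^n)\subset H^{s,n/(n-s)}(\R^n)\cap L^{n/(n-2s)}(\R^n)$, which yields $m_\epsilon u\phi\to mu\phi$ in $H^{s,n/(n-s)}(\R^n)$ and hence $\gamma_\epsilon^{1/2}u\phi=(1+m_\epsilon)u\phi\to v\phi$. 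H\"older's inequality in the $L^{n/s}$-$L^{n/(n-s)}$ duality then closes the passage to the limit and establishes \eqref{eq: Liouville reduction}.
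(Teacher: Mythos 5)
Your proposal is correct and follows essentially the same route as the paper: mollify the background deviation, verify the identity for smooth compactly supported test functions, and pass to the limit, extending afterwards by bilinear continuity and density. The one genuinely helpful thing you do that the paper delegates to \cite[Proof of Theorem 8.6]{RZ2022unboundedFracCald} is write out the clean pointwise algebraic identity
\[
\gamma^{1/2}(x)\gamma^{1/2}(y)(u(x)-u(y))(\phi(x)-\phi(y))=(v(x)-v(y))(w(x)-w(y))-(m(x)-m(y))(v(x)\phi(x)-v(y)\phi(y)),
\]
which makes the source of the two terms on the right-hand side of \eqref{eq: Liouville reduction} transparent; your use of Proposition~\ref{prop: Runs-Sickel Lemma II}\ref{item 3 runst sickel II } with the triple $(p_1,p,r)=(n/s,n/(n-s),n/(n-2s))$ is also a slightly more direct route to $\gamma_\epsilon^{1/2}u\phi\to v\phi$ in $H^{s,n/(n-s)}$ than the paper's combination of Lemmas~\ref{lemma: multiplication estimates} and \ref{lemma: convergence lemma}. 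The only place you should slow down is the application of the singular integral representation to the pair $(f,g)=(m_\epsilon,\gamma_\epsilon^{1/2}u\phi)$: since $m_\epsilon$ is not compactly supported, the formula is not the standard one for $C_c^\infty$ functions, and the pairing on the right is really the $L^{n/s}$--$L^{n/(n-s)}$ duality rather than an honest $L^2$ inner product. The paper justifies this by first invoking the pointwise formula for $(-\Delta)^s m_\epsilon$ (via \cite[Remark 8.9]{RZ2022unboundedFracCald} and \cite[Proposition 2.1.4]{SilvestreFracObstaclePhd}), then symmetrizing, and finally approximating to pass from $\langle(-\Delta)^s m_\epsilon,g\rangle$ to $\langle(-\Delta)^{s/2}m_\epsilon,(-\Delta)^{s/2}g\rangle$; you should make this step explicit rather than calling it classical.
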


\begin{proof} We first prove identity \eqref{eq: Liouville reduction} for Schwartz functions and then extend it to functions in $H^s(\R^n)$ by an approximation argument.\\
    \textbf{Step 1}: Let $u,\phi\in\schwartz(\R^n)$ and define $\gamma^{1/2}_{\epsilon}\vcentcolon = \gamma^{1/2}\ast \rho_{\epsilon}\in C^{\infty}_b(\R^n)$, $m_{\epsilon}\vcentcolon = m\ast \rho_{\epsilon}\in C^{\infty}_b(\R^n)\cap H^{s,n/s}(\R^n)$. Here $(\rho_{\epsilon})_{\epsilon>0}\subset C_c^{\infty}(\R^n)$ is a sequence of standard mollifiers and $C^{\infty}_b(\R^n)$ denotes the space of smooth functions with bounded derivatives. We have $m_{\epsilon}=\gamma^{1/2}_{\epsilon}-1$. From \cite[Proof of Theorem 8.6]{RZ2022unboundedFracCald}, we know 
\[
    \lim_{\epsilon\to 0}\langle \Theta_{\epsilon}\nabla^su,\nabla^s\phi\rangle_{L^2(\R^{2n})}=\langle \Theta\nabla^su,\nabla^s\phi\rangle_{L^2(\R^{2n})}
\]
for all $u,\phi\in H^s(\R^n)$, where  $\Theta_{\epsilon}=\gamma^{1/2}_{\epsilon}(x)\gamma^{1/2}_{\epsilon}(y)\mathbf{1}_{n\times n}$. Using \cite[Remark 8.9]{RZ2022unboundedFracCald}, we deduce that $m_{\epsilon}\in C^t(\R^n)\cap L_s(\R^n)$ for all $t\in \R_+\setminus \N$ and by \cite[Proposition 2.1.4]{SilvestreFracObstaclePhd} there holds
\[
    (-\Delta)^{s}m_{\epsilon}(x)=-\frac{C_{n,s}}{2}\int_{\R^n}\frac{\delta m_{\epsilon}(x,y)}{|y|^{n+2s}}\,dy
\]
for all $x\in\R^n$. Here $C^t(\R^n)$, $t\in\R_+\setminus\N$, denotes the space of all H\"older continuous functions and $L_s(\R^n)$ stands for the space of all functions $f\in L^1_{loc}(\R^n)$ such that 
\[
    \int_{\R^n}\frac{|f(x)|}{1+|x|^{n+2s}}\,dx<\infty.
\]
Thus, the proof of \cite[Theorem 8.6]{RZ2022unboundedFracCald} shows
\begin{equation}
\label{eq: approximation term}
\begin{split}
    \langle \Theta\nabla^su,\nabla^s\phi\rangle_{L^2(\R^{2n})}=&\lim_{\epsilon\to 0} \big(\langle (-\Delta)^{s/2}(\gamma^{1/2}_{\epsilon}u), (-\Delta)^{s/2}(\gamma^{1/2}_{\epsilon}\phi)\rangle_{L^2(\R^n)}\\
    &-\langle (-\Delta)^{s}m_{\epsilon},\gamma^{1/2}_{\epsilon}u\phi \rangle_{L^2(\R^n)}\big)
\end{split}
\end{equation}
for all $u,\phi\in\schwartz(\R^n)$. 

Since $u,\phi\in\schwartz(\R^n)$ and $\gamma^{1/2}_{\epsilon}\in C^{\infty}_b(\R^n)$, we have $\gamma^{1/2}_{\epsilon}u\phi\in\schwartz(\R^n)$. Therefore, by the fact that $m_{\epsilon}\in H^{t,n/s}(\R^n)$ for all $t\in\R$, we have by approximation
\[
    \langle (-\Delta)^{s}m_{\epsilon},\gamma^{1/2}_{\epsilon}u\phi \rangle_{L^2(\R^n)}=\langle (-\Delta)^{s/2}m_{\epsilon},(-\Delta)^{s/2}(\gamma^{1/2}_{\epsilon}u\phi)\rangle_{L^2(\R^n)}.
\]
Using $m_{\epsilon}=\gamma_{\epsilon}^{1/2}-1$, we can decompose the last expression as
\[
\begin{split}
    \langle (-\Delta)^{s}m_{\epsilon},\gamma^{1/2}_{\epsilon}u\phi \rangle_{L^2(\R^n)}=\,&\langle (-\Delta)^{s/2}m_{\epsilon},(-\Delta)^{s/2}(m_{\epsilon}u\phi) \rangle_{L^2(\R^n)}\\
    &+\langle (-\Delta)^{s/2}m_{\epsilon},(-\Delta)^{s/2}(u\phi) \rangle_{L^2(\R^n)}
\end{split}
\]
for all $u,\phi\in\schwartz(\R^n)$. By our regularity assumptions, we know $(-\Delta)^{s/2}m_{\epsilon}\to (-\Delta)^{s/2}m$ in $L^{n/s}(\R^n)$. Thus, by the mapping properties of the fractional Laplacian and H\"older's inequality, we have
\[
    \langle (-\Delta)^{s/2}m_{\epsilon},(-\Delta)^{s/2}(u\phi) \rangle_{L^2(\R^n)}\to \langle (-\Delta)^{s/2}m,(-\Delta)^{s/2}(u\phi) \rangle_{L^2(\R^n)}
\]
as $\epsilon\to 0$. By Lemma~\ref{lemma: multiplication estimates}, we can estimate
\begin{equation}
\begin{split}
    \|m_{\epsilon}u\phi-mu\phi\|_{H^{s,\frac{n}{n-s}}(\R^n)}\leq & C\|m_{\epsilon}u-mu\|_{H^s(\R^n)}\|\phi\|_{H^s(\R^n)}
\end{split}
\end{equation}
and since Lemma~\ref{lemma: convergence lemma} implies $m_{\epsilon}u\to mu$ in $H^s(\R^n)$ as $\epsilon\to 0$ we deduce $m_{\epsilon}u\phi\to mu\phi$ in $H^{s,\frac{n}{n-s}}(\R^n)$. By the mapping properties of the fractional Laplacian, we have
\[
    (-\Delta)^{s/2}(m_{\epsilon}u\phi)\to (-\Delta)^{s/2}(mu\phi)\quad\text{in}\quad L^{\frac{n}{n-s}}(\R^n)
\]
as $\epsilon\to 0$ and therefore H\"older's inequality shows 
\[
    \langle (-\Delta)^{s/2}m_{\epsilon},(-\Delta)^{s/2}(m_{\epsilon}u\phi) \rangle_{L^2(\R^n)}\to \langle (-\Delta)^{s/2}m,(-\Delta)^{s/2}(mu\phi) \rangle_{L^2(\R^n)}
\]
as $\epsilon\to 0$. Thus, we have shown that there holds
\[
\begin{split}
    \lim_{\epsilon\to 0}\langle (-\Delta)^{s}m_{\epsilon},\gamma^{1/2}_{\epsilon}u\phi \rangle_{L^2(\R^n)}=&\,\langle (-\Delta)^{s/2}m,(-\Delta)^{s/2}(mu\phi)\rangle\\
    &+ \langle (-\Delta)^{s/2}m,(-\Delta)^{s/2}(u\phi)\rangle\\
    =&\,\langle (-\Delta)^{s/2}m, (-\Delta)^{s/2}(\gamma^{1/2}u\phi)\rangle
\end{split}
\]
for all $u,\phi\in \schwartz(\R^n)$. 

By the usual splitting $\gamma_{\epsilon}^{1/2}=m_{\epsilon}+1$, we see that the first term in \eqref{eq: approximation term} can be decomposed as
\begin{equation}
\label{eq: splitting}
\begin{split}
    &\langle (-\Delta)^{s/2}(\gamma^{1/2}_{\epsilon}u), (-\Delta)^{s/2}(\gamma^{1/2}_{\epsilon}\phi)\rangle_{L^2(\R^n)}\\
    &=\,\langle (-\Delta)^{s/2}(m_{\epsilon}u), (-\Delta)^{s/2}(m_{\epsilon}\phi)\rangle_{L^2(\R^n)}\\
    &\quad+\langle (-\Delta)^{s/2}(m_{\epsilon}u), (-\Delta)^{s/2}\phi\rangle_{L^2(\R^n)}\\
    &\quad+\langle (-\Delta)^{s/2}u, (-\Delta)^{s/2}(m_{\epsilon}\phi)\rangle_{L^2(\R^n)}\\
    &\quad+\langle (-\Delta)^{s/2}u, (-\Delta)^{s/2}\phi\rangle_{L^2(\R^n)}.
\end{split}
\end{equation}
    Now Lemma~\ref{lemma: convergence lemma} and the continuity of the fractional Laplacian imply 
    \[
        (-\Delta)^{s/2}(m_{\epsilon}v)\to (-\Delta)^{s/2}(mv)\quad\text{in}\quad L^2(\R^n)
    \]
    as $\epsilon\to 0$ for all $v\in H^s(\R^n)$. This shows
    \[\begin{split}
        &\langle (-\Delta)^{s/2}(\gamma^{1/2}_{\epsilon}u), (-\Delta)^{s/2}(\gamma^{1/2}_{\epsilon}\phi)\rangle_{L^2(\R^n)}\\
        &\to \langle (-\Delta)^{s/2}(\gamma^{1/2}u), (-\Delta)^{s/2}(\gamma^{1/2}\phi)\rangle_{L^2(\R^n)}
        \end{split}
    \]
    for all $u,\phi\in\schwartz(\R^n)$ and hence by the definition of $q$ the identity \eqref{eq: Liouville reduction} holds for all $u,\phi\in \schwartz(\R^n)$.\\
    
    \noindent\textbf{Step 2}: Let $(u_k)_{k\in\N},(\phi_k)_{k\in\N}\subset C_c^{\infty}(\R^n)$ such that $u_k\to u$ and $\phi_k\to \phi$ in $H^s(\R^n)$ as $k\to\infty$. By the first step, we have
    \begin{equation}
    \label{eq: convergence for Hs}
    \begin{split}
    \langle\Theta\nabla^su_k,\nabla^s\phi_k\rangle_{L^2(\R^{2n})}=&\,\langle (-\Delta)^{s/2}(\gamma^{1/2}u_k),(-\Delta)^{s/2}(\gamma^{1/2}\phi_k))\rangle_{L^2(\R^n)}\\
    &-\langle q(\gamma^{1/2}u_k),(\gamma^{1/2}\phi_k)\rangle
    \end{split}
    \end{equation}
    for all $k\in\N$. By continuity of the fractional gradient from $H^s(\R^n)$ to $L^2(\R^{2n})$ and $\phi_k\to \phi$, $u_k\to u$ in $H^s(\R^n)$ as $k\to\infty$, we have
    \[
        \langle\Theta\nabla^su_k,\nabla^s\phi_k\rangle_{L^2(\R^{2n})}\to \langle\Theta\nabla^su,\nabla^s\phi\rangle_{L^2(\R^{2n})}
    \]
    as $k\to\infty$. Using again the splitting \eqref{eq: splitting} and Lemma~\ref{lemma: convergence lemma}, we deduce
    \[\begin{split}
        &\langle (-\Delta)^{s/2}(\gamma^{1/2}u_k), (-\Delta)^{s/2}(\gamma^{1/2}\phi_k)\rangle_{L^2(\R^n)}
        \\&\to \langle (-\Delta)^{s/2}(\gamma^{1/2}u), (-\Delta)^{s/2}(\gamma^{1/2}\phi)\rangle_{L^2(\R^n)}
        \end{split}
    \]
    as $k\to\infty$. Finally, the convergence of the second term in \eqref{eq: convergence for Hs} follows from the estimate \eqref{eq: bilinear estimate} in Lemma~\ref{lemma: sobolev multiplier property} and Lemma~\ref{lemma: convergence lemma}. Hence, we can conclude the proof. 
\end{proof}

\begin{definition}[Bilinear form and weak solutions to the Schr\"odinger equation]
\label{def: weak solutions Schrödinger eq}
    Let $\Omega\subset\R^n$ be an open set with nonempty exterior, $s>0$ and $q\in M(H^s\rightarrow H^{-s})$. Then we define the (continuous) bilinear form related to the fractional Schr\"odinger equation with potential $q$ by $B_q\colon H^s(\R^n)\times H^s(\R^n)\to \R$, where
    \[
       B_q(u,v)\vcentcolon =\int_{\R^n}(-\Delta)^{s/2}u\,(-\Delta)^{s/2}v\,dx+\langle qu,v\rangle
     \]
     for all $u,v\in H^s(\R^n)$. Moreover, if $f\in H^s(\R^n)$ and $F\in (\widetilde{H}^s(\Omega))^*$, then we say that $v\in H^s(\R^n)$ is a weak solution to the fractional Schr\"odinger equation
    \begin{equation}
            \begin{split}
            ((-\Delta)^s+q)v&=0\quad\text{in}\quad\Omega,\\
            v&=f\quad\text{in}\quad\Omega_e
        \end{split}
    \end{equation}
     if there holds $B_q(v,\phi)=F(\phi)$ for all $\phi\in \widetilde{H}^s(\Omega)$ and $v-f\in\widetilde{H}^s(\Omega)$.
\end{definition}

\begin{lemma}[Well-posedness and DN maps for the Schr\"odinger equation]
\label{eq: well-posedness results and DN maps}
    Let $\Omega\subset \R^n$ be an open set which is bounded in one direction and $0<s<\min(1,n/2)$. Assume that $\gamma\in L^{\infty}(\R^n)$ with conductivity matrix $\Theta$, background deviation $m$ and potential $q$ satisfies $\gamma(x)\geq \gamma_0>0$ and $m\in H^{s,n/s}(\R^n)$. Then the following assertions hold:
    \begin{enumerate}[(i)]
        \item\label{item 1 well-posedness cond eq and DN maps} If $u\in H^s(\R^n)$, $g\in X\vcentcolon = H^s(\R^n)/\widetilde{H}^s(\Omega)$ and $v\vcentcolon =\gamma^{1/2}u,\,f\vcentcolon =\gamma^{1/2}g$. Then $v\in H^s(\R^n), f\in X$ and $u$ is a weak solution of the fractional conductivity equation
        \begin{equation}
        \label{eq: fractional conductivity equation}
        \begin{split}
            \Div_s(\Theta\nabla^s u)&= 0\quad\text{in}\quad\Omega,\\
            u&= g\quad\text{in}\quad\Omega_e
        \end{split}
        \end{equation}
        if and only if $v$ is a weak solution of the fractional Schr\"odinger equation
        \begin{equation}
        \label{eq: fractional Schroedinger equation}    
            \begin{split}
            ((-\Delta)^s+q_{\gamma})v&=0\quad\text{in}\quad\Omega,\\
            v&=f\quad\text{in}\quad\Omega_e.
        \end{split}
        \end{equation}
        \item\label{item 2 well-posedness cond eq and DN maps} Conversely, if $v\in H^s(\R^n), f\in X$ and $u\vcentcolon =\gamma^{-1/2}v,\,g\vcentcolon =\gamma^{-1/2}f$. Then $v$ is a weak solution of \eqref{eq: fractional Schroedinger equation} if and only if $u$ is a weak solution of \eqref{eq: fractional conductivity equation}.
        \item\label{item 3 well-posedness cond eq and DN maps} For all $f\in X$ there is a unique weak solutions $v_f\in H^s(\R^n)$ of the fractional Schr\"odinger equation
        \begin{equation}
            \begin{split}
            ((-\Delta)^s+q)v&=0\quad\text{in}\quad\Omega,\\
            v&=f\quad\text{in}\quad\Omega_e.
        \end{split}
        \end{equation}
        \item\label{item 4 well-posedness cond eq and DN maps} The exterior DN map $\Lambda_q\colon X\to X^*$ given by 
        \[
        \begin{split}
            \langle \Lambda_qf,g\rangle\vcentcolon =B_q(v_f,g),
        \end{split}
        \]
        where $v_f\in H^s(\R^n)$ is the unique solution to the Schr\"odinger equation with exterior value $f$, is a well-defined bounded linear map. 
    \end{enumerate}
\end{lemma}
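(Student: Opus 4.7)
The plan is to transfer the well-posedness of the conductivity equation (Lemma \ref{lemma: well-posedness results and DN maps}) to the Schr\"odinger setting through the Liouville reduction (Lemma \ref{lemma: Liouville reduction}), exploiting that multiplication by $\gamma^{\pm 1/2}$ is a linear bijection both of $\widetilde{H}^s(\Omega)$ and of the quotient $X$ onto themselves. The preliminary observation is this bijection statement. If $g \in H^s(\R^n)$, then $\gamma^{1/2}g = g + mg \in H^s(\R^n)$ by Lemma \ref{lemma: multiplication estimates}, and if $g' - g \in \widetilde{H}^s(\Omega)$ then $\gamma^{1/2}(g' - g) \in \widetilde{H}^s(\Omega)$ by Corollary \ref{corollary: regularity exterior conditions}. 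Hence multiplication by $\gamma^{1/2}$ descends to a well-defined bounded linear map $X \to X$, and the analogous argument for $\gamma^{-1/2}$ shows this map is a bijection of $X$, and in the same way a bijection of $\widetilde{H}^s(\Omega)$ onto itself.

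Parts \ref{item 1 well-posedness cond eq and DN maps} and \ref{item 2 well-posedness cond eq and DN maps} then become two readings of the same computation. Given $u \in H^s(\R^n)$, $g \in X$ and $v := \gamma^{1/2}u$, $f := \gamma^{1/2}g$, the equivalence $u - g \in \widetilde{H}^s(\Omega) \Longleftrightarrow v - f \in \widetilde{H}^s(\Omega)$ is immediate from the bijection above. For the equation, the Liouville reduction gives the identity
\[
B_\gamma(u, \phi) = B_q(\gamma^{1/2}u, \gamma^{1/2}\phi) = B_q(v, \gamma^{1/2}\phi)
\]
for all $\phi \in H^s(\R^n)$, and as $\phi$ ranges over $\widetilde{H}^s(\Omega)$, so does $\psi := \gamma^{1/2}\phi$. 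Therefore $B_\gamma(u, \phi) = 0$ for all $\phi \in \widetilde{H}^s(\Omega)$ if and only if $B_q(v, \psi) = 0$ for all $\psi \in \widetilde{H}^s(\Omega)$, which proves \ref{item 1 well-posedness cond eq and DN maps}; statement \ref{item 2 well-posedness cond eq and DN maps} is the same equivalence read backwards.

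For \ref{item 3 well-posedness cond eq and DN maps}, given $f \in X$ I would set $g := \gamma^{-1/2}f \in X$, apply part \ref{item 1 well-posedness cond eq} of Lemma \ref{lemma: well-posedness results and DN maps} to obtain the unique conductivity solution $u_g \in H^s(\R^n)$, and define $v_f := \gamma^{1/2}u_g$. Part \ref{item 1 well-posedness cond eq and DN maps} ensures $v_f$ solves the Schr\"odinger problem with exterior value $f$, and uniqueness follows from \ref{item 2 well-posedness cond eq and DN maps}: any other solution $v$ would yield $\gamma^{-1/2}v$ as a conductivity solution with the same exterior value $g$, hence $\gamma^{-1/2}v = u_g$ and $v = v_f$. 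For \ref{item 4 well-posedness cond eq and DN maps}, the map $\langle \Lambda_q f, h\rangle := B_q(v_f, h)$ is independent of the representative of $h$ because $B_q(v_f, \phi) = 0$ for $\phi \in \widetilde{H}^s(\Omega)$ (the weak equation), and $v_f$ depends only on the coset $f$ by \ref{item 3 well-posedness cond eq and DN maps}; boundedness then follows from the continuity of $B_q$ on $H^s(\R^n) \times H^s(\R^n)$ combined with $\|v_f\|_{H^s(\R^n)} \leq C\|u_g\|_{H^s(\R^n)}$ via Lemma \ref{lemma: multiplication estimates} and the well-posedness estimate $\|u_g\|_{H^s(\R^n)} \leq C\|g\|_{X} \leq C\|f\|_{X}$ already supplied by Lemma \ref{lemma: well-posedness results and DN maps}. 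No step looks genuinely delicate: the hard technical input (Liouville reduction, the Sobolev multiplier property of $q$, and conductivity well-posedness) is available from the preceding lemmas, and the only book-keeping concerns the interplay of multiplication by $\gamma^{\pm 1/2}$ with $\widetilde{H}^s(\Omega)$, which is precisely what Corollary \ref{corollary: regularity exterior conditions} provides.
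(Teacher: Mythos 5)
Your proof is correct and follows essentially the same route as the paper's: transfer well-posedness from the conductivity equation via the Liouville reduction (Lemma \ref{lemma: Liouville reduction}), using Corollary \ref{corollary: regularity exterior conditions} to see that multiplication by $\gamma^{\pm1/2}$ preserves $\widetilde{H}^s(\Omega)$ and hence descends to a bijection of $X$. The paper states the argument more tersely (proving only (i) and citing the reduction), whereas you spell out the bijection observation and the chain of norm bounds for (iv); the substance is the same.
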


\begin{proof} Since the proofs for the assertion \ref{item 1 well-posedness cond eq and DN maps} and \ref{item 2 well-posedness cond eq and DN maps} are very similar we only show \ref{item 1 well-posedness cond eq and DN maps}.

\ref{item 1 well-posedness cond eq and DN maps} First of all note that by Lemma~\ref{lemma: sobolev multiplier property} the notion of weak solutions to the fractional Schr\"odinger equation is well-defined. Moreover, from Corollary~\ref{corollary: regularity exterior conditions} and the same argument as in \cite[Theorem 8.6]{RZ2022unboundedFracCald} it follows that $v\in H^s(\R^n)$ and $f\in X$. Now the rest of the assertion follows from the Liouville reduction (Lemma~\ref{lemma: Liouville reduction}).

\ref{item 3 well-posedness cond eq and DN maps} Let $g\vcentcolon = \gamma^{-1/2}f\in X$ then by Lemma~\ref{lemma: well-posedness results and DN maps} there is a unique solution $u_g\in H^s(\R^n)$ to the fractional conductivity equation with exterior value $g$. Using assertion \ref{item 1 well-posedness cond eq and DN maps} we deduce that $v_f\vcentcolon = \gamma^{1/2}u_g\in H^s(\R^n)$ solves \eqref{eq: fractional Schroedinger equation} with exterior value  $f$. The obtained solution is unique since solutions to the fractional conductivity equation are.

\ref{item 4 well-posedness cond eq and DN maps} This last assertion follows from the continuity of the fractional Laplacian, Lemma~\ref{lemma: sobolev multiplier property} and the fact that the solutions to the Schr\"odinger equation depend continuously on the data.
\end{proof}

\begin{remark} The equations \eqref{eq: fractional conductivity equation} and \eqref{eq: fractional Schroedinger equation} also have the Runge approximation property. This follows from the abstract theory in \cite[Section 4]{RZ2022unboundedFracCald}, and in particular from \cite[Remark 4.2 and Theorem 4.3]{RZ2022unboundedFracCald}. The original argument is based on the work \cite{GSU20}. We give two proofs of Theorem~\ref{thm: Global uniqueness} in this article. The first one is based on the Runge approximation property and standard arguments. The second proof, based on a new strategy, does not rely on the Runge approximation property.
\end{remark}

\subsection{First proof of Theorem \ref{thm: Global uniqueness}}
We present the first proof of Theorem \ref{thm: Global uniqueness} in this section. It follows the standard structure applied earlier in different regularity settings in \cite{RGZ2022GlobalUniqueness,C21,RZ2022unboundedFracCald}.

First, we state a useful basic lemma as a preparation. The proof of Lemma \ref{lemma: Relation DN maps} is virtually identical to that of \cite[Lemma 4.1]{RGZ2022GlobalUniqueness} and follows from Corollary~\ref{corollary: regularity exterior conditions}, and Lemmas~\ref{lemma: Liouville reduction} and \ref{eq: well-posedness results and DN maps} considering the low regularity setting. Therefore, we do not repeate these details here.

\begin{lemma}
\label{lemma: Relation DN maps}
    Let $\Omega\subset\R^n$ be an open set which is bounded in one direction, $W\subset\Omega_e$ an open set and $0<s<\min(1,n/2)$. Assume that $\gamma,\Gamma\in L^{\infty}(\R^n)$ with background deviations $m_{\gamma},m_{\Gamma}$ satisfy $\gamma(x),\Gamma(x)\geq \gamma_0>0$ and $m_{\gamma},m_{\Gamma}\in H^{s,n/s}(\R^n)$. If $\gamma|_{W}=\Gamma|_{W}$, then
    \begin{equation}
    \label{eq: identity DN maps}
        \langle \Lambda_{\gamma}f,g\rangle=\langle \Lambda_{q_\gamma}(\Gamma^{1/2}f),(\Gamma^{1/2}g)\rangle
    \end{equation}
    holds for all $f,g\in\widetilde{H}^{s}(W)$.
\end{lemma}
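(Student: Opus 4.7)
The plan is to pass from the conductivity bilinear form to the Schrödinger bilinear form via Lemma \ref{lemma: Liouville reduction}, then exploit the hypothesis $\gamma|_W = \Gamma|_W$ to swap $\gamma^{1/2}$ for $\Gamma^{1/2}$ inside the supports of $f$ and $g$.

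\textbf{Step 1: Set up and basic regularity.} Fix $f,g \in \widetilde{H}^s(W)$. Since $\supp(f), \supp(g) \subset \overline{W}$ and $\gamma|_W = \Gamma|_W$, the functions $\gamma^{1/2}f$ and $\Gamma^{1/2}f$ agree pointwise a.e.\ and similarly for $g$. By Corollary \ref{corollary: regularity exterior conditions}, $\gamma^{1/2}f = \Gamma^{1/2}f$ and $\gamma^{1/2}g = \Gamma^{1/2}g$ both belong to $\widetilde{H}^s(W) \subset H^s(\R^n)$, and in particular they represent elements of the exterior trace space $X = H^s(\R^n)/\widetilde{H}^s(\Omega)$ supported in $W$.

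\textbf{Step 2: Liouville reduction of the DN pairing.} Let $u_f \in H^s(\R^n)$ be the unique solution of the fractional conductivity equation with conductivity $\gamma$ and exterior value $f$, and set $v_f \vcentcolon = \gamma^{1/2} u_f$. By Lemma \ref{eq: well-posedness results and DN maps}\ref{item 1 well-posedness cond eq and DN maps}, $v_f$ is the unique weak solution of the fractional Schrödinger equation with potential $q_\gamma$ and exterior data $\gamma^{1/2} f = \Gamma^{1/2} f \in X$. Applying Lemma \ref{lemma: Liouville reduction} to the pair $(u_f, g)$ yields
\[
\langle \Lambda_\gamma f, g \rangle = B_\gamma(u_f, g) = \langle (-\Delta)^{s/2}(\gamma^{1/2}u_f), (-\Delta)^{s/2}(\gamma^{1/2}g)\rangle_{L^2(\R^n)} + \langle q_\gamma(\gamma^{1/2}u_f), \gamma^{1/2}g\rangle,
\]
which is exactly $B_{q_\gamma}(v_f, \gamma^{1/2} g)$ by Definition \ref{def: weak solutions Schrödinger eq}.

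\textbf{Step 3: Swap $\gamma$ for $\Gamma$ and recognize the Schrödinger DN map.} Using Step 1, $\gamma^{1/2}g = \Gamma^{1/2}g$ as elements of $H^s(\R^n)$, hence
\[
\langle \Lambda_\gamma f, g\rangle = B_{q_\gamma}(v_f, \Gamma^{1/2}g).
\]
Since $v_f$ is precisely the unique Schrödinger solution with exterior value $\Gamma^{1/2}f$ (from Step 2) and $\Gamma^{1/2}g \in \widetilde{H}^s(W) \subset X$ serves as a valid test element, the definition of $\Lambda_{q_\gamma}$ in Lemma \ref{eq: well-posedness results and DN maps}\ref{item 4 well-posedness cond eq and DN maps} gives
\[
B_{q_\gamma}(v_f, \Gamma^{1/2}g) = \langle \Lambda_{q_\gamma}(\Gamma^{1/2}f), \Gamma^{1/2}g\rangle,
\]
which concludes the proof.

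The only mild subtlety, and the point where the low-regularity hypotheses really enter, is in verifying that $\gamma^{1/2}f, \Gamma^{1/2}f, \gamma^{1/2}g, \Gamma^{1/2}g$ all lie in $\widetilde{H}^s(W)$ (so the pairings are literally well defined and the swap in Step 3 is legitimate), but this is delivered directly by Corollary \ref{corollary: regularity exterior conditions}. The remainder of the argument is a purely formal rearrangement built on the Liouville identity and the uniqueness part of Lemma \ref{eq: well-posedness results and DN maps}.
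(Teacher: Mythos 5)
Your proof is correct and follows essentially the approach the paper intends (it cites \cite[Lemma 4.1]{RGZ2022GlobalUniqueness} without spelling out details): apply the Liouville reduction (Lemma~\ref{lemma: Liouville reduction}) to write $B_\gamma(u_f,g)=B_{q_\gamma}(\gamma^{1/2}u_f,\gamma^{1/2}g)$, use Corollary~\ref{corollary: regularity exterior conditions} and $\gamma|_W=\Gamma|_W$ to identify $\gamma^{1/2}f=\Gamma^{1/2}f$ and $\gamma^{1/2}g=\Gamma^{1/2}g$ in $\widetilde{H}^s(W)$, and then recognize the resulting expression as the Schrödinger DN pairing via Lemma~\ref{eq: well-posedness results and DN maps}. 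The chain of identifications and the use of uniqueness to match $v_f$ with the Schrödinger solution having exterior datum $\Gamma^{1/2}f$ are all handled correctly.
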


We can now give the first proof of Theorem~\ref{thm: Global uniqueness}.

\begin{proof}[First proof of Theorem \ref{thm: Global uniqueness}] We have that $\gamma_1|_W=\gamma_2|_W$ a.e. by exterior determination (Theorem \ref{thm: exterior determination}). Throughout the proof we choose any function $\Gamma\in L^{\infty}(\R^n)$ satisfying the conditions $\Gamma\geq \gamma_0$, $m_{\Gamma}\vcentcolon =\Gamma^{1/2}-1\in H^{s,n/s}(\R^n)$ and $\Gamma=\gamma_1=\gamma_2$ in $W$. 

By the argument in \cite[Proof of Theorem 1.1]{RGZ2022GlobalUniqueness} we know that there holds
    \[
        \langle \Lambda_{\gamma_1}f,g\rangle=\langle \Lambda_{\gamma_2}f,g\rangle
    \]
    for all $f,g\in \widetilde{H}^s(W)$ if and only if we have
    \[
        \langle \Lambda_{\gamma_1}f,g\rangle=\langle \Lambda_{\gamma_2}f,g\rangle
    \]
    for all $f,g\in C_c^{\infty}(W)$. Similarly, one can show by Lemma~\ref{lemma: convergence lemma} and \ref{lemma: sobolev multiplier property} that there holds
     \[
    \langle \Lambda_{q_1}f,g\rangle=\langle \Lambda_{q_2}f,g\rangle
    \]
    for all $f,g\in C_c^{\infty}(W)$ if and only if
    \[
    \langle \Lambda_{q_1}f,g\rangle=\langle \Lambda_{q_2}f,g\rangle
    \]
for all $f,g\in \widetilde{H}^s(W)$.

Now using Lemma~\ref{lemma: Relation DN maps}, we deduce that the condition $\Lambda_{\gamma_1}f|_{W}=\Lambda_{\gamma_2}f|_{W}$ for all $f\in C_c^{\infty}(W)$ is equivalent to 
\begin{equation}
\label{eq: equivalence 1}
     \langle \Lambda_{q_1}(\Gamma^{1/2}f),(\Gamma^{1/2}g)\rangle=\langle \Lambda_{q_2}(\Gamma^{1/2}f),(\Gamma^{1/2}g)\rangle
\end{equation}
for all $f,g\in \widetilde{H}^s(W)$. By Corollary~\ref{corollary: regularity exterior conditions}, we can replace $f,g$ in this identity by $\Gamma^{-1/2}f,\Gamma^{-1/2}g\in\widetilde{H}^s(W)$ and obtain \eqref{eq: equivalence 1} is equivalent to
\begin{equation}
\label{eq: equivalence 2}
     \langle \Lambda_{q_1}f,g\rangle=\langle \Lambda_{q_2}f,g\rangle  
\end{equation}
for all $f,g\in \widetilde{H}^s(W)$.

Using \eqref{eq: equivalence 2} and the uniqueness results for the fractional Schrödinger equations \cite[Theorem 2.2, Corollary 2.7 and Remark 4.2]{RZ2022unboundedFracCald}, we obtain $q_1=q_2$ in the weak sense in $\widetilde{H}^s(W \cup \Omega)$. This implies that
\[
    \langle (-\Delta)^{s/2}(m_1-m_2),(-\Delta)^{s/2}(fg)\rangle =0
\]
for all $f,g\in\widetilde{H}^s(W)$, where we have again used Corollary~\ref{corollary: regularity exterior conditions} to replaced $g\in \widetilde{H}^s(W)$ by $\Gamma^{1/2}g$ and $\gamma_1=\gamma_2=\Gamma$ in $W$. Now choosing $f\in C_c^{\infty}(\omega)$ and a cutoff function $g\in C_c^{\infty}(W)$ with $g|_{\omega}=1$, where $\omega\Subset W$ is some nonempty open set, we see that $(-\Delta)^{s}(m_1-m_2)=0$ in $\omega$. 
On the other hand, we have by the assumption $\gamma_1=\gamma_2$ in $\omega$ and hence the UCP (Theorem~\ref{thm:UCP-general}) implies $m_1\equiv m_2$. This shows $\gamma_1\equiv\gamma_2$, which concludes the proof.
\end{proof}

\subsection{Second proof of Theorem \ref{thm: Global uniqueness}}
\label{sec: second proof of main theorem}

In this subsection, we give an alternative proof of Theorem \ref{thm: Global uniqueness} which is not appearing in the earlier literature. This argument is based on using energies of the solutions in combination with the UCP. The merit of this argument is that it avoids using the Runge approximation property and a full scale reduction to the fractional Schrödinger equation. The approach resembles the proof of single measurement uniqueness for the Calderón problem of the fractional Schrödinger equation \cite{GRSU-fractional-calderon-single-measurement}. Heuristically speaking, this proof is based more directly on the properties of the fractional conductivity equation than the other argument, which reduces the problem to the data $\Lambda_{q_1}=\Lambda_{q_2}$.

Let $s\geq 0$ and $\gamma_0>0$. We write that $\gamma \in M_{\gamma_0}^s$ if the following holds:
\begin{enumerate}[(i)]
    \item  $\gamma\in L^{\infty}(\R^n)$ with $\gamma \geq \gamma_0 > 0$ a.e. in $\R^n$;
    \item $\gamma^{1/2} u \in \widetilde{H}^s(\Omega)$ and $\gamma^{-1/2} u \in \widetilde{H}^s(\Omega)$ for any open set $\Omega \subset \R^n$ and $u \in \widetilde{H}^s(\Omega)$.
\end{enumerate}

\begin{lemma}[Relation of solutions]\label{lemma: relation for solutions} Let $\Omega\subset \R^n$ be an open set which is bounded in one direction and $0<s<1$. Assume that $\gamma_1,\gamma_2\in M_{\gamma_0}^s$ satisfy $\gamma_1|_{W_2} = \gamma_2|_{W_2}$ and let $W_1,W_2 \subset \Omega_e$ be nonempty open sets.
If $\left.\Lambda_{\gamma_1}f\right|_{W_2}=\left.\Lambda_{\gamma_2}f\right|_{W_2}$ for some $f \in \widetilde{H}^s(W_1)$ with $W_2\setminus\supp(f)\neq \emptyset$, then there holds
\begin{equation}\label{eq: relation for solutions}
\gamma_1^{1/2}u_f^1 = \gamma_2^{1/2}u_f^2\quad \text{a.e. in $\R^n$}.
\end{equation}
\end{lemma}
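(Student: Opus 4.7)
The plan is to apply Theorem \ref{thm:UCP-general} to $w \vcentcolon = v_1 - v_2$, where $v_i \vcentcolon = \gamma_i^{1/2} u_f^i \in H^s(\R^n)$ is well-defined thanks to the multiplication property in the definition of $M_{\gamma_0}^s$. Set $V \vcentcolon = W_2 \setminus \supp f$, a nonempty open set by hypothesis. The first step is to observe that $w|_V = 0$: since $u_f^i = f$ on $\Omega_e \supset W_2$, one has $v_i = \gamma_i^{1/2} f$ on $W_2$; the agreement $\gamma_1 = \gamma_2$ on $W_2$ forces $v_1 = v_2$ throughout $W_2$, and in fact $v_1 = v_2 = 0$ on $V$ where $f$ vanishes.

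The heart of the argument is to show $(-\Delta)^s w = 0$ on $V$. Fix any $\phi \in C_c^\infty(V)$; then $\phi \in \widetilde{H}^s(W_2)$, so the DN-map hypothesis gives $B_{\gamma_1}(u_f^1,\phi) = B_{\gamma_2}(u_f^2,\phi)$. The crucial observation is that $\supp \phi$ is compact with strictly positive distance from both $\overline{\Omega}$ and $\supp f$, hence also from $\supp u_f^i \subset \overline{\Omega} \cup \supp f$ (recall $u_f^i = f \equiv 0$ on $\Omega_e \setminus \supp f$). Consequently $u_f^i(x)\phi(x) \equiv 0$ pointwise, so expanding $(u_f^i(x)-u_f^i(y))(\phi(x)-\phi(y))$ and symmetrizing in $(x,y)$ collapses the bilinear form to
\begin{equation*}
B_{\gamma_i}(u_f^i,\phi) = -C_{n,s}\int_{\R^n}\gamma_i^{1/2}(y)\phi(y) \int_{\R^n}\frac{v_i(x)}{|x-y|^{n+2s}}\,dx\,dy,
\end{equation*}
with absolutely convergent inner integral for $y \in \supp\phi$ by Cauchy--Schwarz and the positive distance $\dist(\supp\phi, \supp v_i) > 0$. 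Since $v_i(y) = 0$ on $V$, the pointwise singular integral representation of the fractional Laplacian identifies the inner integral with $-C_{n,s}^{-1}(-\Delta)^s v_i(y)$, so $B_{\gamma_i}(u_f^i,\phi) = \int_V \gamma_i^{1/2}\phi \cdot (-\Delta)^s v_i \, dy$. Subtracting the two identities and using $\gamma_1^{1/2}\phi = \gamma_2^{1/2}\phi$ on $V$ (since $\gamma_1 = \gamma_2$ there) gives $\int_V \gamma_1^{1/2}\phi \cdot (-\Delta)^s w \, dy = 0$ for every $\phi \in C_c^\infty(V)$. Because $(-\Delta)^s w$ is smooth on $V$ (as $w$ vanishes in an open neighborhood of each point of $V$) and $\gamma_1^{1/2} \geq \gamma_0^{1/2} > 0$, the fundamental lemma of the calculus of variations yields $(-\Delta)^s w \equiv 0$ on $V$.

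Combined with $w|_V = 0$ and $w \in H^s(\R^n)$, Theorem \ref{thm:UCP-general} (applied with $t = s$, $p = 2$, noting $s \in (0,1) \subset \R_+ \setminus \N$) forces $w \equiv 0$, i.e., $\gamma_1^{1/2}u_f^1 = \gamma_2^{1/2}u_f^2$ a.e. in $\R^n$. The main obstacle, and the reason this approach sidesteps the Liouville reduction, is the identification of $B_{\gamma_i}(u_f^i,\phi)$ with $\int \gamma_i^{1/2}\phi\,(-\Delta)^s v_i\,dy$ at this low regularity: a genuine Liouville reduction would require the stronger hypothesis $m_i \in H^{s,n/s}$ absent from $M_{\gamma_0}^s$, but the disjointness of $\supp\phi$ and $\supp u_f^i$ removes the diagonal singularity of the nonlocal bilinear form, allowing a direct identification through the pointwise fractional Laplacian formula on the exterior of $\supp v_i$.
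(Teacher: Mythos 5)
Your proposal is correct, and in spirit it is the same argument the paper uses: exploit disjointness of $\supp u_f^i$ and $\supp\phi$ to make the "potential term" disappear from the bilinear form, conclude that $w=\gamma_1^{1/2}u_f^1-\gamma_2^{1/2}u_f^2$ satisfies $w|_V=0$ and $(-\Delta)^sw|_V=0$, and then invoke Theorem \ref{thm:UCP-general}. The one place where your route differs technically is the middle step. The paper records the disjoint-support identity
\[
B_\gamma(u,\phi)=\langle(-\Delta)^{s/2}(\gamma^{1/2}u),(-\Delta)^{s/2}(\gamma^{1/2}\phi)\rangle_{L^2}
\]
and then, after inserting $\phi=\gamma_1^{-1/2}g$, reads off directly $\langle(-\Delta)^{s/2}w,(-\Delta)^{s/2}g\rangle_{L^2}=0$ for all $g\in\widetilde H^s(V)$, which is precisely the distributional statement $(-\Delta)^sw|_V=0$ needed for the UCP; nothing pointwise is required. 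You instead unfold $B_{\gamma_i}(u_f^i,\phi)$ to the off-diagonal kernel integral, pass to the pointwise singular-integral formula for $(-\Delta)^sv_i$ on $V$, and then invoke smoothness of $(-\Delta)^sw$ on $V$ plus the fundamental lemma of calculus of variations. This works but carries extra (standard, yet unstated) burdens: you must justify that for $v_i\in H^s(\R^n)$ vanishing near $V$ the pointwise and distributional fractional Laplacians coincide on $V$ and that the resulting function is smooth; and you must ultimately reconvert the pointwise vanishing into the distributional $(-\Delta)^sw|_V=0$ before the UCP applies. The paper's version avoids the pointwise detour entirely by staying in the weak/dual formulation, which is the cleaner route at this low regularity. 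Everything else — the choice of $V=W_2\setminus\supp f$, using $\gamma_1=\gamma_2$ on $W_2$ to identify the test-function weights, and the role of the membership $M_{\gamma_0}^s$ in legitimizing $\gamma^{\pm1/2}\phi\in\widetilde H^s(V)$ — matches the paper.
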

\begin{proof} Choose first some $f,\phi \in H^s(\R^n)$ with disjoint supports. For any $\gamma \in M_{\gamma_0}^s$ there holds $\gamma^{1/2}f,\gamma^{1/2}\phi \in H^s(\R^n)$, and hence we obtain
\begin{equation}\label{eq:DisjointlowregLiouville}
    B_\gamma(f,\phi) = \langle (-\Delta)^{s/2}(\gamma^{1/2}f),(-\Delta)^{s/2}(\gamma^{1/2}\phi)\rangle_{L^2(\R^n)}
\end{equation}
by expanding both terms into their quadratic forms and applying the disjoint support condition (cf.~\cite[eq. (3) and Theorem 2]{TruncationSobolev}). 

Let now $f \in \widetilde{H}^s(W_1)$ be as in the statement and denote by $u_f^1$, $u_f^2$ the corresponding unique solutions to the homogeneous fractional conductivity equations with the conductivities $\gamma_1, \gamma_2$ and exterior value $f$. By the assumption $V\vcentcolon = W_2\setminus\supp(f)$ is a nonempty open set. Then for all $\phi \in \widetilde{H}^s(V)$ we have by the definition of the DN maps and $\Lambda_{\gamma_1}f|_{W_2}=\Lambda_{\gamma_2}f|_{W_2}$ the identity
\begin{equation}\label{eq: DNmapsEquality}
    B_{\gamma_1}(u_f^1,\phi)=\ip{\Lambda_{\gamma_1}f}{\phi}=\ip{\Lambda_{\gamma_2}f}{\phi}=B_{\gamma_2}(u_f^2,\phi).
\end{equation}
Using \eqref{eq:DisjointlowregLiouville} and $\supp(f)\cap \supp(\phi)=\emptyset$, we obtain
\begin{equation}
\begin{split}
     &\langle (-\Delta)^{s/2}(\gamma_1^{1/2}u_f^1),(-\Delta)^{s/2}(\gamma_1^{1/2}\phi)\rangle_{L^2(\R^n)}\\
     &= \langle(-\Delta)^{s/2}(\gamma_2^{1/2}u_f^2),(-\Delta)^{s/2}(\gamma_2^{1/2}\phi)\rangle_{L^2(\R^n)}.
\end{split}
\end{equation}
Since $\gamma_1^{1/2}|_{W_2}=\gamma_2^{1/2}|_{W_2}$, we have by approximation $\gamma_1^{1/2}\phi=\gamma_2^{1/2}\phi$ in $\R^n$ and obtain
\begin{equation}
     \langle(-\Delta)^{s/2}(\gamma_1^{1/2}u_f^1-\gamma_2^{1/2}u_f^2),(-\Delta)^{s/2}(\gamma_1^{1/2}\phi)\rangle_{L^2(\R^n)}=0.
\end{equation}
By choosing $\phi = \gamma_{1}^{-1/2}g \in \widetilde{H}^s(V)$, we get
\begin{equation}
     \langle(-\Delta)^{s/2}(\gamma_1^{1/2}u_f^1-\gamma_2^{1/2}u_f^2),(-\Delta)^{s/2}g\rangle_{L^2(\R^n)}=0.
\end{equation}
for any $g \in \widetilde{H}^s(V)$. Since $\gamma_1^{1/2}u_f^1-\gamma_2^{1/2}u_f^2 \in H^s(\R^n)$ and $\gamma_1^{1/2}u_f^1-\gamma_2^{1/2}u_f^2=0$ in $V$, the UCP of the fractional Laplacian gives $\gamma_1^{1/2}u_f^1=\gamma_2^{1/2}u_f^2$ in $\R^n$.
\end{proof}

\begin{remark} Suppose that $\Omega, \gamma_1,\gamma_2$ satisfy the assumptions of Lemma \ref{lemma: relation for solutions} and $W_1,W_2 \subset \Omega_e$ are nonempty open sets. Suppose that the following holds: For all $V \Subset W_1$ it holds that $W_2 \setminus V \neq \emptyset$. Then $\Lambda_{\gamma_1}f|_{W_2} = \Lambda_{\gamma_2}f|_{W_2}$ for all $f \in C_c^\infty(W_1)$ implies that $\gamma_1^{1/2}u_g^1 = \gamma_2^{1/2}u_g^2$ a.e. in $\R^n$ for any $g \in \widetilde{H}^s(W_1)$. This follows by approximation and Lemma \ref{lemma: relation for solutions}:
We have by the linearity of the fractional conductivity operator and the Lax--Milgram theorem that the solutions depend continuously on the exterior conditions in $H^s(\R^n)$. Therefore, the general case $g \in \widetilde{H}^s(W_1)$ follows by taking a sequence $(f_j)_{j \in \N} \subset C_c^\infty(W_1)$ such that $f_j \to g$ in $H^s(\R^n)$. Now there holds $\gamma_1^{1/2}u_{f_j}^1=\gamma_2^{1/2}u_{f_j}^2$ a.e. in $\R^n$ for all $j \in \N$ by Lemma \ref{lemma: relation for solutions}. Hence, the identity $\gamma_1^{1/2}u_{g}^1=\gamma_2^{1/2}u_{g}^2$ also holds a.e. in $\R^n$ by taking the limit $j \to \infty$ of a suitable subsequence. One can for example take $W := W_1 =W_2$.
\end{remark}

\begin{proof}[Second proof of Theorem \ref{thm: Global uniqueness}]
We have that $\gamma_1|_W=\gamma_2|_W$ a.e. by exterior determination (Theorem \ref{thm: exterior determination}). By the Alessandrini identity \cite[Lemma 4.4]{RZ2022unboundedFracCald} and the symmetry of the bilinear forms, there holds
\begin{equation}\label{eq: Alessandrini}
    0=\langle(\Lambda_{\gamma_1}-\Lambda_{\gamma_2})f,g\rangle = (B_{\gamma_1}-B_{\gamma_2})(u_f^1,u_g^2)
\end{equation}
for any $f,g \in X$. Let now $f=g \in \widetilde{H}^s(V)$ for some $V\Subset W$. By the Liouville reduction (Lemma \ref{lemma: Liouville reduction}), \eqref{eq: Alessandrini}, and $\gamma_1|_W = \gamma_2|_W$, we obtain
\[
    \begin{split}
        0=\,&\langle(\Lambda_{\gamma_1}-\Lambda_{\gamma_2})f,g\rangle=(B_{\gamma_1}-B_{\gamma_2})(u_f^1,u_f^2)\\
        =\,& \langle (-\Delta)^{s/2}(\gamma_1^{1/2}u_f^1),(-\Delta)^{s/2}(\gamma_1^{1/2}f)\rangle_{L^2(\R^n)}\\
        &-\langle (-\Delta)^{s/2}m_1,(-\Delta)^{s/2}(\gamma_1^{1/2}f^2 )\rangle_{L^2(\R^n)}\\
        &-\langle (-\Delta)^{s/2}(\gamma_2^{1/2}u_f^2),(-\Delta)^{s/2}(\gamma_2^{1/2}f)\rangle_{L^2(\R^n)}\\
        &+\langle (-\Delta)^{s/2}m_2,(-\Delta)^{s/2}(\gamma_2^{1/2}f^2 )\rangle_{L^2(\R^n)}\\
        =\,&\langle (-\Delta)^{s/2}(\gamma_1^{1/2}u_f^1-\gamma_2^{1/2}u_f^2),(-\Delta)^{s/2}(\gamma_1^{1/2}f)\rangle_{L^2(\R^n)}\\
        &-\langle (-\Delta)^{s/2}(m_1-m_2),(-\Delta)^{s/2}(\gamma_1^{1/2}f^2 )\rangle_{L^2(\R^n)}.
    \end{split}
\]
Now the conclusion \eqref{eq: relation for solutions} of Lemma \ref{lemma: relation for solutions} (with $W_1=V,W_2=W$) implies that $\gamma_1^{1/2}u_f^1-\gamma_2^{1/2}u_f^2=0$, and hence
\begin{equation}\label{eq: key equation for alternative proof}
    \langle (-\Delta)^{s/2}(m_1-m_2),(-\Delta)^{s/2}(\gamma_1^{1/2}f^2 )\rangle_{L^2(\R^n)}=0
\end{equation}
for any $f \in \widetilde{H}^s(V)$.

Let $f \in \widetilde{H}^s(U)$ where $U \Subset V$ is a nonempty open set, $\phi|_{\bar{U}}=1$, $\phi \in C_c^\infty(V)$, and define $g \vcentcolon= \phi-f \in \widetilde{H}^s(V)$. Now $f^2-g^2=(f+g)(f-g)=\phi(2f-\phi)=2f -\phi^2$. We can use the identity \eqref{eq: key equation for alternative proof} to compute 
\[
\begin{split}
    0&=\langle (-\Delta)^{s/2}(m_1-m_2),(-\Delta)^{s/2}(\gamma_1^{1/2}(f^2-g^2+\phi^2))\rangle_{L^2(\R^n)}\\ 
    &= \langle (-\Delta)^{s/2}(m_1-m_2),(-\Delta)^{s/2}(\gamma_1^{1/2}(2f))\rangle_{L^2(\R^n)}.
\end{split}
\]
This ensures
\begin{equation}\label{eq: UCP formula}
    0=\langle (-\Delta)^{s/2}(m_1-m_2),(-\Delta)^{s/2}(\gamma_1^{1/2}f)\rangle_{L^2(\R^n)}
\end{equation}
for all $f \in \widetilde{H}^s(U)$. By taking $f = \gamma_1^{-1/2}h \in \widetilde{H}^s(U)$, we obtain that
\begin{equation}
    0=\langle (-\Delta)^{s/2}(m_1-m_2),(-\Delta)^{s/2}h\rangle_{L^2(\R^n)}
\end{equation}
for all $h \in \widetilde{H}^s(U)$. Now since $m_1|_{U} = m_2|_{U}$, the UCP implies that $m_1=m_2$ and eventually $\gamma_1=\gamma_2$ in $\R^n$.
\end{proof}

\section{Construction of counterexamples}
\label{sec: counterexamples}

We construct counterexamples to the inverse fractional conductivity equation in this section. These counterexamples are constructed following the strategy introduced in \cite{RZ2022counterexamples}. In bounded domains, these counterexamples are known to exist in the generality presented here (see \cite[Theorem 1.2]{RZ2022counterexamples}). In the case of domains that are bounded in one direction, counterexamples were also constructed when $0 < s < \min(1,n/4]$ for $n \geq 2$ \cite[Theorem 1.3]{RZ2022counterexamples}. The integrability conditions were the main obstruction in the earlier work to construct these counterexamples in general. In the present work, we have changed the integrability requirements from $L^{n/2s}$ to $L^{n/s}$, which allow us to construct counterexamples also in the cases $n/4 < s < 1$ when $n=2,3$, missing in the earlier literature.

We will need Lemma \ref{lemma: ucp of dn map} to verify that the constructed counterexamples have the desired properties. Lemma \ref{lemma: ucp of dn map} is an alternative to \cite[Theorem 1.1 (ii)]{RGZ2022GlobalUniqueness}, which states a similar result with the $H^{2s,\frac{n}{2s}}$ regularity assumption in the place of $H^{s,n/s}$. 

\begin{lemma}[Invariance of data]
\label{lemma: ucp of dn map}
    Let $\Omega\subset \R^n$ be an open set which is bounded in one direction and $0<s<\min(1,n/2)$. Assume that $\gamma_1,\gamma_2\in L^{\infty}(\R^n)$ with background deviations $m_1,m_2$ satisfy $\gamma_1(x),\gamma_2(x)\geq \gamma_0>0$ and $m_1,m_2\in H^{s,n/s}(\R^n) \cap H^s(\R^n)$. Finally, assume that $W_1,W_2 \subset \Omega_e$ are nonempty disjoint open sets and that  $\gamma_1|_{W_1\cup W_2} = \gamma_2|_{W_1\cup W_2}$ holds. Then there holds $\left.\Lambda_{\gamma_1}f\right|_{W_2}=\left.\Lambda_{\gamma_2}f\right|_{W_2}$ for all $f\in C_c^{\infty}(W_1)$ if and only if $m_0 := m_1-m_2 \in H^s(\R^n)$ is the unique solution of
    \begin{equation}
    \label{eq: PDE for m statement}
    \begin{split}
        (-\Delta)^sm+q_{\gamma_2}m&=0\quad\quad\text{in}\quad\Omega,\\
        m&=m_0 \quad\,\text{in}\quad\Omega_e.
    \end{split}
    \end{equation}
\end{lemma}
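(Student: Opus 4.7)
The plan is to transplant the problem from the fractional conductivity setting to the fractional Schr\"odinger setting via the Liouville reduction, and then to recognize \eqref{eq: PDE for m statement} as a repackaging of the identity $q_{\gamma_1}=q_{\gamma_2}$ in $\Omega$; the equivalence then follows from the inverse theory for the fractional Schr\"odinger equation with disjoint exterior measurements.

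First I would exploit $\gamma_1|_{W_1\cup W_2}=\gamma_2|_{W_1\cup W_2}$ and apply Lemma~\ref{lemma: Relation DN maps} with $\Gamma=\gamma_1$ on $W=W_1\cup W_2$ to obtain
\[
\langle \Lambda_{\gamma_i}f,g\rangle=\langle \Lambda_{q_i}(\gamma_1^{1/2}f),\gamma_1^{1/2}g\rangle,\qquad i=1,2,
\]
for all $f\in C_c^\infty(W_1)$, $g\in C_c^\infty(W_2)$. Since $f\mapsto \gamma_1^{1/2}f$ is an isomorphism on $\widetilde{H}^s(W_j)$ (Corollary~\ref{corollary: regularity exterior conditions}) and $C_c^\infty(W_j)$ is dense in $\widetilde{H}^s(W_j)$, the DN-map hypothesis becomes
\[
\langle \Lambda_{q_1}F,G\rangle=\langle \Lambda_{q_2}F,G\rangle\quad\text{for all }F\in\widetilde{H}^s(W_1),\,G\in\widetilde{H}^s(W_2).
\]
A standard Alessandrini--Runge--UCP argument for the fractional Schr\"odinger equation --- using Theorem~\ref{thm:UCP-general} together with the Runge approximation property and the uniqueness results from \cite[Remark~4.2 and Theorem~4.3]{RZ2022unboundedFracCald} --- then yields the equivalence with $q_1=q_2$ in $\Omega$. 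The converse direction is direct: $q_1=q_2$ in $\Omega$ forces $v_F^1=v_F^2$ by Schr\"odinger well-posedness, and the Alessandrini pairing $\langle (q_1-q_2)v_F^1,G\rangle=\langle q_1-q_2,v_F^1 G\rangle$ vanishes because $v_F^1 G\equiv 0$ (as $v_F^1=F$ vanishes on $W_2\supset \supp G$).

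It remains to show that $q_1=q_2$ in $\Omega$ is equivalent to $m_0$ being the unique solution of \eqref{eq: PDE for m statement}. For this I would establish the algebraic identity
\[
B_{q_2}(m_0,\phi)=\langle (q_2-q_1)\gamma_1^{1/2},\phi\rangle\quad\text{for all }\phi\in\widetilde{H}^s(\Omega),
\]
by expanding $B_{q_2}(m_0,\phi)=B_{q_2}(m_1,\phi)-B_{q_2}(m_2,\phi)$ and invoking the relation $B_{q_i}(m_i,\phi)=-\langle q_i,\phi\rangle$, which is a direct consequence of the definition of $q_i$ in Lemma~\ref{lemma: sobolev multiplier property}. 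Since $\phi\mapsto \gamma_1^{1/2}\phi$ is a bijection on $\widetilde{H}^s(\Omega)$ (Corollary~\ref{corollary: regularity exterior conditions}), the vanishing of $B_{q_2}(m_0,\phi)$ for every test function is equivalent to $q_1=q_2$ in $\Omega$ as elements of $(\widetilde{H}^s(\Omega))^*$. Uniqueness of $m_0$ as the solution of \eqref{eq: PDE for m statement} then follows from the Schr\"odinger well-posedness (Lemma~\ref{eq: well-posedness results and DN maps}).

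The hard part will be the rigorous handling of the Sobolev multiplier pairings involving the factor $\gamma_i^{1/2}$, which lies in $L^\infty$ but not in $H^s$. Expressions such as $\langle (q_2-q_1)\gamma_1^{1/2},\phi\rangle$ and $\langle q_i,m_i\phi\rangle$ must be reduced to pairings of $H^s$-functions via the decomposition $\gamma_i^{1/2}=1+m_i$, the multiplication estimates of Lemma~\ref{lemma: multiplication estimates}, and the defining formula for $q_i$ from Lemma~\ref{lemma: sobolev multiplier property}.
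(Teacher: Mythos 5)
Your proposal is correct and follows the same route as the paper: reduce to equality of the Schr\"odinger DN maps via Lemma~\ref{lemma: Relation DN maps}, identify that with $q_1=q_2$ on $\Omega$ (forward direction by the Runge/UCP machinery from \cite{RZ2022unboundedFracCald}; converse by Schr\"odinger well-posedness together with the disjoint-support observation $v_F^1 G\equiv 0$), and then translate $q_1=q_2$ into the exterior problem \eqref{eq: PDE for m statement}. The only stylistic difference is in the final algebraic step: the paper establishes $q_1=q_2 \Leftrightarrow B_{q_2}(m_0,\cdot)=0$ through the direct calculation \eqref{eq:computation for m} followed by the substitution $w=\gamma_1^{1/2}\phi$, while you derive it from the identity $B_{q_2}(m_0,\phi)=\langle(q_2-q_1)\gamma_1^{1/2},\phi\rangle$, which rests on $B_{q_i}(m_i,\phi)=-\langle q_i,\phi\rangle$, i.e., on the observation that $\gamma_i^{1/2}$ is a global weak solution of $((-\Delta)^s+q_i)v=0$ (the Liouville reduction of $u\equiv 1$). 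That is a clean conceptual repackaging rather than a genuinely different argument, and it is algebraically equivalent to the paper's computation. Do be a little careful with the technicality you flag: the decomposition $\gamma_i^{1/2}=1+m_i$ does not by itself turn $\langle q_2-q_1,\phi\rangle$ into a pairing of $H^s$ functions (since $1\notin H^s(\R^n)$); the fix, effectively what the paper does by testing with $v,w\in C_c^\infty(\Omega)$, is to take $\phi\in C_c^\infty(\Omega)$ and a cutoff $v\in C_c^\infty(\Omega)$ with $v\equiv 1$ near $\supp\phi$, so that $\gamma_1^{1/2}$ may be replaced by $v\gamma_1^{1/2}=v+vm_1\in\widetilde{H}^s(\Omega)$ without changing the multiplier pairing.
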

\begin{proof}
As in the first proof of Theorem~\ref{thm: Global uniqueness}, we may conclude that $\left.\Lambda_{\gamma_1}f\right|_{W_2}=\left.\Lambda_{\gamma_2}f\right|_{W_2}$ for all $f\in C_c^{\infty}(W_1)$ if and only if
$\left.\Lambda_{q_1}f\right|_{W_2}=\left.\Lambda_{q_2}f\right|_{W_2}$ for all $f\in C_c^{\infty}(W_1)$. 

Suppose first that $\left.\Lambda_{\gamma_1}f\right|_{W_2}=\left.\Lambda_{\gamma_2}f\right|_{W_2}$. It follows from the results in \cite{RZ2022unboundedFracCald} that $q_1=q_2$ in $\widetilde{H}^s(\Omega)\times \widetilde{H}^s(\Omega)$. Next note that $q_1=q_2$ in $\widetilde{H}^s(\Omega)\times \widetilde{H}^s(\Omega)$ is equivalent to
    \begin{equation}
    \label{eq: relation m1 and m2}
        \langle (-\Delta)^{s/2}m_1,(-\Delta)^{s/2}(vw/\gamma_1^{1/2})\rangle= \langle (-\Delta)^{s/2}m_2,(-\Delta)^{s/2}(vw/\gamma_2^{1/2})\rangle
    \end{equation}
    for all $v,w\in\widetilde{H}^s(\Omega)$. This implies for $m=m_1-m_2\in H^{s,n/s}(\R^n)\cap H^s(\R^n)$ that
    \begin{equation}\label{eq:computation for m}
    \begin{split}
        &\langle (-\Delta)^{s/2}m,(-\Delta)^{s/2}(vw/\gamma_1^{1/2})\rangle\\
        &=\langle (-\Delta)^{s/2}m_1,(-\Delta)^{s/2}(vw/\gamma_1^{1/2})\rangle-\langle (-\Delta)^{s/2}m_2,(-\Delta)^{s/2}(vw/\gamma_1^{1/2})\rangle\\
        &=\langle (-\Delta)^{s/2}m_2,(-\Delta)^{s/2}(vw(1/\gamma_2^{1/2}-1/\gamma_1^{1/2}))\rangle\\
        &=\langle (-\Delta)^{s/2}m_2,(-\Delta)^{s/2}(vw\frac{\gamma_1^{1/2}-\gamma_2^{1/2}}{\gamma_1^{1/2}\gamma_2^{1/2}})\rangle\\
        &=\langle (-\Delta)^{s/2}m_2,(-\Delta)^{s/2}(vw\frac{m}{\gamma_1^{1/2}\gamma_2^{1/2}})\rangle
    \end{split}
    \end{equation}
    for all $v,w\in\widetilde{H}^s(\Omega)$. Let $w=\gamma_1^{1/2}\phi$, then we obtain
    \[
        \langle (-\Delta)^{s/2}m,(-\Delta)^{s/2}(v\phi)\rangle-\langle (-\Delta)^{s/2}m_2,(-\Delta)^{s/2}(v\phi\frac{m}{\gamma_2^{1/2}})\rangle=0
    \]
    for all $v,\phi\in\widetilde{H}^s(\Omega)$. 
    
    Next let $\phi\in C_c^{\infty}(\Omega)$ and choose $v\in C_c^{\infty}(\Omega)$ such that $v|_{\supp(\phi)}=1$. Then $m$ solves
    \begin{equation}
    \label{eq: PDE for m}
        (-\Delta)^sm+q_{\gamma_2}m=0
    \end{equation}
    in the sense of distributions on $\Omega$, $m \in H^s(\R^n)$ and $m=m_0$ in $\Omega_e$. Since \eqref{eq: PDE for m statement} has a unique solution in $H^s(\R^n)$ by Lemma \ref{eq: well-posedness results and DN maps}, this shows the first direction.
    
    To see the converse, suppose that $m=m_1-m_2$ is the unique solution of \eqref{eq: PDE for m statement}. Now
    \[
        \langle (-\Delta)^{s/2}m,(-\Delta)^{s/2}\phi\rangle-\langle (-\Delta)^{s/2}m_2,(-\Delta)^{s/2}(\phi\frac{m}{\gamma_2^{1/2}})\rangle=0
    \]
for all $\phi \in C_c^\infty(\Omega)$. We may use the computation \eqref{eq:computation for m} with $\phi=vw$, $v,w \in C_c^\infty(\Omega)$ and suitable approximation, to obtain that
\begin{equation}
        \langle (-\Delta)^{s/2}m_1,(-\Delta)^{s/2}(vw/\gamma_1^{1/2})\rangle= \langle (-\Delta)^{s/2}m_2,(-\Delta)^{s/2}(vw/\gamma_2^{1/2})\rangle.
    \end{equation}
This clearly implies that $q_1 = q_2$ in $\widetilde{H}^s(\Omega) \times \widetilde{H}^s(\Omega)$ since $q_1,q_2 \in M(H^s \to H^{-s})$. Therefore, it holds that $u_f^1 = u_f^2$ for any $f \in C_c^\infty(W_1)$ since $(-\Delta)^s+q_1=(-\Delta)^s+q_2$ agree in the weak sense in $\Omega$ and $B_{q_1}(f,\phi)=B_{q_2}(f,\phi)$ for any $\phi \in C_c^\infty(\Omega)$. In particular,
\begin{equation}
    \ip{\Lambda_{q_1}f}{g}=B_{q_1}(u_f^1,g)=B_{q_2}(u_f^2,g)=\ip{\Lambda_{q_2}f}{g}
\end{equation}
for all $f\in C_c^{\infty}(W_1)$, $g \in C_c^\infty(W_2)$ by the support conditions. It follows that
$\left.\Lambda_{q_1}f\right|_{W_2}=\left.\Lambda_{q_2}f\right|_{W_2}$ for all $f \in C_c^\infty(W_1)$, and hence $\left.\Lambda_{\gamma_1}f\right|_{W_2}=\left.\Lambda_{\gamma_2}f\right|_{W_2}$ for all $f\in C_c^{\infty}(W_1)$ as desired.
\end{proof}

We can now prove Theorem \ref{thm: counterexample}. The proof given here follows the one for \cite[Theorem 1.3]{RZ2022counterexamples} with minor changes that are possible due to Lemma \ref{lemma: ucp of dn map}.

\begin{proof}[Proof of Theorem \ref{thm: counterexample}] 
    For any $\delta>0$, we denote by $A_{\delta}$ the open $\delta$-neighbor-hood of the set $A\subset\R^n$ (this should not be confused with the notation $\Omega_e$ for the exterior) and by $(\rho_{\epsilon})_{\epsilon>0}\subset C_c^{\infty}(\R^n)$ the standard mollifiers. First assume that the conductivities $\gamma_1,\gamma_2\in L^{\infty}(\R^n)$ with background deviations $m_1,m_2\in H^{s,n/s}(\R^n)\cap H^s(\R^n)$ satisfy the assumptions of Lemma~\ref{lemma: ucp of dn map} for some $\gamma_0>0$ and $m\vcentcolon =m_1-m_2\in H^s(\R^n)$ solves \eqref{eq: PDE for m statement} with exterior value $m_0\in H^s(\R^n)$. Then by the computation in \cite[Proof of Theorem 1.3]{RZ2022counterexamples} we know that $m_1\in H^s(\R^n)$ solves 
    \[
    (-\Delta)^sm_1-\frac{(-\Delta)^sm_2}{\gamma_2^{1/2}}m_1=\frac{(-\Delta)^sm_2}{\gamma_2^{1/2}}\quad \text{in}\quad \Omega,\quad m_1=m_2+m_0\quad\text{in}\quad \Omega_e.
    \]
    Now let $\gamma_2\equiv 1$. Then $m_1\in H^s(\R^n)$ is a $s$-harmonic function in $\Omega$; more precisely $m_1$ solves
    \begin{equation}
    \label{eq: constr m}
        (-\Delta)^sm_1=0\quad \text{in}\quad \Omega,\quad m_1=m_0\quad\text{in}\quad \Omega_e.
    \end{equation}
    
    Now choose $\omega\Subset \Omega_e\setminus\overline{W_1\cup W_2}$ and let $\epsilon>0$ be such that the sets $\Omega_{5\epsilon},\omega_{5\epsilon}\subset \R^n\setminus (W_1\cup W_2)$ are disjoint. Let $\eta\in C_c^{\infty}(\omega_{3\epsilon})$ be a nonnegative cutoff function satisfying $\eta|_{\overline{\omega}_{2\epsilon}}=1$. By the Lax--Milgram theorem and the fractional Poincar\'e inquality on domains bounded in one direction (cf.~\cite[Theorem 2.2]{RZ2022unboundedFracCald}) there is a unique solution $\widetilde{m}_1\in H^s(\R^n)$ to
    \begin{equation}
    \label{eq: PDE in extended domain}
        (-\Delta)^s\widetilde{m}_1=0\quad \text{in}\quad \Omega_{2\epsilon},\quad \widetilde{m}_1=\eta\quad\text{in}\quad \R^n\setminus\overline{\Omega}_{2\epsilon}.
    \end{equation}
    Proceeding as in \cite[Proof of Theorem 1.2 and 1.3]{RZ2022unboundedFracCald} one can show that
    \[
        m_1\vcentcolon =C_{\epsilon}\rho_{\epsilon}\ast\widetilde{m}_1\in H^s(\R^n)\quad\text{with}\quad C_{\epsilon}\vcentcolon=\frac{\epsilon^{n/2}}{2|B_1|^{1/2}\|\rho\|_{L^{\infty}(\R^n)}^{1/2}\|\widetilde{m}_1\|_{L^2(\R^n)}}
    \]
    solves
    \[
        (-\Delta)^sm=0\quad \text{in}\quad \Omega,\quad m=m_1\quad\text{in}\quad \Omega_e.
    \]
    Moreover, $m_1$ has the following properties
\begin{enumerate}[(i)]
    \item\label{item 2 m} $m_1\in L^{\infty}(\R^n)$ with $\|m_1\|_{L^{\infty}(\R^n)}\leq 1/2$,
    \item\label{item 3 m} $m_1\in H^{s}(\R^n)\cap H^{s,n/s}(\R^n)$
    \item\label{item 4 m} and $\supp(m_1)\subset \R^n\setminus(W_1\cup W_2)$.
\end{enumerate}
The statement \ref{item 3 m} follows from Young's inequality since $n/s>2$ implies $p\vcentcolon =\frac{2n}{n+2s}\in (1,\frac{n}{2s})$ and $1/p+1/2=1+s/n$. Moreover, the support conditions imply that $\gamma_1=1$ in $W_1\cup W_2$.

This ensures that the conductivity $\gamma_1$ defined by $\gamma_1^{1/2}\vcentcolon = m_1+1$ and the background deviation $m_1$ satisfy all required properties but $\gamma_1\neq \gamma_2$. Now since $W_1,W_2\subset \Omega_e$ are two disjoint open sets, we have found two conductivities $\gamma_1,\gamma_2$ satisfying the properties of Lemma~\ref{lemma: ucp of dn map} and $m\vcentcolon = m_1-m_2$ is the unique solution to \eqref{eq: PDE for m statement}, which in turn implies that the induced DN maps fulfill $\Lambda_{\gamma_1}f|_{W_2}=\Lambda_{\gamma_2}f|_{W_2}$ for all $f\in C_c^{\infty}(W_1)$.
\end{proof}

\bibliography{Refs} 

\bibliographystyle{alpha}

\end{document}